\pdfobjcompresslevel 0
\documentclass[a4paper,reqno]{amsart}

\usepackage{ifxetex,ifluatex}
\newif\ifxetexorluatex
\ifxetex
  \xetexorluatextrue
\else
  \ifluatex
    \xetexorluatextrue
  \else
    \xetexorluatexfalse
  \fi
\fi

\usepackage{lmodern}
\usepackage{xargs} 

\ifxetexorluatex
	\usepackage{polyglossia}
	\setmainlanguage{english}

	\usepackage{fontspec}
	\usepackage{amsmath} 
	\usepackage{amsthm}
	\usepackage{mathtools,thmtools} 
	\usepackage[mathrm=sym,mathbf=sym]{unicode-math}


	\ifluatex

	\setmathfont{Latin Modern Math}
	\setmathfont[range=\setminus]{XITS Math} 

	\DeclareMathAlphabet{\mathcal}{OMS}{cmsy}{m}{n}

	\setmathfont[range={\mathscr,\mathbfscr,\setminus}]{Asana Math}

	\newcommandx{\forcebold}[2][1=0.2]{{\pdfliteral direct {2 Tr #1 w}#2\pdfliteral direct {0 Tr 0 w}}}
	
	\else
	\setmathfont{Latin Modern Math}[AutoFakeBold]
	\setmathfont[range=\setminus,AutoFakeBold]{Asana Math}
	\setmathfont[range={\mathscr,\mathbfscr,\setminus},AutoFakeBold]{Asana Math}

	\DeclareMathAlphabet{\mathcal}{OMS}{cmsy}{m}{n}

	\newcommandx{\forcebold}[2][1=0.2]{{\boldmath #2}}
	\fi

	\setmathfont[range={\mathcal,\mathbfcal},StylisticSet=1]{XITS Math}

	\renewcommand\boldsymbol\mathbf

	\setmathfont[range={\mathbb}]{TeX Gyre Termes Math}
	\setmathfont[range={\mathbb}]{XITS Math}

\else
	\pdfoutput=1
	\usepackage[english]{babel}

	\usepackage[T1]{fontenc}
	\usepackage[utf8]{inputenc}
	\usepackage[final]{microtype}
	\usepackage{amsmath}
	\usepackage{amsthm}
	\usepackage{amssymb}
	\usepackage{mathtools,thmtools} 


	\usepackage{bm}
	\renewcommand{\mathbf}[1]{\bm{\mathrm{#1}}}
	
	\providecommand{\mathbfscr}[1]{\bm{\mathscr{#1}}}
	\providecommand{\mathbfcal}[1]{\bm{\mathcal{#1}}}
	\newcommandx{\forcebold}[2][1=0.2]{{\bm{#2}}}

	\usepackage{scalerel,stackengine}
	\stackMath
	\newcommand\widecheck[1]{%
	\savestack{\tmpbox}{\stretchto{%
	  \scaleto{%
		\scalerel*[\widthof{\ensuremath{#1}}]{\kern-.6pt\bigwedge\kern-.6pt}%
		{\rule[-\textheight/2]{1ex}{\textheight}}
	  }{\textheight}%
	}{0.5ex}}%
	\stackon[1pt]{#1}{\scalebox{-1}{\tmpbox}}%
	}

	\usepackage{mathrsfs,esint}

	\DeclareUnicodeCharacter{211D}{$\mathbb{R}$}
	\DeclareUnicodeCharacter{1D54A}{$\mathbb{S}$}
\fi


\usepackage[top=3cm, bottom=3cm, inner=2.7cm, outer=2.7cm,headsep=0.75cm,footskip=1.25cm]{geometry}


\usepackage{csquotes,enumitem,multicol}


\usepackage{hyperref}
\hypersetup{unicode,hypertexnames=false,colorlinks=true,linkcolor=blue,citecolor=blue,pdfauthor={B. Merlet, M. Pegon},pdftitle={Large mass rigidity for a liquid drop model in 2D with kernels of finite moments}}
\usepackage[capitalize,nameinlink,noabbrev]{cleveref}

\usepackage[hang,flushmargin]{footmisc}

\usepackage[final,color]{showkeys}
\renewcommand*\showkeyslabelformat[1]{
  \fbox{\parbox[t]{\marginparwidth}{\raggedright\normalfont\path{#1}}}}

\usepackage[backend=biber,style=numeric,isbn=false,alldates=year,url=false,doi=false,eprint=false,giveninits=true,maxbibnames=50]{biblatex}
\renewbibmacro{in:}{} 
\AtEveryBibitem{\clearfield{note}} 
\AtEveryBibitem{\clearlist{location}} 
\AtEveryBibitem{\clearfield{urlyear}} 
\AtEveryBibitem{\clearfield{pagetotal}} 


\renewbibmacro{volume+number+eid}{%
    \textbf{
	\printfield{volume}%
    \setunit{\adddot}%
    \printfield{number}%
    \setunit{\addcomma\space}%
    \printfield{eid}}
}
\AtBeginBibliography{%

}

\DeclareFieldFormat
  [article,inproceedings,incollection,report,unpublished,online]
  {title}{#1\addcomma}
\DeclareFieldFormat
  [book,inbook,incollection,unpublished]
  {date}{(#1)}
\DeclareFieldFormat
  [book,inbook]
  {title}{\textit{#1}\addcomma}
\DeclareFieldFormat
  [article,inbook,book,incollection,inproceedings,report,unpublished,online]
  {pages}{pp. #1}

\DeclareDelimFormat[bib]{nametitledelim}{\space:\space}
\DeclareNameAlias{labelname}{given-family} 

\addbibresource{bib_2d.bib}
\usepackage{xifthen}
\usepackage[dvipsnames]{xcolor}
\makeatletter \def\SK@refcolor{\color{OrangeRed}} \makeatother
\makeatletter \def\SK@labelcolor{\color{Aquamarine}} \makeatother

\usepackage{graphicx}
\usepackage[mode=buildnew]{standalone}
\usepackage{tikz}
\usepackage{appendix}
%


\usepackage[colorinlistoftodos,textsize=small,disable=true]{todonotes}
\setlength{\marginparwidth}{2cm}
\newcommandx{\unsure}[2][1=]{\todo[linecolor=red,backgroundcolor=red!25,bordercolor=red,#1]{#2}}
\newcommandx{\change}[2][1=]{\todo[linecolor=blue,backgroundcolor=blue!25,bordercolor=blue,#1]{#2}}
\newcommandx{\info}[2][1=]{\todo[linecolor=OliveGreen,backgroundcolor=OliveGreen!25,bordercolor=OliveGreen,#1]{#2}}
\newcommandx{\improvement}[2][1=]{\todo[linecolor=Plum,backgroundcolor=Plum!25,bordercolor=Plum,#1]{#2}}

\newcommand{\nhphantom}[1]{\sbox0{#1}\hspace{-\the\wd0}}

\newcommand{\numberset}[1]{{\mathbb{#1}}}
\newcommand{\N}{\numberset{N}}

\newcommand{\R}{\numberset{R}}


\newcommand{\dd}{\mathop{}\mathopen{}\mathrm{d}} 
\newcommand{\da}{\mathop{}\mathopen{}\mathrm{d}a} 
\newcommand{\db}{\mathop{}\mathopen{}\mathrm{d}b} 
\newcommand{\dx}{\mathop{}\mathopen{}\mathrm{d}x} 
\newcommand{\dy}{\mathop{}\mathopen{}\mathrm{d}y} 
\newcommand{\dz}{\mathop{}\mathopen{}\mathrm{d}z} 
\newcommand{\dt}{\mathop{}\mathopen{}\mathrm{d}t} 
\newcommand{\dr}{\mathop{}\mathopen{}\mathrm{d}r} 
\newcommand{\ds}{\mathop{}\mathopen{}\mathrm{d}s}

\newcommand{\dH}{\mathop{}\mathopen{}\mathrm{d}\mathscr{H}}

\newcommand{\bbS}{\mathbb{S}}


\newcommand{\bfK}{\mathbf{K}}


\newcommand{\calE}{{\mathcal{E}}}
\newcommand{\calF}{{\mathcal{F}}}

\newcommand{\calH}{{\mathscr{H}}}
\newcommand{\calS}{{\mathcal{S}}}

\newcommand{\calL}{{\mathscr{L}}}


\newcommand{\co}{{\mathrm{co}}}
\newcommand{\ind}{{\mathbf{1}}}
\newcommand{\Ren}{{\R^n}} 

\newcommand{\om}{\omega}
\newcommand{\eps}{\varepsilon}
\newcommand{\vphi}{\varphi}

\newcommand{\ol}{\overline}
\newcommand{\wt}{\widetilde}

\newcommand{\bv}{\mathrm{BV}}

\newcommand{\loc}{{\mathrm{loc}}}

\newcommand{\id}{{\mathrm{Id}}}

\newcommand{\compl}{{\mathrm{c}}}

\newcommand{\Lip}{{\mathrm{Lip}}}

\newcommand{\grad}{\nabla}
\newcommand{\subsq}{\subseteq}



\newcommand{\bmass}[2][]{\lbrack B\rbrack_{\ifthenelse{\isempty{#1}}{}{#1,}#2}}


\newcommand{\noblue}[1]{{#1}}

\newcommand{\edit}[1]{#1}

\DeclareMathOperator{\Per}{\mathit{P}}

\DeclareMathOperator{\spt}{\mathrm{spt}}

\DeclarePairedDelimiter{\abs}{\lvert}{\rvert}

\DeclarePairedDelimiter{\norm}{\lVert}{\rVert}
\DeclarePairedDelimiter{\seminorm}{\lbrack}{\rbrack}


\numberwithin{equation}{section} 

\declaretheorem[name=Theorem]{mainthm}
\declaretheorem[name=Theorem,numberwithin=section]{mainres}

\declaretheorem[name=Definition]{maindfn}
\declaretheorem[name=Theorem,numberwithin=section]{thm}
\declaretheorem[name=Lemma,numberlike=thm]{lem}
\declaretheorem[name=Proposition,numberlike=thm]{prp}
\declaretheorem[name=Corollary,numberlike=thm]{cor}
\declaretheorem[name=Definition,numberlike=thm,style=definition]{dfn}
\declaretheorem[name=Remark,numberlike=thm,style=remark]{rmk}

\crefname{equation}{}{}
\crefname{enumi}{}{}
\crefname{thm}{Theorem}{Theorems}
\crefname{mainthm}{Theorem}{Theorems}
\crefname{mainres}{Theorem}{Theorems}
\crefname{maindfn}{Definition}{Definitions}
\crefname{lem}{Lemma}{Lemmas}
\crefname{mainapp}{Application}{Applications}
\crefname{prp}{Proposition}{Propositions}
\crefname{cor}{Corollary}{Corollaries}
\crefname{dfn}{Definition}{Definitions}
\crefname{rmk}{Remark}{Remarks}
\crefname{conj}{Conjecture}{Conjectures}
\crefname{ex}{Example}{Examples}
\crefname{appsec}{Appendix}{Appendices}


\def\Xint#1{\mathchoice
{\XXint\displaystyle\textstyle{#1}}%
{\XXint\textstyle\scriptstyle{#1}}%
{\XXint\scriptstyle\scriptscriptstyle{#1}}%
{\XXint\scriptscriptstyle
\scriptscriptstyle{#1}}%
\!\int}
\def\XXint#1#2#3{{%
\setbox0=\hbox{$#1{#2#3}{\int}$}
\vcenter{\hbox{$#2#3$}}\kern-.5\wd0}}

\def\dashint{\Xint-}

\renewcommand{\leq}{\leqslant}
\renewcommand{\geq}{\geqslant}

\title[Large mass rigidity for a liquid drop model in 2D]{\textbf{Large mass rigidity for a liquid drop model in 2D with kernels of finite moments}}
\author{Benoit Merlet}
\author{Marc Pegon}
\address{Benoit Merlet, Univ. Lille, CNRS, Inria, UMR 8524 - Laboratoire Paul Painlevé, F-59000 Lille, France}
\email{benoit.merlet@univ-lille.fr}
\address{Marc Pegon, Univ. Lille, CNRS, Inria, UMR 8524 - Laboratoire Paul Painlevé, F-59000 Lille, France}
\email{marc.pegon@univ-lille.fr}
\date{}

\begin{document}

\begin{abstract}
Motivated by Gamow's liquid drop model in the large mass regime, we consider an isoperimetric
problem in which the standard perimeter $P(E)$ is replaced by $P(E)-\gamma P_\varepsilon(E)$, with
$0<\gamma<1$ and $P_\varepsilon$ a nonlocal energy such that $P_\varepsilon(E)\to P(E)$ as
$\varepsilon$ vanishes. We prove that unit area minimizers are disks for $\varepsilon>0$ small
enough.\\
More precisely, we first show that in dimension $2$, minimizers are necessarily convex, provided
that $\varepsilon$ is small enough. In turn, this implies that minimizers have nearly circular
boundaries, that is, their boundary is a small Lipschitz perturbation of the circle. Then, using a
Fuglede-type argument, we prove that (in arbitrary dimension $n\geq 2$) the unit ball in
$\mathbb{R}^n$ is the unique unit-volume minimizer of the problem among centered nearly spherical
sets. As a consequence, up to translations, the unit disk is the unique minimizer.\\
This isoperimetric problem is equivalent to a generalization of the liquid drop model for the atomic
nucleus introduced by Gamow, where the nonlocal repulsive potential is given by a radial,
sufficiently integrable kernel. In that formulation, our main result states that if the
first moment of the kernel is smaller than an explicit threshold, there exists a critical mass $m_0$
such that for any $m>m_0$, the disk is the unique minimizer of area $m$ up to translations.
This is in sharp contrast with the usual case of Riesz kernels, where the problem does not
admit minimizers above a critical mass.
\end{abstract}

\maketitle
\vspace{-0.5cm}

\tableofcontents

\vspace{-1cm}

\section{Introduction}
\addtocontents{toc}{\protect\setcounter{tocdepth}{1}}

Given a positive, radial, measurable kernel $G:\Ren\to [0,+\infty)$ with finite first moment (that
is, $\abs{x}G(x)\in L^1(\Ren)$), we consider the nonlocal perimeter functional $\Per_G$
(see e.g. \autocite{CN2018,BP2019}) defined on measurable sets $E\subsq\Ren$ by
\begin{equation}\label{eq:defPG}
\Per_G(E)\coloneqq 2\iint_{E \times \left(\Ren\setminus E\right)} G(x-y) \dx\dy
=\iint_{\Ren\times\Ren} \abs{\ind_E(x)-\ind_E(y)}G(x-y)\dx\dy.
\end{equation}
Here $\ind_E$ denotes the indicator function of $E$.\\
For $\eps>0$, we introduce the rescaled kernel $G_\eps(x)\coloneqq\eps^{-(n+1)}G(\eps^{-1}x)$,
$x\in\Ren$. As will be justified later, the first moment of $G$ is fixed to an explicit dimensional
constant (see \cref{Hint}) so that $\Per_{G_\eps}(E)$ converges to $P(E)$ as $\eps$ vanishes. Given
$\gamma\in (0,1)$ and $\eps>0$, we study the minimization problem
\begin{equation}\label{minrp}\tag{$\mathcal{P}_{\gamma,\eps}$}
\min \Bigg\{ P(E)-\gamma\Per_{G_\eps}(E)~:~\abs{E}=\abs{B_1}\Bigg\},
\end{equation}
over sets of finite perimeter $E$ in $\Ren$, where $\abs{E}$ denotes the volume of $E$ (which we
often call its mass), that is, its Lebesgue measure, and $B_1$ is the open unit ball of $\Ren$.

Let us emphasize the competition between the two terms. The perimeter is an attractive term
minimized by balls under volume constraint. On the contrary, if $G$ is radially decreasing, due to
the negative sign, the nonlocal term is maximized by balls\footnote{\edit{This can be seen by Riesz'
symmetric rearrangement, using e.g. \autocite[Chapter~3.7]{LL2001} and the fact that $G$ is equal to
its symmetric rearrangement in that case.}\\In fact, even if $G$ is not radially decreasing,
$G_\eps$ \enquote{concentrates} near the origin when $\eps$ is small, heuristically making
$(-\Per_{G_\eps})$ a repulsive term whenever $G$ is not identically equal to $0$.}, \edit{and there
exists no minimizer for the functional $(-\Per_{G_\eps})$ under volume constraint}. This competition
makes the minimization problem nontrivial, even when it comes to existence of minimizers.

Problem \cref{minrp} is closely linked with variations of Gamow's liquid drop model for the atomic
nucleus \edit{in the large mass regime. Indeed, thanks to the $\eps^{-(n+1)}$ factor in $G_\eps$,
changing variables in \cref{eq:defPG}, we have $\Per_{G_\eps}(E)=\eps^{n-1}\Per_G(\eps^{-1}E)$, so
that
\[
P(E)-\gamma \Per_{G_\eps}(E) = \eps^{n-1}\big( P(\eps^{-1}E)-\gamma \Per_G(\eps^{-1}E)\big),
\]
and \cref{minrp} is equivalent to the problem
\begin{equation}\label{minpb}\tag{$\mathcal{P}'_{\gamma,\eps}$}
\min \Bigg\{ P(F)-\gamma\Per_{G}(F)~:~\abs{F}=\frac{\abs{B_1}}{\eps}\Bigg\},
\end{equation}
in the sense that $E_\eps$ is a minimizer of \cref{minrp} if and only if $F_\eps\coloneqq
\eps^{-1}E_\eps$ is a minimizer of \cref{minpb}.
Then, if in addition we assume that~$G$ is integrable in $\Ren$, we may write
\begin{equation}\label{eq:equivgamow}
\frac{1}{2}\Per_{G}(F) = \norm{G}_{L^1(\Ren)}\abs{F}-\iint_{F\times F} G(x-y)\dx\dy,
\end{equation}
thus \cref{minpb} is in fact equivalent to
\begin{equation}\label{mingam}\tag{$\mathcal{G}_{m_\eps}$}
\min \left\{ P(F)+\iint_{F\times F} \wt{G}(x-y)~:~ \abs{F}=m_\eps\right\},
\end{equation}
where we have set $m_\eps\coloneqq \eps^{-1}\abs{B_1}$ and $\wt{G}\coloneqq 2\gamma G$}. When $n=3$
and $\wt{G}(x)=1/(8\pi\abs{x})$, this is Gamow's liquid drop model (see \autocite{CMT2017} for a
general overview); note however that in that case, the minimized functional cannot be rewritten as
the difference between the perimeter and a nonlocal perimeter, since $\wt{G}$ is not integrable at
infinity.
As a prototypical model for various physical systems involving the competition between short-range
attractive forces and long-range repulsive ones, generalizations of this model have gained
increasing interest during the past decade, in particular generalizations in higher dimensions,
where the Coulomb potential is replaced with Riesz potentials, that is,
$\wt{G}(x)=\abs{x}^{\alpha-n}$, $\alpha\in(0,n)$.
\noblue{In particular, it was shown that for every Riesz kernel, in the small mass regime, the
unique minimizer of the liquid drop model~\cref{mingam} is the ball, up to translations
(see~\autocite{KM2013,KM2014,Jul2014,BC2014,FFM+2015}).
Conversely, for $\alpha\in [n-2,n)$, the problem admits no minimizer above a critical mass (see
\autocite{BC2014,KM2013,KM2014,LO2014,FKN2016,FN2021}; see also \autocite{FL2015}).}
More general kernels of Riesz-type were studied e.g. in \autocite{CFP2020,NP2021,MW2021}, where the
unit ball is shown to be the unique minimizer in the small mass regime.

Although the small mass regime has been extensively studied, the literature on large mass
minimizers for Gamow-type problems is still sparse, since existence is rather unexpected in that
case, and is usually only recovered by adding an extra attractive potential, such as in
\autocite{ABCT2017,ABCT2018,GO2018}, or by adding a density to the perimeter, as in
\autocite{ABTZ2021}, where the authors show that if the density is a power-law growing sufficiently
fast at infinity, then minimizers always exist, and are balls for large masses.
It is worth mentioning that in the case of general kernels \textsl{with compact support}, the
author of \autocite{Rig2000} shows that minimizers exist \textsl{for all masses}.

Between Riesz kernels, which are not integrable at infinity, where \cref{mingam} does not admit
minimizers above a critical mass, and compactly supported kernels, where \cref{mingam} always admits
minimizers, it is natural to wonder what happens with non-compactly supported but
reasonably decaying kernels, such as Bessel kernels. These kernels behave as Riesz potentials near
the origin, but decrease exponentially at infinity. They were suggested in \autocite{KMN2016} as a
replacement for Riesz kernels for modeling diblock copolymers when long-range interactions are
partially screened by fluctuations in the background nuclear fluid density.
\edit{In \autocite{MW2021} (see Section 1.2 therein), motivated by some model for cell motility, the
authors suggest to study problem \cref{mingam} for rescalings of the kernel $G$, where $G$ is the
fundamental solution of $\left(\id +(-\Delta)^{s/2}\right)G=\delta_0$, for $s\in (0,2)$.
As pointed out in \autocite[Remark~1.3]{MW2021}, for $s\in(0,1)$ their asymptotic rescalings
correspond to the small mass regime, while for $s\in(1,2)$ they correspond to the large mass
regime. The authors focus on the case $s\in(0,1)$. Our work actually addresses the case
$s\in(1,2)$ (and with more general kernels).}

The study of the liquid drop model in the \textsl{large mass regime} for integrable kernels with
finite first moment (such as Bessel kernels or the ones just mentioned for $s\in(1,2)$), which is
equivalent to the study of \cref{minrp} when $\eps$ is small, has been started by the second author
in \autocite{Peg2021}. The existence of minimizers for any $\gamma\in(0,1)$ was established therein
for $\eps$ small enough, as well as the convergence of minimizers to the unit ball as $\eps$
vanishes. \noblue{It was conjectured there that the ball is actually the unique minimizer up to
translations, for $\eps$ small enough.  In this paper, we give a positive answer to this
conjecture in dimension $n=2$ under reasonable assumptions on the first moment of $G$ and the
second moment of $\grad G$, which are still satisfied by Bessel kernels. The conjecture remains
open in higher dimensions.}

Let us introduce the \enquote{critical energies}
\[
\calE_{G_\eps}:=P-\Per_{G_\eps}=\calF_{1,G_\eps},
\]
and define the energies
\begin{equation}\label{eq:defF}
\calF_{\gamma,G_\eps}(E)
\coloneqq (1-\gamma) P(E)+\gamma\calE_{G_\eps}(E)
= P(E)-\gamma\Per_{G_\eps}(E).
\end{equation}
Although the paper mostly deals with the \enquote{subcritical case} $\gamma<1$, we focus on the
critical energies in \cref{subsec:convexification}, and show that they decrease by convexification.
Finally, for any $k\in\N\setminus\{0\}$, we denote by
\begin{equation}\label{defmom}
I_G^{k} \coloneqq \int_{\Ren} \abs{x}^{k}\abs{\partial_r^{k-1} G(x)}\dx
\end{equation}
the $k$-th moment of the $(k-1)$-th radial derivative of the kernel $G$, whenever it is
well-defined.

In the paper, starting from \cref{sec:prelim}, \textbf{we shall always implicitly assume} that the
kernel $G$ satisfies the following general assumptions:
\begin{enumerate}[label=(H\arabic*)]
\item\label{Hrad} $G$ is nonnegative and radial, that is, there exists a measurable function
$g:(0,+\infty)\to[0,+\infty)$ such that $G(x)=g(\abs{x})$ for almost every $x\in\Ren$;
\item\label{Hint} the first moment of $G$ is finite and set to be
\[
I_G^1 = \frac{1}{\bfK_{1,n}},
\]
where, for any $\nu\in\bbS^{n-1}$, the constant $\bfK_{1,n}$ is defined by
\[
\bfK_{1,n}=\dashint_{\bbS^{n-1}} \abs{\sigma\cdot \nu}\dH^{n-1}_\sigma.
\]
\end{enumerate}
Starting from \cref{sec:fuglede}, we may explicitly use the extra assumption:
\begin{enumerate}[label=(H\arabic*)]
\setcounter{enumi}{2}
\item\label{Hhighermoments} $G\in W^{1,1}_\loc(\Ren\setminus\{0\})$, $I_G^2<\infty$, and
$g'(r)=O(r^{-(n+1)})$ at infinity.
\end{enumerate}

These assumptions are in particular satisfied by the Bessel kernels $\mathcal{B}_{\kappa,\alpha}$,
that is, fundamental solutions of the operators $(I-\kappa\Delta)^{\alpha/2}$, for
$\kappa,\alpha>0$ (see e.g.  \autocite[§~3.2]{Peg2021} or \autocite{Gra2014} for their definition
and properties).

\noblue{Even though our main result is in dimension $n=2$, where we prove that the unit disk is the
only minimizer up to translations, provided that $\eps$ is small enough, note that the intermediate
results of \cref{subsec:convnearlysph} and \cref{sec:fuglede} are obtained in arbitrary dimension.
That is, convex minimizers are nearly spherical sets, and the unit ball of~$\Ren$ is the unique
minimizer among nearly spherical sets whenever $\eps$ is small enough.}

Note also that the kernel $G$ is assumed to be radial but not necessarily radially nonincreasing, as
is often the case.
Let us emphasize that, contrarily to the small mass regime for Riesz-type potentials, here the
nonlocal perimeter term does not vanish in the limit but rather converges to a fraction of the
standard perimeter.

We shall now state the main result of the paper.

\begin{mainres}[Minimality of the unit disk]\label{mainthm:mindisk}
Assume $n=2$\edit{, $\gamma\in(0,1)$} and $G$ satisfies \crefrange{Hrad}{Hhighermoments}. Then there
exists $\eps_A=\eps_A(G,\gamma)>0$, such that, for every $\eps<\eps_A$, the unit disk is the unique
minimizer of \cref{minrp}, up to translations and Lebesgue-negligible sets.
\end{mainres}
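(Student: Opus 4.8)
The plan is to combine the three main ingredients announced in the introduction, in the following order. First I would establish the structural fact that, for $\eps$ small, every connected minimizer of \cref{minrp} is convex. This is the content of \cref{subsec:convexification} and the convexity step; its mechanism is that the ``critical energy'' $\calE_{G_\eps} = P - \Per_{G_\eps}$ decreases under convexification (which uses only the radiality and finite first moment \ref{Hrad}--\ref{Hint}: convexification does not increase perimeter, and one shows it does not decrease $\Per_{G_\eps}$ using a slicing/layer-cake argument in each direction $\nu\in\bbS^{n-1}$ together with the normalization of $I_G^1$). Since $\calF_{\gamma,G_\eps} = (1-\gamma)P + \gamma\calE_{G_\eps}$ and both summands decrease under taking the convex hull, any minimizer must already be convex — once one knows minimizers are connected, which for $\eps$ small follows from the compactness/convergence-to-the-ball results of \autocite{Peg2020} (a minimizer converges in $L^1$ to $B_1$, hence for small $\eps$ it cannot be disconnected without paying too much perimeter relative to the gain).

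Second, with convexity in hand, I would invoke the result of \cref{subsec:convnearlysph}: a convex minimizer, being $L^1$-close to $B_1$ and of the same volume, has boundary which is a small Lipschitz graph over $\partial B_1 = \mathbb{S}^{1}$ — i.e. it is a \emph{nearly spherical set} in the sense used in the Fuglede-type argument. Quantitatively, convexity upgrades $L^1$-closeness to $C^0$- (indeed Lipschitz-) closeness of the boundary to the circle, with the Lipschitz seminorm of the defining function tending to $0$ as $\eps\to 0$. After a translation we may center the set so that its barycenter is at the origin, which is the normalization required by the Fuglede estimate.

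Third, I would apply the Fuglede-type stability estimate proved in \cref{sec:fuglede} (valid in all dimensions $n\ge 2$, using \ref{Hhighermoments}): there is $\eps_*$ and a modulus such that, among centered nearly spherical sets of unit volume with small enough Lipschitz norm, one has $\calF_{\gamma,G_\eps}(E) > \calF_{\gamma,G_\eps}(B_1)$ unless $E = B_1$. Concretely this is a second-variation computation: writing $\partial E$ as $\{(1+u(x))x : x\in\mathbb{S}^{1}\}$ with $\int u$ and $\int xu$ small, one expands $P(E)$ and $\Per_{G_\eps}(E)$ to second order in $u$ in terms of the Fourier coefficients of $u$; the perimeter contributes the classical positive-definite quadratic form $\sum_{k\ge 2}(k^2-1)\widehat u_k^{\,2}$ (after killing the $k=0,1$ modes by the volume and barycenter constraints), while the nonlocal term contributes a quadratic form whose coefficients are controlled, using \ref{Hhighermoments} and $I_G^2<\infty$, by something converging (as $\eps\to 0$) to the quadratic form of a fixed multiple of the perimeter. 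Since $\gamma<1$, the net quadratic form is strictly positive definite on the admissible modes for $\eps$ small, giving strict minimality of $B_1$ in a Lipschitz neighborhood.

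The main obstacle is the interface between the second and third steps: one must make sure that the Lipschitz-smallness guaranteed by convexity for a minimizer at scale $\eps$ is \emph{quantitatively} below the threshold required by the Fuglede estimate at that same scale $\eps$ — both the ``size of the nearly spherical neighborhood'' and the ``$\eps$ at which the nonlocal quadratic form is dominated by the perimeter one'' depend on $\eps$ (and on $G,\gamma$), so the two $\eps_*$'s must be reconciled into a single one, uniformly. In practice this is handled by noting that the convergence of minimizers to $B_1$ in the appropriate (Lipschitz) topology is uniform over $\eps$ small, so for all sufficiently small $\eps$ a minimizer lands in the Fuglede neighborhood, and then the strict inequality $\calF_{\gamma,G_\eps}(E) > \calF_{\gamma,G_\eps}(B_1)$ forces $E=B_1$ up to translation and null sets. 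A secondary technical point is justifying that $\Per_{G_\eps}(B_1)$ and its second variation behave as claimed as $\eps\to 0$ — that $\tfrac12\Per_{G_\eps}(E) \to P(E)$ for the relevant sets and that the linearization commutes with the limit — which is exactly where the normalization \ref{Hint} of $I_G^1$ and the decay hypothesis $g'(r)=O(r^{-(n+1)})$ in \ref{Hhighermoments} are used. I expect the rest to be bookkeeping: assembling the chain convexity $\Rightarrow$ nearly spherical $\Rightarrow$ (Fuglede) $\Rightarrow$ $E=B_1$, and recording that the only freedom left is translation.
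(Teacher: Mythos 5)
Your overall architecture (convexity $\Rightarrow$ nearly circular $\Rightarrow$ Fuglede) is exactly the paper's, and your descriptions of the second and third steps — including the need to reconcile the nearly-spherical threshold of the Fuglede estimate with the smallness $t(\eps)\to 0$ coming from convexity — match what is done. The genuine gap is in your first step. You conclude that ``any minimizer must already be convex'' because $\calF_{\gamma,G_\eps}(\co(E))\leq\calF_{\gamma,G_\eps}(E)$; but $\co(E)$ is \emph{not an admissible competitor}: if $E$ is not convex then $\abs{\co(E)}>\abs{B_1}$, so the volume constraint in \cref{minrp} is violated and no contradiction with minimality follows. The paper flags this explicitly and repairs it with an extra, non-trivial ingredient (\cref{lem:decconvscale}): one rescales $\co(E)$ by $t_\eps=\sqrt{\abs{B_1}/\abs{\co(E)}}<1$ to restore the volume, and must then show that this downscaling \emph{strictly decreases} $\calF_{\gamma,G_\eps}$. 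That strict decrease is not free: it requires controlling $\frac{\dd}{\dt}\Per_{G_\eps}(tE)$ via the quantity $\wt{P}_{G_\eps}$ of \cref{lem:derivpg}, and proving the bound $(1-\tau)P(E)\leq\wt{P}_{G_\eps}(E)\leq P(E)$ for convex sets squeezed between $B_{1-\delta(\eps)}$ and $B_{1+\delta(\eps)}$ (\cref{lem:minalmflat,lem:mincone}); here convexity and the sandwich between nearby balls substitute for the missing uniform regularity of minimizers. Without this rescaling argument the convexity claim, and hence the whole chain, does not close.

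A secondary inaccuracy: you assert that convexification ``does not decrease $\Per_{G_\eps}$.'' That is false in general — already in one dimension, convexifying two distant intervals roughly halves $\Per^1_{\rho_\eps}$. What the slicing argument actually shows is that the \emph{difference} $P-\Per_{G_\eps}$ decreases slice by slice (for connected sets in dimension $2$, where a line meets $E$ iff it meets $\co(E)$): the drop in the local perimeter dominates any drop in the nonlocal one, and this is exactly where the normalization \cref{Hint} enters, through $\calE^1_{\rho_\eps}\geq 0$. This does not affect your conclusion about the critical energy, but the mechanism you give for it is not the correct one.
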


\noblue{In terms of Gamow's problem \cref{mingam}, this means that if
\edit{$I_{\wt{G}}^1<2/\bfK_{1,2}=\pi$ (here $I_{\wt{G}}^1$ is the first moment of the kernel
$\wt{G}$ defined as in \cref{defmom})} and $\wt{G}$ is in addition integrable, \edit{then there
exists a critical mass $m_*$ such that the only solutions of \cref{mingam} with $m_\eps>m_*$ are the
disks of area $m_\eps$.}
In the particular case of the Bessel kernels $\wt{G}=\mathcal{B}_{\kappa,\alpha}$ with
$\kappa,\alpha>0$, such a critical mass exists whenever
\[
\kappa<\pi\left(\frac{\Gamma\left(\frac{\alpha}{2}\right)}
{\Gamma\left(\frac{1+\alpha}{2}\right)}\right)^2,
\]
(see \autocite[Corollary~3.9 \& Proposition~3.10]{Peg2021}).\medskip}

The proof of \cref{mainthm:mindisk} decomposes as follows. First we establish:

\begin{mainthm}[2D minimizers are convex]\label{mainthm:convexmin}
Assume $n=2$\edit{, $\gamma\in(0,1)$} and $G$ satisfies \cref{Hrad,Hint}. Then there exists
$\eps_1=\eps_1(G,\gamma)>0$ such that, for every $0<\eps<\eps_1$, \cref{minrp} admits a minimizer,
and every minimizer is convex, up to a Lebesgue-negligible set.
\end{mainthm}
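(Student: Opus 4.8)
The plan is to show that, as soon as $\eps$ is small, every minimizer is $C^{2}$-close to a disk, with everywhere positive boundary curvature, hence convex. Existence of a minimizer for $\eps<\eps_1$ (together with compactness of minimizers as $\eps\to0$) is known \autocite{Peg2020}; I would first record two structural facts, both with constants controlled uniformly in $\eps$ (this is where the technical work lies, see below): a minimizer $E$ is a bounded $(\Lambda,r_0)$-minimizer of the perimeter, and it satisfies uniform interior and exterior density estimates. The elementary starting point is the bound $\Per_{G_\eps}(F)\leq P(F)$, valid for every finite-perimeter set $F$ under the normalization \cref{Hint}: write $\Per_{G_\eps}(F)=\int_{\Ren}G_\eps(z)\abs{F\triangle(F+z)}\dz$, bound $\abs{F\triangle(F+z)}\leq\int_{\partial^*F}\abs{\nu_F\cdot z}\dH^{n-1}$, and use $\int_{\Ren}G_\eps(z)\abs{\nu\cdot z}\dz=I_G^1\bfK_{1,n}=1$. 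Hence $\calE_{G_\eps}\geq0$, so
\[
(1-\gamma)P(E)\leq\calF_{\gamma,G_\eps}(E)\leq\calF_{\gamma,G_\eps}(B_1)=2\pi-\gamma\Per_{G_\eps}(B_1)\xrightarrow[\eps\to0]{}2\pi(1-\gamma),
\]
and since $\abs{E}=\pi$ the isoperimetric inequality forces $P(E)\to2\pi$, i.e. the isoperimetric deficit of $E$ tends to $0$. Combined with the density estimates this rules out extra components and holes — either would create a uniform gap $P(E)\geq2\pi+c_0$ — so for $\eps$ small $E$ is connected and simply connected.

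By the quantitative isoperimetric inequality the vanishing of the deficit gives $\abs{E\triangle B_1(x_\eps)}\to0$ for suitable centers $x_\eps$; together with the uniform density estimates and the $(\Lambda,r_0)$-regularity theory, $\partial E$ is for $\eps$ small a normal graph over $\partial B_1(x_\eps)$ of a function $u_\eps$ with $\norm{u_\eps}_{C^{1,\alpha}}\to0$. The Euler--Lagrange equation of \cref{minrp} reads $\kappa_E=\lambda_\eps+2\gamma v_E^\eps$ on $\partial E$, where $\kappa_E$ is the scalar curvature, $\lambda_\eps$ is a Lagrange multiplier, and $v_E^\eps(x)=\pv\int_{\Ren}(\ind_{E^\compl}-\ind_E)(y)\,G_\eps(x-y)\dy$ is the nonlocal mean curvature induced by $G_\eps$. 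The crux is the uniform estimate $\sup_{x\in\partial E}\abs{v_E^\eps(x)}\to0$ as $\eps\to0$: near $x\in\partial E$ the set $E$ agrees, up to an error measured by $\norm{u_\eps}_{C^{1,\alpha}}$, with a half-space $H_x$ through $x$, and by radial symmetry of $G$ one has $\int_{\Ren}(\ind_{H_x^\compl}-\ind_{H_x})(y)G_\eps(x-y)\dy=0$; thus $\abs{v_E^\eps(x)}$ is dominated by $\int_{E\triangle H_x}G_\eps(x-\cdot)$, which splits into a short-range contribution controlled by the $C^{1,\alpha}$-oscillation of $\partial E$ at scale $\eps$ and a long-range contribution bounded by the escaping mass $\int_{\abs{z}>r}G_\eps(z)\dz\leq(\eps/r)I_G^1$, both $o_\eps(1)$ uniformly in $x$. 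The same type of estimate shows $v_E^\eps\in C^{0,\beta}(\partial E)$ with vanishing norm, so Schauder-type regularity for the prescribed-curvature equation upgrades $\partial E$ to $C^{2,\beta}$ and makes the Euler--Lagrange equation classical.

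To finish, integrate $\kappa_E=\lambda_\eps+2\gamma v_E^\eps$ over $\partial E$: since $n=2$ and $E$ is simply connected, $\int_{\partial E}\kappa_E\dH^{n-1}=2\pi$, hence $\lambda_\eps=\frac{2\pi}{P(E)}-\frac{2\gamma}{P(E)}\int_{\partial E}v_E^\eps\dH^{n-1}\to1$ as $\eps\to0$. Therefore, for $\eps$ small enough, $\kappa_E=\lambda_\eps+2\gamma v_E^\eps\geq\tfrac12$ everywhere on $\partial E$; a simple closed $C^{2}$ plane curve with positive curvature bounds a convex set, so $E$ is convex.

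The main obstacle is precisely the uniformity in $\eps$ used above. When $G$ only has a finite first moment it need not belong to $L^1(\Ren)$, so the naive perturbation bound $\abs{\Per_{G_\eps}(F)-\Per_{G_\eps}(E)}\leq2\norm{G_\eps}_{L^1}\abs{E\triangle F}$ degenerates; one must localize and use the finiteness of $I_G^1$ — via $\abs{F\triangle(F+z)}\lesssim\abs{z}P(F)$ and the escaping-mass bound — to obtain $(\Lambda,r_0)$-minimality, density estimates and the estimate on $v_E^\eps$ with constants that do not blow up, and to control the principal-value integral near the singularity of $G$ on the a priori only $C^{1,\alpha}$ boundary. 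A complementary, purely variational way of ruling out \emph{macroscopic} failures of convexity is to test minimality against a rescaled copy $t\,\co(E)$ of the convex hull, using $P(\co(E))\leq P(E)$ and the fact, established in \cref{subsec:convexification}, that the critical energy $\calE_{G_\eps}$ also decreases under convexification; exploiting $E\approx B_1$ to control the kernel rescaling, this yields a gain comparable to $\abs{\co(E)}-\abs{E}$ up to an $o_\eps(1)$ error, hence $\abs{\co(E)\setminus E}\to0$ — but only the curvature estimate above closes the remaining small-scale gap.
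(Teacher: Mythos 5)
Your route (uniform regularity $\Rightarrow$ Euler--Lagrange $\Rightarrow$ everywhere positive curvature) is genuinely different from the paper's, but it has two gaps that are not merely technical. First, everything rests on $(\Lambda,r_0)$-minimality, density estimates and $C^{1,\alpha}$ bounds that are \emph{uniform in $\eps$}. The only perturbation bound available in general is $\abs{\Per_{G_\eps}(E)-\Per_{G_\eps}(F)}\leq 2\norm{G_\eps}_{L^1}\abs{E\triangle F}$ with $\norm{G_\eps}_{L^1(\Ren)}=\eps^{-1}\norm{G}_{L^1(\Ren)}$ (and under \cref{Hrad,Hint} alone $G$ need not even be integrable near the origin), so the quasi-minimality constant degenerates as $\eps\to 0$; the paper explicitly identifies the resulting lack of uniform regularity as \emph{the} main obstacle and designs its proof to avoid it. You acknowledge this obstacle but only assert that localization and the finiteness of $I_G^1$ overcome it; no argument is given, and none is known. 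Second, your key analytic claim $\sup_{\partial E}\abs{v_E^\eps}\to 0$ is false. Since $\Per_{G_\eps}\to P$, the first variation of $\Per_{G_\eps}$ converges to that of the perimeter, i.e.\ $v_E^\eps$ converges (for smooth sets) to a positive multiple of the curvature, not to zero: already for $E=B_1$ one has $\int_{B_1\triangle H_x}G_\eps(x-y)\dy\sim\int_0^{2/\eps}s^n g(s)\ds\to I_G^1/\abs{\bbS^{n-1}}>0$, because the symmetric difference with the tangent half-space at scale $\rho$ has measure of order $\rho^{n+1}$, exactly matching the first moment. Consequently $\int_{\partial E}v_E^\eps\dH^1\not\to 0$ and your computation $\lambda_\eps\to 1$ is also wrong; the correct limit equation is $(1-\gamma)\kappa_E=\lambda$, so the sign conclusion might survive, but the argument as written does not. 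Moreover, with $G$ only measurable and possibly as singular as $r^{-(n+\beta)}$ for any $\beta<1$, the principal value defining $v_E^\eps$ need not even converge pointwise on a $C^{1,\alpha}$ boundary unless $\alpha>\beta$.

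You also misread the variational alternative you sketch at the end: it is not a partial argument leaving a ``small-scale gap.'' The paper's proof is exactly this: for a connected minimizer $E$, the slicing argument gives $\calF_{\gamma,G_\eps}(\co(E))\leq\calF_{\gamma,G_\eps}(E)$, and if $E$ is not convex then $t_\eps\coloneqq\sqrt{\abs{B_1}/\abs{\co(E)}}<1$ strictly, while \cref{lem:decconvscale} (proved via the scaling derivative of $\Per_{G_\eps}$ and the lower bound $\wt{P}_{G_\eps}\geq(1-\tau)P$ for convex sets trapped between $B_{1-\delta(\eps)}$ and $B_{1+\delta(\eps)}$) shows that scaling a convex near-ball down by any $t\in(\frac12,1)$ \emph{strictly} decreases $\calF_{\gamma,G_\eps}$. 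Hence $\calF_{\gamma,G_\eps}(t_\eps\co(E))<\calF_{\gamma,G_\eps}(E)$ with $t_\eps\co(E)$ admissible, a contradiction with no error term and no remaining gap. The advantage of this route is precisely that it needs no regularity of $\partial E$ whatsoever, only connectedness, the trapping between two nearby disks from \cref{thm:existmin}, and convexity of the competitor.
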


The existence of minimizers for small $\eps$ was shown in \autocite{Peg2021}, where the second
author also proved that they are necessarily connected whenever $\eps$ is small enough.
The idea for proving the convexity of minimizers is to study the critical energy on the real line,
and show that it decreases by convexification and by expansion of segments, so that, by a slicing
argument, the critical energy of a \textsl{connected set in dimension~$2$} decreases after
convexification. As a consequence, since the perimeter of a connected set is also reduced by
convexification, so is $\calF_{\gamma,G_\eps}$. This slicing argument is specific to the dimension
$2$, where a line intersects a connected set if and only if it intersects its convex hull. This
fails in higher dimension.

Note that this is not enough to conclude that minimizers are convex. Indeed, although for every
minimizer $E_\eps\subset\R^2$ with $\eps$ small enough, we have
$\calF_{\gamma,G_\eps}(\co(E_\eps))\leq\calF_{\gamma,G_\eps}(E_\eps)$, where $\co(E_\eps)$ denotes
the convex hull of $E_\eps$, the volume of $\co(E_\eps)$ is larger than $\abs{B_1}$ if $E_\eps$ is
not convex.  However, using the fact that a minimizer $E_\eps$ is already close to the unit ball by
\autocite{Peg2021} and the convexity of $\co(E_\eps)$, we prove that, if $E_\eps$ is not convex,
scaling down $\co(E_\eps)$ to make its volume equal to $\abs{B_1}$ strictly
decreases the energy~$\calF_{\gamma,G_\eps}$, which contradicts the minimality of $E_\eps$.

The convexity of minimizers $E_\eps$ allows us to improve the convergence of $\partial E_\eps$
towards $\partial B_1$ as $\eps$ goes to $0$, from the previously known Hausdorff convergence to
Lipschitz convergence. We deduce that minimizers are nearly spherical sets, whose definition is
given just below.

\begin{maindfn}[Nearly spherical sets]\label{maindfn:nearlysph}
For $t\in (0,1/2)$, we say that $E\subsq\Ren$ is a centered $t$-nearly spherical set if
\[
\int_E x\dx=0,
\]
and if there exists $u\in \Lip(\bbS^{n-1})$ with $\norm{u}_{L^\infty(\bbS^{n-1})}+\norm{\grad_\tau\,
u}_{L^\infty(\bbS^{n-1})}\leq 1$ such that
\[
\partial E=\Big\{ \big(1+tu(x)\big)x~:~ x\in\bbS^{n-1}\Big\}.
\]
In dimension $n=2$, we use the terminology \enquote{$t$-nearly circular set} for \enquote{$t$-nearly
spherical set}.
\end{maindfn}

\begin{mainthm}[2D minimizers have nearly circular boundaries; see
{\cref{prp:minsph}}]\label{mainthm:minnearlycirc}
Assume $n=2$\edit{, $\gamma\in(0,1)$} and~$G$ satisfies \cref{Hrad,Hint}. There exist
$\eps_2=\eps_2(G,\gamma)>0$ and a function $t:(0,\eps_2)\to [0,1/2)$ depending only on $G$
and $\gamma$ such that
\begin{itemize}
\item $t(\eps)\to 0$ as $\eps\to 0$;
\item for every $\eps<\eps_2$, any minimizer $E$ of \cref{minrp} is, up to a translation and a
Lebesgue-negligible set, a centered $t(\eps)$-nearly circular set.
\end{itemize}
\end{mainthm}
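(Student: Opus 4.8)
The plan is to derive \cref{mainthm:minnearlycirc} from the convexity of minimizers (\cref{mainthm:convexmin}) together with the Hausdorff convergence of $\partial E_\eps$ to $\bbS^1$ proved in \autocite{Peg2020}, via the elementary fact that a planar convex body trapped between two concentric disks of nearly equal radii has a Lipschitz radial function whose Lipschitz constant is controlled by the square root of the gap between the radii.

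\textbf{Step 1 (pinching and recentering).} Fix $0<\eps<\eps_1$ and a minimizer $E$ of \cref{minrp}, which by \cref{mainthm:convexmin} we may take to be a bounded open convex set. By \autocite{Peg2020} there is a modulus $\delta_0=\delta_0(\eps)$, depending only on $G,\gamma$ and vanishing as $\eps\to0$, such that, after a translation, $\partial E$ lies in the closed annulus $\{\,1-\delta_0\leq\abs{x}\leq 1+\delta_0\,\}$ while every point of $\bbS^1$ is within distance $\delta_0$ of $\partial E$. For $\delta_0<1$, convexity then yields $B_{1-\delta_0}\subseteq E\subseteq\ol{B_{1+\delta_0}}$ with $0\in\operatorname{int}E$: indeed $E\subseteq\ol E=\co(\partial E)\subseteq\ol{B_{1+\delta_0}}$; a convex curve within Hausdorff distance $\delta_0<1$ of $\bbS^1$ cannot lie in a closed half-plane through the origin, hence $0\in\operatorname{int}E$; and then $B_{1-\delta_0}$, being connected, containing $0\in E$, and disjoint from $\partial E$, is contained in $E$. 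Next, the barycenter $b:=\abs{B_1}^{-1}\int_E x\dx$ satisfies $\abs{b}\leq C\delta_0$, since $\int_{B_{1-\delta_0}}x\dx=0$ and $\abs{\int_{E\setminus B_{1-\delta_0}}x\dx}\leq(1+\delta_0)\abs{B_{1+\delta_0}\setminus B_{1-\delta_0}}\leq C\delta_0$. Replacing $E$ by $\wt E:=E-b$, we then have $\int_{\wt E}x\dx=0$ and $B_{1-\delta}\subseteq\wt E\subseteq\ol{B_{1+\delta}}$ with $\delta:=\delta_0+\abs{b}\leq(1+C)\delta_0$.

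\textbf{Step 2 (Lipschitz estimate on the radial function, and conclusion).} Since $0\in\operatorname{int}\wt E$, write $\partial\wt E=\{\,\rho(\omega)\omega~:~\omega\in\bbS^1\,\}$ with $1-\delta\leq\rho\leq 1+\delta$. At a.e. boundary point $p=\rho(\omega)\omega$, with outer unit normal $\nu$, the supporting line of $\wt E$ at $p$ lies at distance $\scal{p,\nu}=h_{\wt E}(\nu)\geq 1-\delta$ from the origin (because $B_{1-\delta}\subseteq\wt E$), whereas $\abs{p}\leq 1+\delta$; hence the angle $\psi$ between $\nu$ and $\omega$ obeys $\cos\psi=\scal{p,\nu}/\abs{p}\geq(1-\delta)/(1+\delta)$, and since $\abs{\grad_\tau\rho}=\rho\tan\psi$ a.e., we obtain $\abs{\grad_\tau\rho}\leq(1+\delta)\sqrt{((1+\delta)/(1-\delta))^2-1}\leq C'\sqrt{\delta}$ for $\delta$ small, with $C'$ universal. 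Setting $s:=\rho-1$ thus gives $\norm{s}_{L^\infty(\bbS^1)}+\norm{\grad_\tau s}_{L^\infty(\bbS^1)}\leq\delta+C'\sqrt{\delta}=:t(\eps)$, a quantity depending only on $G,\gamma$ with $t(\eps)\to0$ as $\eps\to0$. Choose $\eps_2\leq\eps_1$ so that $t(\eps)<\tfrac12$ for $\eps<\eps_2$. If $t(\eps)=0$, then $\wt E=B_1$ and $u\equiv0$ works; otherwise $u:=s/t(\eps)\in\Lip(\bbS^1)$ satisfies $\norm{u}_{L^\infty(\bbS^1)}+\norm{\grad_\tau u}_{L^\infty(\bbS^1)}\leq1$ and $\partial\wt E=\{\,(1+t(\eps)u(\omega))\omega~:~\omega\in\bbS^1\,\}$. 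Combined with $\int_{\wt E}x\dx=0$, this exhibits $\wt E$ — hence $E$, up to a translation and a Lebesgue-negligible set — as a centered $t(\eps)$-nearly circular set, which proves the statement.

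\textbf{Main obstacle.} The crux is the quantitative upgrade from Hausdorff ($L^\infty$) closeness to Lipschitz ($C^{0,1}$) closeness of $\partial E$ to $\bbS^1$, i.e.\ the bound $\abs{\grad_\tau\rho}\leq C'\sqrt{\delta}$; Hausdorff proximity alone controls only $\norm{\rho-1}_{L^\infty}$, and it is exactly here that the convexity furnished by \cref{mainthm:convexmin} is essential. A secondary, bookkeeping-type issue is to verify that whatever precise form the convergence statement of \autocite{Peg2020} takes does yield, after a single translation, the clean two-sided inclusion between concentric disks, and that $\delta_0(\eps)$ — and hence $t(\eps)$ — depends only on $G$ and $\gamma$.
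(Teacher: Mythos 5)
Your proposal is correct and follows essentially the same route as the paper's proof (via \cref{prp:minsph}): the two-disk pinching from \autocite{Peg2020} combined with convexity, recentering at the barycenter with the estimate $\abs{b}\lesssim\delta$, and the classical Osserman--Fuglede bound $\norm{\grad_\tau u}_\infty\lesssim\norm{u}_\infty^{1/2}$ for convex bodies trapped between concentric balls. The only cosmetic difference is that you rederive this last estimate directly from the support function, whereas the paper cites Fuglede's inequality $(**)$.
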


\cref{mainthm:minnearlycirc} is a direct consequence of \cref{mainthm:convexmin} and the uniform
convergence of minimizers already shown in \autocite{Peg2021}, using the geometric fact that the
normal vectors to the boundary of a convex set lying between two balls~$B_r$ and $B_R$, $r< 1< R$
converge to those of the unit sphere as $r,R\to 1$ (see \autocite{Oss1987,Fug1989}).

We end the proof by showing that for $\eps$ and $t$ small enough, any centered $t$-spherical
minimizer of~\cref{minrp} is the unit ball. This last result is \textsl{not} specific to dimension
$n=2$.

\begin{mainthm}[Minimality of the unit ball among nearly spherical sets;
see {\cref{prp:fuglede2}}]\label{mainthm:minballsph}
Assume that \edit{$\gamma\in(0,1)$ and} $G$ satisfies \crefrange{Hrad}{Hhighermoments}. Then there exist
$t_*=t_*(n,G,\gamma)>0$ and $\eps_3=\eps_3(n,G,\gamma)>0$ such that, for every
$t<t_*$, if $E$ is a $t$-nearly spherical set, then we have
\[
\calF_{\gamma,\eps}(B_1)\leq \calF_{\gamma,\eps}(E),\qquad\forall 0<\eps<\eps_3,
\]
and the inequality is strict if $E\neq B_1$ (in the sense that they differ by a set positive
measure).
\end{mainthm}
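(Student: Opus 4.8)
The plan is to run a Fuglede-type second-variation argument, comparing $\calF_{\gamma,\eps}(E)$ with $\calF_{\gamma,\eps}(B_1)$ for a $t$-nearly spherical set $E$ whose boundary is $\{(1+tu(x))x : x\in\bbS^{n-1}\}$. Write $\calF_{\gamma,\eps} = (1-\gamma)P + \gamma(P-\Per_{G_\eps})$; equivalently (and more conveniently for the nonlocal part), use $\calF_{\gamma,\eps} = P - \gamma\Per_{G_\eps}$ directly. I would expand both $P(E)$ and $\Per_{G_\eps}(E)$ in powers of $t$ around the ball, using $u$ as the small parameter, and track the volume constraint $|E|=|B_1|$ together with the barycenter constraint $\int_E x\,\dx = 0$, which together kill the zeroth and first spherical harmonic modes of $u$ to leading order (this is the standard normalization that makes the quadratic form coercive).

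First I would recall the classical Fuglede expansion of the perimeter: if $E$ is $t$-nearly spherical with the volume and barycenter constraints, then
\[
P(E) - P(B_1) \geq c_n\, t^2 \norm{u}_{H^{1/2}(\bbS^{n-1})}^2 - C_n\, t^3 \norm{u}_{C^1}\norm{u}_{H^{1/2}}^2,
\]
for dimensional constants $c_n, C_n>0$; this is where the $H^{1/2}$-coercivity of the perimeter comes from, and where the constraints on the first two modes are used. Then the key new estimate is an upper bound on the nonlocal defect: I want to show that
\[
\Per_{G_\eps}(E) - \Per_{G_\eps}(B_1) \leq \omega(\eps)\, t^2 \norm{u}_{H^{1/2}(\bbS^{n-1})}^2 + (\text{higher order in } t),
\]
where $\omega(\eps)\to 0$ as $\eps\to 0$. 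The point is that $\Per_{G_\eps} \to P$ in an appropriate sense as $\eps\to 0$ (this is exactly the rescaling built into \cref{Hint}), so the second variation of $\gamma\Per_{G_\eps}$ at the ball is a small perturbation of $\gamma$ times the second variation of $P$. Combined with $\gamma<1$, the perimeter's quadratic form dominates: $(1 - \gamma - \gamma\omega(\eps))\,c_n\, t^2\norm{u}_{H^{1/2}}^2$ is strictly positive once $\eps$ is small enough that $\gamma(1+\omega(\eps)) < 1$, i.e. $\omega(\eps) < (1-\gamma)/\gamma$. The cubic error terms are absorbed by choosing $t<t_*$ small, uniformly in $\eps$, using $\norm{u}_{C^1}\leq 1$.

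The heart of the matter — and the main obstacle — is establishing the sharp quadratic upper bound on $\Per_{G_\eps}(E) - \Per_{G_\eps}(B_1)$ with a coefficient that is genuinely $o(1)$ in $\eps$ and controlled by the $H^{1/2}$ seminorm (not a stronger norm). Here is where assumption \cref{Hhighermoments} enters: the finiteness of the second moment $I_G^2$ and the decay $g'(r)=O(r^{-(n+1)})$ let one integrate by parts in the double integral defining $\Per_{G_\eps}$, converting it (after the rescaling $G_\eps$) into something comparable to $\int\int \abs{\ind_E(x)-\ind_E(y)}\,|\grad G_\eps|\,\dx\,\dy$-type quantities, and then into a boundary double integral over $\partial E\times\partial E$ weighted by a kernel whose first moment is finite and whose rescaled version concentrates. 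The nonlocal perimeter of a nearly spherical set can be written, via the graph parametrization over $\bbS^{n-1}$, as a double integral over $\bbS^{n-1}\times\bbS^{n-1}$; Taylor-expanding in $t$ produces a zeroth-order term $\Per_{G_\eps}(B_1)$, a first-order term that vanishes by the volume constraint (to leading order) plus a curvature correction, and a quadratic form $Q_\eps(u,u)$. One must then show $Q_\eps(u,u) \leq (1+\omega(\eps))\,Q_P(u,u)$ where $Q_P$ is the perimeter's quadratic form — i.e., that the quadratic form of the nonlocal perimeter converges to that of the perimeter as $\eps\to 0$, in the operator sense on $H^{1/2}$. This should follow by splitting the $y$-integration into $|x-y|\lesssim \sqrt\eps$ and $|x-y|\gtrsim\sqrt\eps$: on the near region, $G_\eps$ behaves like the local perimeter after rescaling and one recovers $Q_P$ up to $o(1)$; on the far region, the contribution is $O(\eps^\theta)$ for some $\theta>0$ using $I_G^1<\infty$ (and $I_G^2<\infty$ for the gradient version), times at most $\norm{u}_{H^{1/2}}^2$. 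I would also need to be careful that the $o(1)$ bound is uniform over the unit ball in $\{u : \norm u_{C^1}\leq 1\}$, which is where having $I_G^2<\infty$ rather than just $I_G^1<\infty$ pays off. Once this quadratic comparison is in hand, assembling the final inequality $\calF_{\gamma,\eps}(E)-\calF_{\gamma,\eps}(B_1) \geq \tfrac12(1-\gamma(1+\omega(\eps)))\,c_n\, t^2\norm u_{H^{1/2}}^2 - C t^3 \norm u_{H^{1/2}}^2 > 0$ for $E\neq B_1$ is routine, choosing first $\eps_3$ so that $\gamma(1+\omega(\eps))<1$ and then $t_*$ so that the cubic term is absorbed.
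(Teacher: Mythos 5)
Your overall strategy is exactly the paper's: expand $P$ and $\Per_{G_\eps}$ around the ball, compare the two quadratic forms, use the volume and barycenter constraints to kill the $k=0,1$ spherical-harmonic modes so that the perimeter's second variation becomes coercive, and exploit $\gamma<1$ together with the smallness of $\eps$ to retain a positive margin that absorbs the cubic errors. Two concrete points do not survive scrutiny as written, however.

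The main one is the Sobolev scale. Fuglede's coercivity for the \emph{standard} perimeter is an $H^1(\bbS^{n-1})$ statement, $P(E_t)-P(B_1)\geq \frac{t^2}{2}\big(\norm{\grad_\tau u}_{L^2}^2-(n-1)\norm{u}_{L^2}^2\big)-Ct^3(\dots)$, with the right-hand side bounded below by $c\,t^2\norm{u}_{H^1}^2$ on the constrained subspace; the $H^{1/2}$-type seminorms belong to the \emph{fractional} perimeters of \autocite{FFM+2015}, not to $P$ or to $\Per_{G_\eps}$. Correspondingly, the second variation of $\Per_{G_\eps}$ at the ball is a Bourgain--Brezis--Mironescu-type quantity $\iint (u(x)-u(y))^2\abs{x-y}^{-2}\eta_\eps(\abs{x-y})$, which is bounded by $(1+o_\eps(1))\norm{\grad_\tau u}_{L^2}^2$ — an $H^1$-size object. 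The correct comparison is therefore \emph{additive}: $Q_\eps(u,u)\leq Q_P(u,u)+o_\eps(1)\norm{\grad_\tau u}_{L^2}^2$, and this error can only be absorbed by the $H^1$-coercivity of $Q_P$ on the constrained subspace; it cannot be absorbed by $H^{1/2}$-coercivity, since $\norm{\grad_\tau u}_{L^2}$ is not controlled by $\norm{u}_{H^{1/2}}$. Your multiplicative formulation $Q_\eps\leq(1+\omega(\eps))Q_P$ is also not meaningful as an operator inequality, because $Q_P(u,u)=\norm{\grad_\tau u}_{L^2}^2-(n-1)\norm{u}_{L^2}^2$ is sign-indefinite before the constraints are imposed. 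Relatedly, your displayed ``key estimate'' $\Per_{G_\eps}(E)-\Per_{G_\eps}(B_1)\leq\omega(\eps)\,t^2\norm{u}^2+\dots$ cannot be right: the quadratic term of the nonlocal deficit does not vanish as $\eps\to0$ but converges to the perimeter's second variation — which is precisely why $\gamma<1$ is indispensable; your later prose has the correct mechanism, but the display contradicts it.

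A secondary, implementational remark: the paper does not integrate by parts in the double integral. It parametrizes $\Per_{G_\eps}(E_t)$ in polar coordinates and splits it into a ``radial'' piece (handled by spherical averaging and the bound $\Per_{G_\eps}\leq P$) and a ``tangential'' piece (handled by the BBM-type lemma together with a change-of-variables lemma for the perturbed kernel); the hypotheses $I_G^2<\infty$ and $g'(r)=O(r^{-(n+1)})$ enter only to make $k(r)=r\abs{g'(r)}$ an admissible kernel in that lemma. With $H^{1/2}$ replaced by $H^1$ throughout and the additive form of the quadratic comparison, your assembly of the final inequality coincides with the paper's, and strictness for $E\neq B_1$ follows since the lower bound is $c\,t^2\norm{u}_{H^1}^2>0$ when $u\not\equiv0$.
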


\cref{mainthm:mindisk} is then an immediate consequence of
\crefrange{mainthm:convexmin}{mainthm:minballsph}.\medskip

\edit{The proof of \cref{mainthm:minballsph} relies on a bound (in our case, an upper bound) on the
quantity $\Per_{G_\eps}(E_t)-\Per_{G_\eps}(B_1)$ for a centered $t$-nearly spherical set $E_t$ with
$\partial E_t=\left\{(1+tu(x))~:~x\in\bbS^{n-1}\right\}$, in terms of the $L^2$ norms of $u$ and
$\grad_\tau\, u$ on the sphere. In the case of the local perimeter, this kind of control is
well-known and is originally due to \Citeauthor{Fug1989} (see \autocite[Theorem~1.2]{Fug1989}), who
proved
\[
\frac{t^2}{10}\left(\norm{u}_{L^2(\bbS^{n-1})}^2+\norm{\grad_\tau\, u}^2_{L^2(\bbS^{n-1})}\right)
\leq \frac{P(E_t)-P(B_1)}{P(B_1)}\leq
\frac{3}{5}\norm{\grad_\tau\,u}^2_{L^2(\bbS^{n-1})},
\]
provided that $t$ is small enough, depending only on $n$.
Similar results were obtained for so-called fractional perimeters $P_s$ as well as for Riesz
potentials in \autocite{FFM+2015} (see Theorems~2.1 and 8.1 therein), where the quantities are
bounded in terms of the $L^2$ norm and fractional Sobolev seminorms of $u$ on the sphere. Our
computations are inspired by the ones in \autocite{FFM+2015}, however, due to the general form of
the kernel $G$, they are more involved and quite tricky at times.}

\edit{As a last introductory remark, let us comment on the constants $t_*$, $\eps_A$, $\eps_i$,
$i\in\{1,2,3\}$ and their dependence in $\gamma$.
As expected, the constants vanish as $\gamma$ tends to $1$, and the rate at which they vanish
depends on the convergence rate of the quantity $P(B_1)-P_{G_\eps}(B_1)$ (quadratic in $\eps$) and
on the decay of the kernel $G$ at infinity. Assuming that $G(x)=O(\abs{x}^{-(n+1+\beta)})$ at
infinity, by \cref{rmk:depeps12} and \cref{rmk:depepsstar}, as $\gamma$ tends to $1$, we have}
\[
\edit{\begin{aligned}
\eps_i &\sim C(n,G)(1-\gamma)^{\max\left(\frac{3}{2},\frac{1}{2} +\frac{1}{\beta}\right)}\quad
\text{ for }i\in\{1,2\},\\
\eps_3&\sim C(n,G)(1-\gamma)^{\frac{1}{2}},\\
\eps_A&\sim C(n,G)(1-\gamma)^{\max\left(\frac{5}{2},\frac{1}{2},
+\frac{1}{\beta}\right)},\\
t_*& \sim C(n,G)(1-\gamma).
\end{aligned}}
\]

\subsection*{Outline of the paper}

The structure of the paper follows the strategy of the proof. In \cref{sec:prelim} we recall
some useful results from \autocite{Peg2021} on minimizers of \cref{minrp} and some facts on nonlocal
perimeters.
In~\cref{sec:minnearlycirc}, we show that 2D minimizers are convex and thus nearly circular
sets for small $\eps$, that is,~\cref{mainthm:convexmin} and \cref{mainthm:minnearlycirc}, where
the latter is a consequence of \cref{prp:minsph}. Finally, \cref{sec:fuglede} is dedicated to the
proof of \cref{mainthm:minballsph}, which is a consequence of \cref{prp:fuglede2}.

\subsection*{Notation}

\paragraph{\textsl{Operations on sets.}}
For any set $E\subsq\Ren$, $E^\compl\coloneqq \Ren\setminus E$ denotes its complement, $\co(E)$ its
convex hull (that is, the intersection of all convex sets containing $E$), and $\abs{E}$ its
Lebesgue measure, whenever $E$ is measurable.
We write $E\triangle F$ for the symmetric difference of $E$ and $F$, and $E\sqcup F$ for the union
of $E$ and $F$ whenever they are disjoint.

\vspace{10pt}

\paragraph{\textsl{Hausdorff measures.}}
We denote by $\calH^k$ the $k$-dimensional Hausdorff measure in $\Ren$. When integrating w.r.t. the
measure $\calH^k$ in a variable $x$, we use the notation $\dH_x^k$ instead of the more standard but
less compact $\dH^k(x)$.

\vspace{10pt}

\paragraph{\textsl{Balls and spheres.}}
We denote by $B_r(x)$ the open ball in $\Ren$ of radius $r$ centered at $x$. For brevity, we write
$B_r$ when $x$ is the origin.
The volume of $B_1$ is $\om_n\coloneqq\abs{B_1} =
\frac{\pi^{n/2}}{\Gamma\left(1+\frac{n}{2}\right)}$, and the area of the unit sphere
$\bbS^{n-1}$ is $\calH^{n-1}(\bbS^{n-1})=n\om_n$, which we also write $\abs{\bbS^{n-1}}$ for
simplicity.

\vspace{10pt}

\paragraph{\textsl{Sets of finite perimeter.}}

We denote by $\bv(\Ren)$ the space of functions with bounded variation in $\Ren$. For any $f\in
\bv(\Ren)$ we let $\abs{Df}$ be its total variation measure, and set
$\seminorm{f}_{\bv(\Ren)}\coloneqq\int_{\Ren}\, \abs{Df}$.
For a set of finite perimeter $E$ in $\Ren$, we let $\ind_E\in\bv(\Ren)$ be its characteristic
function (i.e., $\ind_E(x)=1$ if $x\in E$ and $0$ otherwise), and define its perimeter by
$P^n(E)\coloneqq \int_{\Ren}\,\abs{D\,\ind_E}$. When there can be no confusion, we may drop the
superscript and simply write $P(E)$ for the perimeter functional in $\Ren$.
We denote by~$\mu_E\coloneqq D\,\ind_E$ the Gauss--Green measure associated with the set of finite
perimeter $E$, and by~$\nu_E(x)$ the outer unit normal of $\partial^* E$ at $x$, where $\partial^*E$
stands for the reduced boundary of~$E$.
We refer to e.g.~\autocite[Chapter~5]{EG2015} or \autocite{Mag2012} for further details on functions
of bounded variations and sets of finite perimeter.

\vspace{10pt}

\addtocontents{toc}{\protect\setcounter{tocdepth}{2}}

\section{Preliminaries}\label{sec:prelim}

From now on we \edit{shall always implicitly assume that $\gamma\in(0,1)$ and that $G$ satisfies
\cref{Hrad,Hint}}.

For a general nonnegative radial kernel $K$ with finite first moment, we have the following control
of~$\Per_K$ by the perimeter, as an immediate consequence of \autocite[Proposition~3.1]{Peg2021} and
of the second expression of the nonlocal perimeter given by \cref{eq:defPG}.

\begin{prp}\label{prp:boundPerG}
\edit{Let $K:\Ren\to [0,+\infty)$ be a kernel satisfying the same assumptions \cref{Hrad,Hint} as $G$,
except that the value of its first moment is not prescribed.}
Then, for every set of finite perimeter $E$ in $\R^n$, we have
\[
\Per_{K}(E)\leq \bfK_{1,n}I_K^1 P(E).
\]
\edit{In particular, for the kernels $G_\eps$, we have}
\[
\Per_{G_\eps}(E)\leq P(E),\qquad\forall \eps>0.
\]
\end{prp}

We also have the following convergence result, which is a consequence of \autocite{Dav2002} and our
choice of $I_G^1$.

\begin{prp}\label{prp:limPerG}
For any set of finite perimeter $E$ in $\Ren$, we have
\begin{equation}\label{limPerG:res}
\Per_{G_\eps}(E)~\xrightarrow{\eps\to 0}~ P(E).
\end{equation}
\end{prp}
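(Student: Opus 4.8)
The plan is to reduce the statement to a known $\Gamma$-convergence–type result for nonlocal perimeters. First I would recall the result of Dávila \autocite{Dav2002} (in the form suited to sets of finite perimeter): if $(\rho_\delta)_{\delta>0}$ is a family of nonnegative radial mollifiers with $\int_{\Ren}\rho_\delta=1$ and concentrating at the origin in the sense that $\int_{\abs{x}>r}\rho_\delta(x)\dx\to 0$ for every $r>0$, then for every $f\in\bv(\Ren)$
\[
\iint_{\Ren\times\Ren}\frac{\abs{f(x)-f(y)}}{\abs{x-y}}\rho_\delta(x-y)\dx\dy
~\xrightarrow{\delta\to 0}~ \mathbf{K}_{1,n}\,\seminorm{f}_{\bv(\Ren)}.
\]
Specializing to $f=\ind_E$ with $E$ of finite perimeter, the left-hand side is exactly $\Per_{\wt K_\delta}(E)$ where $\wt K_\delta(x)\coloneqq \abs{x}^{-1}\rho_\delta(x)$, and $\seminorm{\ind_E}_{\bv(\Ren)}=P(E)$.

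Next I would match our rescaled kernel $G_\eps$ to this framework. Write $G_\eps(x)=\abs{x}^{-1}\rho_\eps(x)$ with $\rho_\eps(x)\coloneqq\abs{x}G_\eps(x)=\eps^{-n}\,\abs{\eps^{-1}x}\,G(\eps^{-1}x)$, so that $\rho_\eps$ is the standard $L^1$-rescaling of the fixed profile $\rho(x)\coloneqq\abs{x}G(x)$. By \cref{Hrad}, $\rho_\eps$ is nonnegative and radial; by \cref{Hint}, $\int_{\Ren}\rho_\eps=\int_{\Ren}\rho=I_G^1=\mathbf{K}_{1,n}^{-1}$, which is a fixed positive constant (not necessarily $1$, but that only rescales the limit by the same constant); and the concentration property $\int_{\abs{x}>r}\rho_\eps(x)\dx=\int_{\abs{y}>r/\eps}\rho(y)\dy\to 0$ as $\eps\to0$ holds since $\rho\in L^1(\Ren)$. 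Applying Dávila's theorem with $\delta=\eps$ then gives
\[
\Per_{G_\eps}(E)=\iint_{\Ren\times\Ren}\frac{\abs{\ind_E(x)-\ind_E(y)}}{\abs{x-y}}\rho_\eps(x-y)\dx\dy
~\xrightarrow{\eps\to 0}~ \mathbf{K}_{1,n}\cdot I_G^1\cdot P(E)=P(E),
\]
where the last equality is precisely the normalization \cref{Hint}.

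The only genuine point requiring care is that Dávila's result is usually stated for mollifiers of unit mass, whereas here the mass is the fixed constant $\mathbf{K}_{1,n}^{-1}$; this is harmless since the functional is linear in the kernel, so one simply applies the unit-mass statement to $\mathbf{K}_{1,n}\rho_\eps$ and multiplies back. A secondary technical point is checking that the hypotheses of \autocite{Dav2002} are met without assuming $\rho$ bounded or compactly supported — but only $\rho\in L^1$, nonnegativity, radiality, and the tail decay are used, all of which follow from \crefname{Hrad}{Hint}\cref{Hrad,Hint} and the finiteness of the first moment. I would also remark that, alternatively, \cref{prp:boundPerG} already yields the bound $\limsup_{\eps\to0}\Per_{G_\eps}(E)\le P(E)$, so that if one only wanted an inequality the argument would be even shorter; the matching liminf is the content of Dávila's lower bound. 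No step here is expected to be a serious obstacle — the proposition is essentially a citation plus a change of variables.
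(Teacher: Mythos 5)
Your proposal is correct and matches the paper's intended argument exactly: the paper gives no detailed proof, stating only that the result is ``a consequence of \autocite{Dav2002} and our choice of $I_G^1$,'' and your write-up simply makes that citation explicit by identifying $\rho_\eps(x)=\abs{x}G_\eps(x)$ as a (constant-mass) rescaled radial mollifier family and invoking Dávila's theorem, with the normalization \cref{Hint} cancelling the constant $\bfK_{1,n}$. The change of variables, the verification of the concentration property, and the remark that \cref{prp:boundPerG} already gives the upper bound are all consistent with the paper's setup.
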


We will use the following computation obtained in \autocite[Lemma~3.5]{Peg2021}, which clarifies
the behavior of the nonlocal perimeter under scaling.

\begin{lem}\label{lem:derivpg}
For any set of finite perimeter $E\subsq\Ren$, the function $t\mapsto \Per_{G_\eps}(tE)$ is locally
Lipschitz continuous in $(0,+\infty)$, and for almost every $t$, we have
\[
\begin{aligned}
\frac{\dd}{\dt} \left[\Per_{G_\eps}(tE)\right]
&= \frac{n}{t}\Per_{G_\eps}(tE)-\frac{1}{t}\wt{P}_{G_\eps}(tE),
\end{aligned}
\]
where $\wt{P}_{G_\eps}(E)$ is defined by 
\begin{equation}\label{defReps}
\wt{P}_{G_\eps}(E):= 2\int_E \int_{\partial^* E} G_\eps(x-y)\,(y-x)\cdot\nu_E(y)\dH^{n-1}_y\dx.
\end{equation}
\end{lem}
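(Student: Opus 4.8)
The plan is to push the whole $t$-dependence of $t\mapsto\Per_{G_\eps}(tE)$ onto the rescaling of a single globally Lipschitz auxiliary function, and then differentiate under the integral sign. Using Tonelli's theorem one first rewrites, for any set of finite perimeter $F$, $\Per_{G_\eps}(F)=2\int_{\Ren}G_\eps(z)\,V_F(z)\dz$ where $V_F(z):=\abs{F\cap(z+F^\compl)}=\tfrac12\abs{F\triangle(z+F)}$, and since $V_{tE}(z)=t^{n}V_E(z/t)$,
\[
\Per_{G_\eps}(tE)=2t^{n}\int_{\Ren}G_\eps(z)\,V_E(z/t)\dz,\qquad t>0.
\]
The key point is that $V_E$ is globally $\tfrac12P(E)$-Lipschitz with $V_E(0)=0$ — this is the classical BV estimate $\abs{F\triangle(z+F)}=\norm{\ind_F-\ind_F(\cdot-z)}_{L^1(\Ren)}\leq\abs z\,P(F)$ — hence $\abs{V_E(z)}\leq\tfrac12P(E)\abs z$ and $\abs{\grad V_E}\leq\tfrac12P(E)$ a.e., and the above integral converges because $\abs z\,G_\eps(z)\in L^1(\Ren)$ by \cref{Hint}. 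Local Lipschitz continuity of $t\mapsto\Per_{G_\eps}(tE)$ on compact subintervals of $(0,+\infty)$ then follows by integrating against $G_\eps(z)\dz$ the elementary pointwise bound $\abs{t^{n}V_E(z/t)-s^{n}V_E(z/s)}\leq C\abs z\,\abs{t-s}$ (obtained by combining $\abs{V_E(z)}\leq\tfrac12P(E)\abs z$ with the Lipschitz estimate on $V_E$), with $C$ depending only on the interval, on $n$ and on $P(E)$.

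To compute the derivative, recall $V_E$ is differentiable outside a Lebesgue-null set $N$ (Rademacher); since for each fixed $t>0$ the set $\{z\in\Ren:z/t\in N\}=tN$ is Lebesgue-null, for a.e.\ $z$ the map $t\mapsto t^{n}V_E(z/t)$ is differentiable with derivative $n t^{n-1}V_E(z/t)-t^{n-2}\,z\cdot\grad V_E(z/t)$, which is bounded by $C\abs z$ on a fixed compact subinterval. Differentiating under the integral sign, justified by dominated convergence with dominating function $C\abs z\,G_\eps(z)\in L^1(\Ren)$, we obtain, for a.e.\ $t$,
\[
\ddt\Per_{G_\eps}(tE)=\frac nt\,\Per_{G_\eps}(tE)-2t^{n-2}\int_{\Ren}G_\eps(z)\,z\cdot\grad V_E(z/t)\dz,
\]
the first term because $2nt^{n-1}\int_{\Ren}G_\eps(z)V_E(z/t)\dz=\tfrac nt\,\Per_{G_\eps}(tE)$.

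It remains to recognise the last integral as $\tfrac1t\wt{P}_{G_\eps}(tE)$. For this I would use that, differentiating $V_E(z)=\int_{\Ren}\ind_E(x)\ind_{E^\compl}(x-z)\dx$ in $z$ and applying the Gauss--Green theorem, the distributional gradient of $V_E$ is represented by the bounded function $\grad V_E(z)=-\int_{\partial^*E}\ind_E(y+z)\,\nu_E(y)\dH^{n-1}_y$ for a.e.\ $z$. Inserting this, the order of integration may be exchanged by Tonelli — legitimate since $\int_{\partial^*E}\int_{\Ren}G_\eps(z)\abs z\dz\dH^{n-1}_y=I_{G_\eps}^1\,P(E)<\infty$, using $\calH^{n-1}(\partial^*E)=P(E)$ — and the change of variables $x=y+z/t$ turns $2t^{n-2}\int_{\Ren}G_\eps(z)\,z\cdot\grad V_E(z/t)\dz$ into $2t^{2n-1}\int_E\int_{\partial^*E}G_\eps(t(x-y))\,(y-x)\cdot\nu_E(y)\dH^{n-1}_y\dx$, which equals $\tfrac1t\wt{P}_{G_\eps}(tE)$ after the substitution $(x,y)\mapsto(tx,ty)$ in \cref{defReps}. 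Combining the two displays gives the asserted identity.

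The step I expect to be the main obstacle is this last identification: one has to establish the a.e.\ pointwise representation of $\grad V_E$ as a surface integral over the reduced boundary (not merely its distributional form) and carefully justify interchanging the $n$-dimensional integral in $z$ with the $(n-1)$-dimensional integral over $\partial^*E$; everything else is routine measure theory once the Lipschitz function $V_E$ has been introduced. An alternative route would be to prove the identity first when $G_\eps$ is smooth and compactly supported and $E$ has smooth boundary — where all of the above becomes classical differentiation under the integral sign together with the divergence theorem applied in the variable $y$ over $E^\compl$ — and then pass to the limit; but verifying continuity of $\wt{P}_{G_\eps}$ along such approximations (necessarily in a norm weighted by $\abs z$, since $G_\eps$ need not be integrable) is of comparable difficulty.
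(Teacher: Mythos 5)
Your argument is correct, but it is a genuinely different route from the paper's. The paper does not prove \cref{lem:derivpg} directly: it invokes \autocite[Lemma~3.5]{Peg2020}, where the identity is established under the extra hypothesis $G\in L^1(\Ren)$, and then removes that hypothesis by truncating $G$ near the origin ($G_k=\chi_k(\abs{\cdot})G$) and passing to the limit using the two displayed stability bounds $\abs{\Per_G(E)-\Per_{G_k}(E)}\leq P(E)\int\abs{x}\abs{(G-G_k)(x)}\dx$ and the analogous one for $\wt{P}_G$. You instead give a self-contained proof: writing $\Per_{G_\eps}(tE)=2t^n\int G_\eps(z)V_E(z/t)\dz$ with $V_E(z)=\abs{E\setminus(z+E)}$ Lipschitz, and differentiating under the integral sign with the dominating function $C\abs{z}G_\eps(z)$, which only uses the finiteness of the first moment and so needs no truncation at all. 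The one step you flag as delicate — the a.e.\ identity $\grad V_E(z)=-\int_{\partial^*E}\ind_E(y+z)\,\nu_E(y)\dH^{n-1}_y$ — is in fact true and routine: pairing $V_E$ with $\partial_i\varphi$, using $D\ind_E=-\nu_E\,\calH^{n-1}\res\partial^*E$ against the smooth translate $\varphi(x-\cdot)$ and Fubini shows that this bounded Borel function represents $DV_E$ distributionally, and since $V_E$ is Lipschitz its distributional gradient coincides a.e.\ with the Rademacher gradient; the subsequent Tonelli exchange is covered by your bound $I^1_{G_\eps}P(E)<\infty$. Two harmless quibbles: the identity $V_E(z)=\tfrac12\abs{E\triangle(z+E)}$ uses $\abs{E}<\infty$, whereas for a general set of finite perimeter one should simply use $V_E(z)\leq\abs{E\triangle(z+E)}\leq\abs{z}P(E)$ (a factor $2$ that changes nothing); and your argument actually yields differentiability for \emph{every} $t>0$, which is stronger than the stated a.e.\ conclusion.
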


Let us remark that in \autocite{Peg2021}, $G$ is assumed to be in addition integrable in $\Ren$, but
\cref{lem:derivpg} can be deduced by approximating $G$ with maps $G_k:x\mapsto
\chi_k(\abs{x})G(x) \in L^1(\Ren)$. Indeed, \mbox{let $\chi_k\in C^\infty(\R^+,[0,1])$} be cutoff
functions with $\chi_k(r)=0$ for $r\leq 1/k$, $\chi_k(r)=1$ for $r\geq \frac{2}{k}$, so
that~$I_{G_k}^1\leq I_G^1$, $x\mapsto \abs{x}G_k(x)$ converges to $x\mapsto \abs{x}G(x)$ in
$L^1(\Ren)$, and notice that
\[
\abs{\Per_G(E)-\Per_{G_k}(E)} \leq P(E)\int_{\Ren} \abs{x}\abs{(G-G_k)(x)}\dx,
\]
and
\[
\abs{\wt{\Per}_G(E)-\wt{\Per}_{G_k}(E)} \leq 2P(E)\int_{\Ren} \abs{x}\abs{(G-G_k)(x)}\dx.
\]
In order to study the minimality of the unit ball among nearly spherical sets, we will use the
following Bourgain-Brezis-Mironescu-type result (see \autocite{BBM2001}) for approximating the $H^1$
seminorm on the sphere by nonlocal seminorms.

\begin{lem}\label{lem:cvsphunif}
Let us define the $(n-1)$-dimensional approximation of identity $(\eta_\eps)_{\eps>0}$ by
\[
\eta(t)\coloneqq 2t^2g(t),\qquad\text{ and }\qquad
\eta_\eps(t)\coloneqq\eps^{-(n-1)}\eta(\eps^{-1}t), \qquad\forall t>0,~\forall \eps>0.
\]
When $n=2$, we assume in addition that $g$ is such that the family $(\eta_\eps)_{\eps>0}$ satisfies
\begin{equation}\label{eq:Tcond}
\sup_{r\in (R,2)} \eta_\eps(r)\,\xrightarrow{\eps\to 0}\, 0,\qquad\forall R\in(0,2).
\end{equation}
Then for any $u\in H^1(\bbS^{n-1})$, we have
\[
\begin{aligned}
\iint_{\bbS^{n-1}\times\bbS^{n-1}} \frac{(u(x)-u(y))^2}{\abs{x-y}^2} \eta_\eps(\abs{x-y})
\dH_x^{n-1}\dH_y^{n-1}
\leq \big(1+q_\eta(\eps)\big) \int_{\bbS^{n-1}} \abs{\grad_\tau\, u}^2\dH^{n-1},
\end{aligned}
\]
where $q_\eta(\eps)$ vanishes as $\eps$ goes to $0$, and depends only on $n$ and $G$. In addition,
for any $u\in H^1(\bbS^{n-1})$,
\[
\begin{aligned}
\iint_{\bbS^{n-1}\times\bbS^{n-1}} \frac{(u(x)-u(y))^2}{\abs{x-y}^2} \eta_\eps(\abs{x-y})
\dH_x^{n-1}\dH_y^{n-1}
\,\xrightarrow{\eps\to 0}\, \int_{\bbS^{n-1}} \abs{\grad_\tau\, u}^2\dH^{n-1}.
\end{aligned}
\]
\end{lem}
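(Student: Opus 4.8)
The plan is to prove the Bourgain–Brezis–Mironescu-type statement on $\bbS^{n-1}$ by reducing it to a pointwise analysis near the diagonal, where the geometry of the sphere locally looks Euclidean, and then invoking the classical BBM theorem together with a careful control of the ``far'' contribution. I would first record that $(\eta_\eps)_{\eps>0}$ is an approximate identity on $\R^{n-1}$ in the BBM sense: by the normalization \cref{Hint}, $\int_{\R^{n-1}} |z|^{-(n-1)}\eta_\eps(|z|)\dz$ — after passing to polar coordinates in $\R^{n-1}$ — reduces to a multiple of the first moment $I_G^1$ of $G$ in $\R^n$, hence is a fixed constant independent of $\eps$; moreover, $\eta_\eps$ concentrates at $0$ since $\mathrm{supp}\,\eta_\eps \subset (0, 2\eps)$ away from a fixed neighborhood of the origin, which is exactly the content of \cref{eq:Tcond} when $n=2$ (and is automatic when $n \geq 3$ because of integrability). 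Then the two assertions of the lemma become, respectively, a quantitative upper bound and a full limit for a BBM-type quotient on a compact Riemannian manifold.

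\textbf{Main steps.} First, split the double integral over $\bbS^{n-1}\times\bbS^{n-1}$ into a ``near-diagonal'' piece $|x-y| < \delta$ and a ``far'' piece $|x-y| \geq \delta$ for a small fixed $\delta$. For the far piece, since $u \in H^1(\bbS^{n-1}) \subset L^2(\bbS^{n-1})$, the quotient $(u(x)-u(y))^2/|x-y|^2$ is controlled by $\delta^{-2}\big(u(x)^2 + u(y)^2\big)$, and $\sup_{r \in (\delta, 2)}\eta_\eps(r) \to 0$ by \cref{eq:Tcond} (resp. by the tail decay of $\eta_\eps$ in higher dimensions), so the far piece tends to $0$ as $\eps \to 0$; this also handles the error term in the upper bound, contributing to $q_\eta(\eps)$. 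For the near-diagonal piece, I would use normal (geodesic) coordinates: fix $x$, parametrize $y$ near $x$ by $z \in \R^{n-1}$ via the exponential map, and use the expansions $\mathrm{dist}_{\bbS}(x,y) = |z|(1+O(|z|^2))$, $|x-y|_{\R^n} = |z|(1 + O(|z|^2))$, and $\dH^{n-1}_y = (1 + O(|z|^2))\dz$. After these substitutions, the near-diagonal integral becomes, up to multiplicative factors $1 + O(\delta^2)$, the Euclidean BBM quotient $\iint |u(x+z)-u(x)|^2 |z|^{-2}\eta_\eps(|z|)\dz\,\dH^{n-1}_x$ on the manifold, which by the standard BBM theorem converges to $C_{n-1}\int_{\bbS^{n-1}}|\grad_\tau u|^2\dH^{n-1}$, where $C_{n-1}$ — because of the chosen normalization of $\eta_\eps$ — is exactly $1$. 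Combining, the lim sup is at most $(1 + O(\delta^2))\int_{\bbS^{n-1}}|\grad_\tau u|^2$ for every $\delta$, giving the full limit; the quantitative bound $(1 + q_\eta(\eps))$ follows by keeping track of the rate at which the far piece vanishes and the $O(\delta^2)$ geometric errors, optimizing $\delta = \delta(\eps) \to 0$ slowly.

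\textbf{The delicate point} will be establishing the \emph{non-asymptotic} upper bound with a clean constant $1$ (not merely $1 + o(1)$ from soft compactness), since the classical BBM statement is usually phrased as a limit rather than a one-sided inequality with explicit remainder. To get this I would prove the near-diagonal inequality directly: writing $u(x+z) - u(x) = \int_0^1 \grad_\tau u(x + sz)\cdot z\, \ds$ along geodesics, Jensen's inequality gives $(u(x+z)-u(x))^2 \leq |z|^2 \int_0^1 |\grad_\tau u(x+sz)|^2\ds$ up to the geometric distortion factors; integrating in $z$ against $|z|^{-2}\eta_\eps(|z|)$ and in $x$, then using Fubini and the fact that $\int |z|^{-(n-1)}\eta_\eps(|z|)\cdot(\text{angular average of }|\cdot|^2/|z|^2)\dz$ equals $1$ by \cref{Hint}, yields the bound with constant $1 + O(\delta^2) + (\text{far term})$. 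The far term and the $O(\delta^2)$ errors are absorbed into $q_\eta(\eps)$ after choosing $\delta(\eps)$ appropriately, and one checks directly from the definitions that $q_\eta$ depends only on $n$ and $G$ (through $g$ and the modulus in \cref{eq:Tcond}). The convergence statement then follows from this upper bound plus the matching liminf lower bound, which is the easy half of BBM (lower semicontinuity under the concentration of $\eta_\eps$, obtained by restricting to smooth $u$ and density).
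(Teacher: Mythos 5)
Your strategy is mathematically sound and would yield a complete proof, but it is not the route the paper takes: the paper's proof of \cref{lem:cvsphunif} consists solely of checking that $(\eta_\eps)_{\eps>0}$ satisfies the two defining properties of an $(n-1)$-dimensional approximation of identity, namely $\abs{\bbS^{n-2}}\int_0^{+\infty}\eta_\eps(r)r^{n-2}\dr=\frac{1}{\bfK_{2,n-1}}=n-1$ and $\int_\delta^{+\infty}\eta_\eps(r)r^{n-2}\dr\to 0$ for every $\delta>0$, and then citing Propositions~A.1 and~A.4 of \autocite{Peg2020}, where the localization-plus-BBM argument you sketch is carried out. You are therefore re-proving the black box rather than invoking it; this buys self-containedness at the cost of redoing a known argument. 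Two slips in your write-up should be corrected. First, $\eta_\eps$ is \emph{not} supported in $(0,2\eps)$: $G$ need not be compactly supported (Bessel kernels are not), and concentration comes only from the vanishing of the tail integral above, which follows from $I_G^1<\infty$ by rescaling and dominated convergence. Second, your normalization integral carries a spurious factor $\abs{z}^{-(n-1)}$: the quantity that is $\eps$-independent and produces the sharp constant $1$ is
\[
\int_{\R^{n-1}}\frac{(e\cdot z)^2}{\abs{z}^2}\,\eta_\eps(\abs{z})\dz
=\bfK_{2,n-1}\,\abs{\bbS^{n-2}}\int_0^{+\infty}\eta_\eps(r)\,r^{n-2}\dr=1,
\]
whereas $\int_{\R^{n-1}}\abs{z}^{-(n-1)}\eta_\eps(\abs{z})\dz$ scales like $\eps^{-(n-1)}$ and may even be infinite. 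Finally, it is worth making explicit why \cref{eq:Tcond} is needed only for $n=2$: writing the spherical measure in terms of the chordal distance $r=\abs{x-y}$ gives a Jacobian proportional to $r^{n-2}(1-r^2/4)^{(n-3)/2}$, which blows up at antipodal points precisely when $n=2$; your far-field bound via $\sup_{r\in(\delta,2)}\eta_\eps(r)$ handles this correctly, while for $n\geq3$ the bounded Jacobian lets the integral tail decay suffice, exactly as your parenthetical suggests.
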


\begin{proof}
One easily checks that assumptions \cref{Hrad,Hint} ensure that the family $(\eta_\eps)_{\eps>0}$ is
a $(n-1)$-dimensional approximation of identity, up to multiplication by the constant
$\bfK_{2,n-1}=1/(n-1)$, i.e.,
\begin{enumerate}[label=(\roman*),parsep=0pt,itemsep=0.5pt]
\begin{multicols}{2}
\item $\displaystyle \abs{\bbS^{n-2}}\int_0^{+\infty} \eta_{\eps}(r)r^{n-2}\dr =
\frac{1}{\bfK_{2,n-1}}$; \item$\displaystyle\lim_{\eps \to 0}\, \int_\delta^{+\infty}
\eta_{\eps}(r)r^{n-2} \dr= 0,\quad \forall
\delta > 0$.
\end{multicols}
\end{enumerate}
These properties (together with \cref{eq:Tcond} when $n=2$) allow us to apply
\autocite[Propositions~A.1~\&~A.4]{Peg2021}, which gives the result.
\end{proof}

\begin{rmk}\label{rmk:condTHinfty}
If \cref{Hhighermoments} stands true, then the condition \cref{eq:Tcond} is satisfied, in particular
when $n=2$. Indeed, with \cref{Hint}, \cref{Hhighermoments} implies that $g\in
W^{1,1}_\loc(0,+\infty)$ and the functions $t\mapsto t^ng(t)$ and $t\mapsto t^{n+1}g'(t)$ are
integrable on $(0,+\infty)$. In addition, integrating the function $(t^{n+1}g(t))'$ between $r$ and
$R$, we have the relation
\[
R^{n+1}g(R)-r^{n+1}g(r)=(n+1)\int_r^R t^ng(t)\dt+\int_r^R t^{n+1}g'(t)\dt.
\]
Since $t^ng(t)$ and $t^{n+1}g'(t)$ are integrable on $(0,+\infty)$, this implies that $r^{n+1}g(r)$
has a limit in $0^+$ and at infinity.
By the integrability of $t^n g(t)$ on $(0,+\infty)$, these limits are necessarily $0$. In
particular, $r^{n+1}g(r)\xrightarrow{r\to\infty} 0$, so that
\[
\eta_\eps(r)=r^2\eps^{-(n+1)}g(\eps^{-1}r)=r^{1-n}(\eps^{-1}r)^{-(n+1)}g(\eps^{-1}r)
\]
vanishes uniformly on $(R,2)$ as $\eps\to 0$, for every $R\in(0,2)$.
\end{rmk}

Eventually, gathering results from \autocite[Theorems~A and B]{Peg2021} (see also Theorem~4.16
therein), we have existence and convergence results for minimizers of \cref{minrp}.
We also know that minimizers are connected for small $\eps$. Here connectedness is to be understood
in a measure-theoretic sense for sets of finite perimeter, often referred to as indecomposability,
as defined below (see \autocite{ACMM2001}).

\begin{dfn}
We say that a set of finite perimeter $E$ is decomposable if there exist two sets of finite
perimeter $E_1$ and $E_2$ such that $E=E_1\sqcup E_2$, $\abs{E_1}>0$, $\abs{E_2}>0$ and
$P(E)=P(E_1)+P(E_2)$. Naturally, we say that a set of finite perimeter is indecomposable if it is
not decomposable.
\end{dfn}

Let us remark that by \autocite[Theorem~2]{ACMM2001}, the notion of connectedness and
indecomposability coincide whenever $E$ is an \textsl{open} set of finite perimeter such that
$\calH^{n-1}(\partial E)=\calH^{n-1}(\partial^* E)$.

\begin{thm}\label{thm:existmin}
There exist $\eps_0=\eps_0(n,G,\gamma)>0$ and a function
$\delta=\delta(n,G,\gamma):(0,+\infty)\to (0,1/4)$ vanishing in $0^+$ such that the
following holds. For every $0<\eps<\eps_0$, \cref{minrp} admits a minimizer. In addition, any such
minimizer $E_\eps$ is indecomposable, and up to a translation and a Lebesgue-negligible set, it
satisfies
\begin{equation}\label{existmin:doubleincl}
B_{1-\delta(\eps)} \subsq E_\eps \subsq B_{1+\delta(\eps)}.
\end{equation}
In dimension $n=2$, any minimizer $E_\eps$ with $0<\eps<\eps_0$ is Lebesgue-equivalent to a
connected set which still satisfies \cref{existmin:doubleincl}.
\end{thm}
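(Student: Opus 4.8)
\cref{thm:existmin} assembles \autocite[Theorems~A and~B]{Peg2020} (and Theorem~4.16 there) under the present hypotheses \cref{Hrad,Hint}, so the plan is to reconstruct the argument of \autocite{Peg2020}. Two structural facts drive everything: \cref{prp:boundPerG}, which gives $\calF_{\gamma,G_\eps}(E)\geq(1-\gamma)P(E)\geq0$ for every set of finite perimeter $E$, and \cref{prp:limPerG}, which makes the unit ball an almost-optimal competitor once $\eps$ is small, namely $\calF_{\gamma,G_\eps}(B_1)=(1-\gamma)P(B_1)+\gamma\bigl(P(B_1)-\Per_{G_\eps}(B_1)\bigr)\to(1-\gamma)P(B_1)$ as $\eps\to0$. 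We also record, for a disjoint decomposition $E=E^1\sqcup E^2$ with $P(E)=P(E^1)+P(E^2)$, the identity
\[
\calF_{\gamma,G_\eps}(E^1\sqcup E^2)=\calF_{\gamma,G_\eps}(E^1)+\calF_{\gamma,G_\eps}(E^2)+4\gamma\iint_{E^1\times E^2}G_\eps(x-y)\dx\dy\geq\calF_{\gamma,G_\eps}(E^1),
\]
so that discarding a piece never raises the energy.

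\textbf{Existence.} Let $(E_k)$ be minimizing for \cref{minrp}. The first fact bounds $P(E_k)$, hence up to a subsequence and translations $\ind_{E_k}\to\ind_E$ in $L^1_\loc(\Ren)$ with $\abs{E}\leq\abs{B_1}$. To exclude loss of mass at infinity one establishes a truncation lemma in the spirit of \autocite{KM2013}: if some small mass $\mu$ of $E_k$ sits outside a large ball, then cutting it off (at a radius where the slicing cost is small, by a coarea argument) and rescaling the remainder to volume $\abs{B_1}$ strictly decreases $\calF_{\gamma,G_\eps}$ once $\eps<\eps_0$, the point being that the perimeter drops by an amount comparable to $\mu^{(n-1)/n}$ whereas the rescaling only costs $O(\mu)$ --- here \cref{lem:derivpg} and the calibration \cref{Hint} control the variation of $\Per_{G_\eps}$ under rescaling, and the identity above absorbs the discarded part. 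Thus the minimizing sequence may be chosen with supports in a fixed ball, whence $\ind_{E_k}\to\ind_E$ in $L^1(\Ren)$ and $\abs{E}=\abs{B_1}$; since then $\Per_{G_\eps}(E_k)\to\Per_{G_\eps}(E)$ by dominated convergence while $P$ is lower semicontinuous, $E$ is a minimizer.

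\textbf{Double inclusion.} For a minimizer $E_\eps$, comparison with $B_1$ and the first fact give $(1-\gamma)P(E_\eps)\leq\calF_{\gamma,G_\eps}(E_\eps)\leq\calF_{\gamma,G_\eps}(B_1)$; combined with the second fact and the isoperimetric inequality $P(E_\eps)\geq P(B_1)$ this forces $P(E_\eps)\to P(B_1)$ as $\eps\to0$, i.e.\ the isoperimetric deficit of $E_\eps$ vanishes. By the quantitative isoperimetric inequality, $\abs{E_\eps\triangle B_1(x_\eps)}\to0$ for suitable centers $x_\eps$, which we normalize to $0$ by translating. To promote this $L^1$-proximity to \cref{existmin:doubleincl}, one checks that minimizers are $\Lambda$-minimizers of the perimeter under the volume constraint with $\Lambda$ independent of $\eps$ --- localized perturbations change $\Per_{G_\eps}$ by a controlled amount, which again uses \cref{prp:boundPerG} and \cref{Hint} --- so the standard density estimates hold uniformly in $\eps$ and turn the $L^1$-convergence into Hausdorff convergence of $\partial E_\eps$ to $\partial B_1$; this is exactly \cref{existmin:doubleincl} with some $\delta(\eps)\to0$, and shrinking $\eps_0$ ensures $\delta(\eps)<\tfrac14$.

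\textbf{Indecomposability and the case $n=2$.} Suppose a minimizer $E_\eps$ with $\eps<\eps_0$ decomposes as $E^1\sqcup E^2$ with $P(E_\eps)=P(E^1)+P(E^2)$ and $\abs{E^1},\abs{E^2}>0$. The ball $B_{1-\delta(\eps)}\subseteq E_\eps$ is indecomposable, hence lies (up to a null set) entirely in one piece, say $E^1$; then $E^2\subseteq E_\eps\setminus B_{1-\delta(\eps)}\subseteq B_{1+\delta(\eps)}\setminus B_{1-\delta(\eps)}$, so $\abs{E^2}\to0$ as $\eps\to0$. For $\eps$ small this mass is small enough that discarding $E^2$ and rescaling $E^1$ up to volume $\abs{B_1}$ strictly lowers $\calF_{\gamma,G_\eps}$ (the same estimate as in the existence step, now with no slicing cost since $P(E_\eps)=P(E^1)+P(E^2)$), contradicting minimality; hence $E_\eps$ is indecomposable. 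For $n=2$, regularity theory for the perimeter $\Lambda$-minimizer $E_\eps$ gives a representative that is open with $\calH^1(\partial E_\eps)=\calH^1(\partial^*E_\eps)$, so by \autocite[Theorem~2]{ACMM2001} indecomposability coincides with connectedness, and this representative still satisfies \cref{existmin:doubleincl}. The main obstacle throughout is the truncation lemma --- quantifying the energy cost of removing a small piece and rescaling, and showing the perimeter gain beats it for $\eps<\eps_0$, which is precisely where the normalization of the first moment in \cref{Hint} is used.
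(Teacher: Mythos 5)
The paper's own proof is essentially a citation: it invokes \autocite[Theorems~A and B]{Peg2020} wholesale, only explaining why the integrability hypothesis $G\in L^1(\Ren)$ made there can be dropped (it was used solely for the equivalence with \cref{mingam} and for the a priori $C^{1,\frac12}$ partial regularity of minimizers, neither of which is needed), and then handles the $n=2$ connectedness via \autocite[Theorem~8]{ACMM2001}, which asserts that for \emph{any} indecomposable planar set of finite perimeter the set $\mathring{E}^M_\eps\setminus\partial^S E_\eps$ is connected --- no regularity of minimizers enters. Your proposal instead reconstructs the content of \autocite{Peg2020}; the existence and indecomposability parts (truncation lemma, the decomposition identity for $\calF_{\gamma,G_\eps}$, discarding a small component and rescaling via \cref{lem:derivpg}) are sound in outline and consistent with that reference.

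There is, however, a genuine gap in your route to \cref{existmin:doubleincl} and, downstream, to the $n=2$ statement. You claim minimizers are volume-constrained $\Lambda$-minimizers of the perimeter with $\Lambda$ independent of $\eps$, ``which again uses \cref{prp:boundPerG} and \cref{Hint}''. But \cref{prp:boundPerG} bounds $\Per_{G_\eps}(F)$ by $P(F)$, i.e.\ by a \emph{perimeter}, not by a volume; $\Lambda$-minimality requires an estimate of the form $\abs{\Per_{G_\eps}(E)-\Per_{G_\eps}(F)}\leq \Lambda\abs{E\triangle F}$, which is exactly what the rewriting \cref{eq:equivgamow} provides when $G\in L^1(\Ren)$ and which is unavailable here (the paper states explicitly that it does not use quasi-minimality for this reason). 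What does survive without integrability are uniform density estimates obtained by direct comparison of $E_\eps$ with $E_\eps\setminus B_r(x)$ and $E_\eps\cup B_r(x)$, using $\calF_{\gamma,G_\eps}\geq(1-\gamma)P$ to absorb the nonlocal error $\Per_{G_\eps}(E_\eps\triangle F)\leq P(E_\eps\triangle F)$ into the perimeter gain (cf.\ the paper's remark pointing to \autocite[Lemma~5.6]{MS2019}); these, together with the vanishing isoperimetric deficit, yield \cref{existmin:doubleincl}. The same issue infects your $n=2$ step: you invoke regularity theory for $\Lambda$-minimizers to produce an open representative and then apply \autocite[Theorem~2]{ACMM2001}, whereas the regularity is precisely what is not available; the correct (and regularity-free) argument is the one via \autocite[Theorem~8]{ACMM2001} sketched above, noting that $\calH^1(\partial^S E_\eps\setminus\partial^*E_\eps)=0$ and that $\mathring{E}^M_\eps\setminus\partial^S E_\eps$ is Lebesgue-equivalent to $E_\eps$ and still sits between $B_{1-\delta(\eps)}$ and $B_{1+\delta(\eps)}$.
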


\begin{proof}
In \autocite{Peg2021}, the kernel $G$ is assumed to be integrable in $\Ren$. However, it
is actually only required for the two following reasons: first, to be able to write
\cref{eq:equivgamow} and obtain the equivalence with the Gamow-type minimization problem
\cref{mingam}; second, by this equivalence, to deduce that minimizers of~\cref{minrp} are so-called
quasi-minimizers of the perimeter, and thus are (non-uniformly in $\eps$)
$C^{1,1/2}$-regular outside a \enquote{small} singular set. Here, we do not need the
equivalence with \cref{mingam} nor the \textsl{a priori} regularity of minimizers.
In the end, apart from the $C^{1,1/2}$ partial regularity of minimizers, all the conclusions
of \autocite[Theorems~A and B]{Peg2021} follow. More precisely, there exists
$\eps_0=\eps_0(n,G,\gamma)$ such that, for any~$0<\eps<\eps_0$, \cref{minrp} admits a minimizer.
In addition, any such minimizer $E_\eps$ is indecomposable and, up to a translation and
Lebesgue-negligible set, it satisfies
\[
B_{1-\delta(\eps)} \subsq E_\eps \subsq B_{1+\delta(\eps)},
\]
where $\delta:(0,+\infty)\to(0,1/4)$ is a function depending only on $n$, $G$, and $\gamma$
vanishing in $0^+$.

To conclude, there remains to show that in dimension $n=2$, $E_\eps$ is equivalent to a connected
set with~\cref{existmin:doubleincl}, that is, to link the indecomposability of $E_\eps$ with the
topological notion of connectedness. It is not a trivial question, at least without (weak)
regularity results on minimizers.
However, \autocite[Theorem~8]{ACMM2001} shows that in dimension $2$, $\wt{E}_\eps\coloneqq
\mathring{E}^M_\eps\setminus\partial^S E_\eps$ is connected, where $\mathring{E}^M_\eps$ is the
measure-theoretic interior of $E_\eps$, and
\[
\partial^S E_\eps\coloneqq \left\{ x\in\R^2~:~ \limsup_{r\to 0^+}~ \frac{\calH^{1}(\partial^* E_\eps
\cap B_r(x))}{r}>0\right\}.
\]
Since $\calH^{1}(\partial^S E_\eps\setminus\partial^* E_\eps)=0$ and
$\calL^2(E_\eps\triangle \mathring{E}_\eps^M)=0$, we have $\calL^2(E_\eps\triangle\wt{E}_\eps)=0$,
and since $B_{1-\delta(\eps)}\subsq\mathring{E}^M_\eps\subsq B_{1+\delta(\eps)}$ and $\partial^S
E_\eps \subsq \ol{B}_{1+\delta(\eps)}\setminus B_{1-\delta(\eps)}$, $\wt{E}_\eps$ satisfies
\cref{existmin:doubleincl}.
\end{proof}

\begin{rmk}
In higher dimensions $n\geq 3$, one could show as well that any minimizer of \cref{minrp} is
equivalent to a connected set for small $\eps$, without assuming that $G\in L^1(\Ren)$. Indeed,
proceeding e.g. as in \autocite[Lemma~5.6]{MS2019}, it is possible to obtain \textsl{uniform}
(w.r.t. $\eps$) density estimates for minimizers, and with those to deduce that any minimizer is
equivalent to an open set $E_\eps$ such that $\partial E_\eps=\spt \mu_\eps$. The indecomposability
of $E_\eps$ then implies connectedness by \autocite{ACMM2001}.
\end{rmk}

\begin{rmk}\label{rmk:distball}
In view of \autocite[Lemma~4.1]{Peg2021} and the proofs of Section~4.1 therein, we see that
\[
\delta(\eps)^2 = C(n)\left(\frac{\gamma}{1-\gamma}\right)\big(P(B_1)-\Per_{G_\eps}(B_1)\big),
\]
and that $\eps_0$ is actually chosen so that $\delta(\eps)\leq C(n)$ for every $0<\eps<\eps_0$, for
some dimensional constant~$C(n)$.
A computation shows that $P(B_1)-\Per_{G_\eps}(B_1)\sim C(n,G)\eps^2$ as $\eps$ vanishes, thus
\[
\delta(\eps)^2 \sim C(n,G)\left(\frac{\gamma}{1-\gamma}\right)\eps^2,
\]
and we can choose 
\[
\eps_0=C(n,G)\left(\frac{1-\gamma}{\gamma}\right)^{\frac{1}{2}}.
\]
Hence, the closer $\gamma$ is to $1$, the smaller $\eps_0$ is.
\end{rmk}


%
%

\section{Minimizers are nearly circular sets in dimension \texorpdfstring{$2$}{2}}
\label{sec:minnearlycirc}


\subsection{Decrease of the critical energy by convexification}\label{subsec:convexification}

We can recover the nonlocal perimeter of a measurable set $E\subsq\Ren$ by integrating the
$1$-dimensional nonlocal perimeter of all $1$-dimensional slices of $E$ in a given direction, and
averaging over all the directions.

\begin{prp}
For any measurable set $E\subsq\edit{\R^{n}}$, we have
\[
\begin{aligned}
\Per_{G_\eps}(E)
&=\int_{\bbS^{n-1}} \int_{\{\sigma\}^\perp} \left(\iint_{E_{\sigma,y}\times
\left(\R\setminus E_{\sigma,y}\right)} \abs{s-t}^{\edit{n-1}}g_\eps(\abs{s-t})\ds\dt\right)
\dH^{n-1}_y\dH^{n-1}_\sigma\\
&=\frac{1}{2\om_{n-1}}\int_{\bbS^{n-1}}\left(\int_{\{\sigma\}^\perp}
\Per^{1}_{\rho_\eps}(E_{\sigma,y}) \dH^{n-1}_y\right)\dH^{n-1}_\sigma,
\end{aligned}
\]
where \edit{$g_\eps\coloneqq\eps^{-(n+1)}g(\eps^{-1}\,\cdot)$},
\begin{equation}\label{eq:defEst}
E_{\sigma,y} := \Big\{ s\in\R~:~ y+s\sigma\in E\Big\},
\end{equation}
$\om_{n-1}$ is the volume of the unit ball in $\R^{n-1}$, and $\Per^{1}_{\rho_\eps}$ is the
$1$-dimensional nonlocal perimeter in $\R$ associated with the kernel $\rho_\eps$ defined by
$\rho_\eps(r)\coloneqq \om_{n-1}\abs{r}^{n-1}g_\eps(\abs{r})$ for $r\in\R\setminus\{0\}$, that is,
\[
\Per^1_{\rho_\eps}(J) \coloneqq \iint_{\R\times\R} \abs{\ind_J(s)-\ind_J(t)} \rho_\eps(s-t)\ds\dt,
\]
for every measurable set $J\subsq\R$.
\end{prp}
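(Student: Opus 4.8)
The plan is to unfold the double integral defining $\Per_{G_\eps}(E)$ by first changing to the difference variable $z=x-y$, then passing to polar coordinates in $z$, and finally applying Fubini along the orthogonal splitting $\Ren=\{\sigma\}^\perp\oplus\R\sigma$ attached to each direction $\sigma\in\bbS^{n-1}$. Since the integrand is nonnegative, Tonelli's theorem licenses every manipulation below with no integrability check, and measurability of the slices $E_{\sigma,y}$ for $\calH^{n-1}$-a.e.\ $\sigma$ and $\calH^{n-1}$-a.e.\ $y$ follows from Fubini.

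First I would start from the second expression for $\Per_{G_\eps}$ in \cref{eq:defPG}, use \cref{Hrad} to write $G_\eps(x-y)=g_\eps(\abs{x-y})$ with $g_\eps(r)\coloneqq\eps^{-(n+1)}g(\eps^{-1}r)$, and substitute $x=y+z$ (unit Jacobian at fixed $y$) to get
\[
\Per_{G_\eps}(E)=\iint_{\Ren\times\Ren}\abs{\ind_E(y+z)-\ind_E(y)}\,g_\eps(\abs{z})\dz\dy .
\]
Polar coordinates $z=r\sigma$, with $\dz=r^{n-1}\dr\dH^{n-1}_\sigma$, followed for each fixed $\sigma$ by the orthonormal decomposition $y=y'+s\sigma$ with $y'\in\{\sigma\}^\perp$, $s\in\R$ (so that $\dy=\ds\dH^{n-1}_{y'}$), turn the integrand into $\abs{\ind_{E_{\sigma,y'}}(s+r)-\ind_{E_{\sigma,y'}}(s)}$, where $E_{\sigma,y'}$ is the slice of \cref{eq:defEst}. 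Thus
\[
\Per_{G_\eps}(E)=\int_{\bbS^{n-1}}\int_{\{\sigma\}^\perp}\left(\int_0^{+\infty}\!\!\int_\R r^{n-1}g_\eps(r)\,\abs{\ind_{E_{\sigma,y'}}(s+r)-\ind_{E_{\sigma,y'}}(s)}\ds\dr\right)\dH^{n-1}_{y'}\dH^{n-1}_\sigma .
\]

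It then remains to recognise the inner integral as a $1$-dimensional nonlocal perimeter. For a fixed measurable $J\subseteq\R$, the substitution $t=s+r$ is a bijection between $\{r>0\}$ and $\{s<t\}$ with unit Jacobian and $r=\abs{t-s}$, so $\int_0^{+\infty}\int_\R r^{n-1}g_\eps(r)\abs{\ind_J(s+r)-\ind_J(s)}\ds\dr=\iint_{\{s<t\}}\abs{t-s}^{n-1}g_\eps(\abs{t-s})\abs{\ind_J(t)-\ind_J(s)}\ds\dt$; the integrand being symmetric in $(s,t)$ this equals $\tfrac{1}{2}\iint_{\R\times\R}\abs{s-t}^{n-1}g_\eps(\abs{s-t})\abs{\ind_J(s)-\ind_J(t)}\ds\dt$, and since $\abs{\ind_J(s)-\ind_J(t)}$ is the indicator of $(J\times J^\compl)\cup(J^\compl\times J)$, one further use of symmetry gives $\iint_{J\times(\R\setminus J)}\abs{s-t}^{n-1}g_\eps(\abs{s-t})\ds\dt$ (which is $\iint_{J\times(\R\setminus J)}\abs{s-t}g_\eps(\abs{s-t})\ds\dt$ when $n=2$, matching the statement). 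Recalling $\rho_\eps(r)=\om_{n-1}\abs{r}^{n-1}g_\eps(\abs{r})$, the same reduction shows this quantity also equals $\tfrac{1}{2\om_{n-1}}\Per^1_{\rho_\eps}(J)$. Plugging $J=E_{\sigma,y'}$ back in and reorganising the iterated integrals by Tonelli yields both announced identities.

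There is no genuine obstacle here — the whole argument is bookkeeping. The points to watch are: polar integration in $z$ must produce exactly the weight $r^{n-1}$, which after the substitution $t=s+r$ becomes the weight $\abs{s-t}^{n-1}$ defining $\rho_\eps$ (so that the correct $1$-dimensional slicing kernel is $\rho_\eps$, not $g_\eps$ itself); the map $r\mapsto t=s+r$ sweeps out only the half-plane $\{s<t\}$, the complementary half being recovered from the symmetry of the integrand; and the decomposition $y=y'+s\sigma$ has unit Jacobian, so that Lebesgue measure on $\Ren$ factors as the product of $\dH^{n-1}_{y'}$ on $\{\sigma\}^\perp$ and $\ds$ on $\R\sigma$. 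No compactness, regularity or approximation argument is needed, and the identities hold in $[0,+\infty]$ for arbitrary measurable $E$.
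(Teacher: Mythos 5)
Your proof is correct and follows essentially the same route as the paper's: pass to the difference variable, write it in polar coordinates to produce the weight $r^{n-1}$, split $y$ along $\{\sigma\}^\perp\oplus\R\sigma$, and apply Tonelli, with the substitution $t=s+r$ and symmetry identifying the inner integral as $\tfrac{1}{2\om_{n-1}}\Per^1_{\rho_\eps}(E_{\sigma,y})$. Your observation that the first displayed formula, with weight $\abs{s-t}$ rather than $\abs{s-t}^{n-1}$, is literally accurate only for $n=2$ is also a correct reading of the statement.
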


\begin{proof}
By the change of variable $y=x+r\sigma$ with fixed $x$ and Fubini's theorem, we have
\[
\begin{aligned}
\Per_{G_\eps}(E)
&= 2\iint_{E\times E^\compl} \edit{G_\eps}(x-y)\dx\dy\\
&= 2\int_{\bbS^{n-1}} \int_0^{+\infty} \int_E \ind_{E^\compl}(x+r\sigma)\, r^{n-1}g_\eps(r)
\dx\dr\dH^{\edit{n-1}}_\sigma\\
&=\frac{1}{\om_{n-1}}\int_{\bbS^{n-1}} \int_\R \int_E \ind_{E^\compl}(x+r\sigma)\,
\rho_\eps(r)\dx\dr\dH^{\edit{n-1}}_\sigma,
\end{aligned}
\]
where we have used the definition of $\rho_\eps$ for the last equality.
Then, for $\sigma$ fixed, let us make the change of variable $x=y+s\sigma$, where
$y=\pi_{\{\sigma\}^\perp}(x)$ is the orthogonal projection of $x$ on $\{\sigma\}^\perp$.
This yields
\[
\Per_{G_\eps}(E) = \frac{1}{2}\int_{\bbS^{n-1}}\int_{\{\sigma\}^\perp} \int_\R \int_\R
\ind_E\big(s\sigma+y\big)\ind_{E^\compl}\big((s+r)\sigma+y\big) \rho_\eps(r)
\ds\dr\dH^{n-1}_y\dH^{n-1}_\sigma.
\]
Finally, using Fubini's theorem and making the change of variable $t=s+r$, where $s$ is fixed, we
obtain, by definition of $E_{\sigma,y}$ and $\Per_{\rho_\eps}^1$,
\begin{multline*}
\Per_{G_\eps}(E) =
\frac{1}{\om_{n-1}}\int_{\bbS^{n-1}}\int_{\{\sigma\}^\perp}\int_{E_{\sigma,y}}
\int_{\R\setminus E_{\sigma,y}} \rho_\eps(s-t) \ds\dt\dH^{n-1}_y\dH^{n-1}_\sigma\\
=\frac{1}{2\om_{n-1}}\int_{\bbS^{n-1}}\int_{\{\sigma\}^\perp}\Per^1_{\rho_\eps}(E_{\sigma,y})
\dH^{n-1}_y \dH^{n-1}_\sigma.
\end{multline*}
This concludes the proof.
\end{proof}

\begin{rmk}\label{rmk:PerGbound}
Recall that by \autocite[Lemma~3.13]{Peg2021},
$\bfK_{1,n}=\frac{\Gamma\left(\frac{n}{2}\right)}{\sqrt{\pi}\Gamma\left(\frac{n+1}{2}\right)}$,
so that, by \cref{Hint}, the kernel $\rho_\eps$ satisfies
\[
\int_{\R} \abs{t}\rho_\eps(t)\dt
=2\om_{n-1}\int_0^\infty t^ng_\eps(t)\dt
=\frac{2\om_{n-1}I_G^1}{\abs{\bbS^{n-1}}}=\frac{2\om_{n-1}}{\bfK_{1,n}\abs{\bbS^{n-1}}}
=1
=\frac{1}{\bfK_{1,1}}.
\]
Hence by \cref{prp:boundPerG}, we have
\[
\calE^1_{\rho_\eps}(E)= P(E)-\Per^1_{\rho_\eps}(E)\geq 0
\]
for every measurable set $E\subsq\R$.
\end{rmk}

\edit{Similarly, as a straightforward consequence of \autocite[Theorem~3.103]{AFP2000} (see also
Crofton's formula \autocite[§3.16]{Mor2016} or \autocite[Theorem~3.2.26]{Fed1996}),}
for any set of finite perimeter $E\subsq\edit{\R^{n}}$, $E_{\sigma,y}$ is a one-dimensional set of
finite perimeter for $\calH^{n-1}$-almost every $\sigma$ and $y$, and we have
\[
P(E)=\calH^{n-1}(\partial^* E)=\frac{1}{2\om_{n-1}}\int_{\bbS^{n-1}} \int_{\{\sigma\}^\perp}
P^1(E_{\sigma,y})\dH_y^{n-1}\dH^{n-1}_\sigma,
\]
where $P^1(E_{\sigma,y})=\calH^0(\partial^* E_{\sigma,y})$ is the standard perimeter in dimension
$1$. Hence, we have the following representation of the critical energy $\calE_{G_\eps}$.

\begin{cor}\label{cor:expecrit}
For any set of finite perimeter $E\subsq\Ren$, we have
\[
\begin{aligned}
\calE_{G_\eps}(E)
&=\frac{1}{2\om_{n-1}} \int_{\bbS^{n-1}} \int_{\{\sigma\}^\perp}
\calE^1_{\rho_\eps}(E_{\sigma,y}) \dH_y^{n-1}\dH^{n-1}_\sigma,
\end{aligned}
\]
where $\calE^1_{\rho_\eps}:=P^1-\Per^1_{\rho_\eps}$, and $E_{\sigma,y}$ is given by
\cref{eq:defEst}.
\end{cor}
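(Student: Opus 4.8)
The plan is to obtain the identity by subtracting, slice by slice, the nonlocal slicing formula just established for $\Per_{G_\eps}$ from the classical coarea-type slicing formula for the perimeter. Recall from \autocite[Theorem~3.103]{AFP2000} (stated just above) that for any set of finite perimeter $E\subsq\Ren$ the slice $E_{\sigma,y}$ is a one-dimensional set of finite perimeter for $\calH^{n-1}$-a.e.\ $\sigma\in\bbS^{n-1}$ and $y\in\{\sigma\}^\perp$, and that
\[
\int_{\bbS^{n-1}}\int_{\{\sigma\}^\perp} P^1(E_{\sigma,y})\dH^{n-1}_y\dH^{n-1}_\sigma = 2\om_{n-1}\,P(E)<\infty,
\]
so that $(\sigma,y)\mapsto P^1(E_{\sigma,y})$ is integrable on $\bbS^{n-1}\times\{\sigma\}^\perp$ with respect to $\calH^{n-1}\tens\calH^{n-1}$.

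Next I would check that the two remaining slicewise quantities are integrable as well, which is what makes the subtraction legitimate. The one-dimensional analogue of \cref{prp:boundPerG}, combined with the moment computation in \cref{rmk:PerGbound} (namely $\int_\R\abs{t}\rho_\eps(t)\dt=1/\bfK_{1,1}$), gives, for every one-dimensional set of finite perimeter $J$,
\[
0\leq \Per^1_{\rho_\eps}(J)\leq P^1(J).
\]
Applying this with $J=E_{\sigma,y}$ shows that $(\sigma,y)\mapsto \Per^1_{\rho_\eps}(E_{\sigma,y})$ is dominated by the integrable function $(\sigma,y)\mapsto P^1(E_{\sigma,y})$, hence integrable, and consequently so is $(\sigma,y)\mapsto \calE^1_{\rho_\eps}(E_{\sigma,y})=P^1(E_{\sigma,y})-\Per^1_{\rho_\eps}(E_{\sigma,y})$, which is moreover nonnegative by \cref{rmk:PerGbound}.

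Having secured integrability, the conclusion is immediate: splitting the integral of the difference into a difference of integrals and invoking the two slicing identities,
\[
\frac{1}{2\om_{n-1}}\int_{\bbS^{n-1}}\int_{\{\sigma\}^\perp}\calE^1_{\rho_\eps}(E_{\sigma,y})\dH^{n-1}_y\dH^{n-1}_\sigma
= P(E)-\Per_{G_\eps}(E)=\calE_{G_\eps}(E).
\]
The only point requiring care — and the one I would single out as the main (if modest) obstacle — is precisely this interchange: a priori each slicewise term is defined only for a.e.\ $(\sigma,y)$, and the integrands $\calE^1_{\rho_\eps}(E_{\sigma,y})$, $P^1(E_{\sigma,y})$, $\Per^1_{\rho_\eps}(E_{\sigma,y})$ individually need not be of constant sign in a way that would allow a naive use of additivity, so one genuinely needs the domination $\Per^1_{\rho_\eps}(E_{\sigma,y})\leq P^1(E_{\sigma,y})\in L^1$ (equivalently the finiteness of $\Per_{G_\eps}(E)$ supplied by \cref{prp:boundPerG}) before separating the two integrals.
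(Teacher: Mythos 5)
Your argument is correct and coincides with the paper's (the paper treats the corollary as immediate by subtracting the slicing formula for $\Per_{G_\eps}$ from the classical slicing formula for $P$, exactly as you do). Your additional care in justifying the subtraction via the slicewise domination $\Per^1_{\rho_\eps}(E_{\sigma,y})\leq P^1(E_{\sigma,y})$, which is integrable since $P(E)<\infty$, is a sound way of making explicit what the paper leaves implicit.
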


We give a simple expression of the one-dimensional critical energy.  of a segment $(a,b)\subsq\R$.

\begin{lem}\label{lem:critenergy1}
For every $a,b\in \R$ such that $a<b$, we have
\begin{equation}\label{eq:critenergy1}
\calE^1_{\rho_\eps}((a,b))=4\int_{-\infty}^a\int_b^{+\infty} \rho_\eps(s-t)\ds\dt.
\end{equation}
In particular $\calE^1_{\rho_\eps}((a,b))$ decreases as the interval grows.
In addition,
\begin{equation}\label{eq:critenergy2}
\calE^1_{\rho_\eps}(\emptyset)=\calE^1_{\rho_\eps}(\R)=\calE^1_{\rho_\eps}((b,+\infty))
=\calE^1_{\rho_\eps}((-\infty,a))=0.
\end{equation}
\end{lem}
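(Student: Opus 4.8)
The plan is to reduce the entire statement to the single explicit fact that, by \cref{rmk:PerGbound} together with the evenness of $\rho_\eps$ (which depends on $\abs{r}$ only), one has $\int_0^{+\infty} t\,\rho_\eps(t)\,\dt=\tfrac{1}{2}$. All integrals appearing below have nonnegative integrands and are finite thanks to this (e.g. $\int_v^{+\infty}\rho_\eps\le v^{-1}\int_0^{+\infty}u\rho_\eps$ for $v>0$), so Fubini's theorem applies freely and no integrability of $\rho_\eps$ itself near the origin is needed.

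First I would compute $\Per^1_{\rho_\eps}((a,b))$ starting from $\Per^1_{\rho_\eps}(J)=2\iint_{J\times J^\compl}\rho_\eps(s-t)\,\ds\dt$ and splitting the complement of $(a,b)$ into $(-\infty,a)$ and $(b,+\infty)$. For the left piece, the substitution $u=s-t$ at fixed $s$ and then $v=s-a$ turns $\int_a^b\!\int_{-\infty}^a\rho_\eps(s-t)\,\dt\ds$ into $\int_0^{b-a}\!\int_v^{+\infty}\rho_\eps(u)\,\dd u\,\dd v$, and for the right piece the substitution $u=t-s$ (using evenness of $\rho_\eps$) and $v=b-s$ gives the same quantity; hence $\Per^1_{\rho_\eps}((a,b))=4\int_0^{b-a}\!\int_v^{+\infty}\rho_\eps(u)\,\dd u\,\dd v$. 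By Fubini this equals $4\int_0^{+\infty}\min(u,b-a)\,\rho_\eps(u)\,\dd u=4\int_0^{+\infty}u\,\rho_\eps(u)\,\dd u-4\int_{b-a}^{+\infty}(u-(b-a))\,\rho_\eps(u)\,\dd u=2-4\int_{b-a}^{+\infty}(u-(b-a))\,\rho_\eps(u)\,\dd u$. Since $P^1((a,b))=\calH^0(\{a,b\})=2$, this gives $\calE^1_{\rho_\eps}((a,b))=4\int_{b-a}^{+\infty}(u-(b-a))\,\rho_\eps(u)\,\dd u$, and one last change of variable $u=s-t$ at fixed $t$ followed by Fubini identifies this with $4\int_{-\infty}^a\!\int_b^{+\infty}\rho_\eps(s-t)\,\ds\dt$, which is \cref{eq:critenergy1}.

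The monotonicity is then immediate: $\calE^1_{\rho_\eps}((a,b))$ depends on $(a,b)$ only through its length $\ell=b-a$, via $\ell\mapsto\int_\ell^{+\infty}(u-\ell)\,\rho_\eps(u)\,\dd u$, whose integrand $\max(u-\ell,0)\,\rho_\eps(u)$ is nonincreasing in $\ell$ for each $u$; hence the energy decreases as the interval grows. For \cref{eq:critenergy2}, the cases $\emptyset$ and $\R$ are trivial because $P^1$ and $\Per^1_{\rho_\eps}$ both vanish there, while for the half-lines the same computation as above gives $\Per^1_{\rho_\eps}((b,+\infty))=2\int_0^{+\infty}\!\int_v^{+\infty}\rho_\eps(u)\,\dd u\,\dd v=2\int_0^{+\infty}u\,\rho_\eps(u)\,\dd u=1=P^1((b,+\infty))$, so $\calE^1_{\rho_\eps}((b,+\infty))=0$, and symmetrically for $(-\infty,a)$. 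There is no genuine obstacle here: the only point requiring care is bookkeeping the affine substitutions so that the truncated first moment $\int_{b-a}^{+\infty}(u-(b-a))\,\rho_\eps(u)\,\dd u$ emerges with the correct sign, and checking that each integral is finite before invoking Fubini.
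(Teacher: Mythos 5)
Your proof is correct and rests on the same ingredients as the paper's: the decomposition of the complement of $(a,b)$ into the two half-lines $(-\infty,a)$ and $(b,+\infty)$, the computation $\Per^1_{\rho_\eps}\big((-\infty,a)\big)=2\int_0^{+\infty}u\rho_\eps(u)\,\dd u=1$ coming from the normalization recalled in \cref{rmk:PerGbound}, and the identity $P^1((a,b))=2$. The only difference is in the bookkeeping: the paper obtains \cref{eq:critenergy1} by subtracting two representations of these quantities (its \cref{eq:p1form1,eq:p1form2}) so that the cross terms cancel directly, whereas you compute $\Per^1_{\rho_\eps}((a,b))$ in closed form as $2-4\int_{b-a}^{+\infty}\big(u-(b-a)\big)\rho_\eps(u)\,\dd u$ and then re-identify the remainder with the double integral — a slightly longer route whose small advantage is that the dependence on the length $b-a$ alone, and hence the monotonicity claim, becomes explicit.
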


\begin{proof}
By a change of variable and Fubini's theorem, for any $a\in\R$, we have
\[
P_{\rho_\eps}^1((-\infty,a))=2\int_{-\infty}^a\int_a^{+\infty} \rho_\eps(t-s)\ds\dt
=2\int_0^{+\infty} \rho(t) \left(\int_0^t \ds\right)\dt=2\int_0^{+\infty} t\rho(t)\dt=1.
\]
Similarly, $P_{\rho_\eps}^1((b,+\infty))=1$, and since $P^1((-\infty,a))=P^1((b,+\infty))=1$,
\cref{eq:critenergy2} follows.
Next,
\begin{equation}\label{eq:p1form1}
\begin{aligned}
2=P^1((a,b))
&=P^1((-\infty,a))+P^1((b,+\infty))\\
&=2\int_{-\infty}^a\int_a^{+\infty} \rho_\eps(s-t)\ds\dt+2\int_{-\infty}^b\int_b^{+\infty}
\rho_\eps(s-t)\ds\dt,
\end{aligned}
\end{equation}
for every $a,b\in\R$ such that $a<b$. We also have
\begin{equation}\label{eq:p1form2}
P^1_{\rho_\eps}((a,b))=2\int_{-\infty}^a \int_a^b \rho_\eps(s-t)\ds\dt+2\int_b^{+\infty} \int_a^b
\rho_\eps(s-t)\ds\dt.
\end{equation}
Subtracting \cref{eq:p1form2} from \cref{eq:p1form1}, we obtain \cref{eq:critenergy1}. The
identities \cref{eq:critenergy2} are obvious.
\end{proof}

For a general set of finite perimeter in $\R$, we have the following expression of the critical
energy.

\begin{lem}\label{lem:crit1d}
For any set $E\subset\R$ which is a finite disjoint union of open intervals, let
$\{C_i\}_{i\in\{1,\dotsc,N\}}$ be the connected components of $E^\compl$. Then we have
\begin{equation}\label{crit1d:res}
\calE^1_{\rho_\eps}(E)=2\sum_{\substack{1\leq i,j\leq N\\i\neq j}} \iint_{C_i\times C_j}
\rho_\eps(s-t)\ds\dt + \sum_{i=1}^N \calE_{\rho_\eps}^1(C_i),
\end{equation}
where, for the intervals $C_i$, $\calE_{\rho_\eps}^1(C_i)$ is given by \cref{lem:critenergy1}.
\end{lem}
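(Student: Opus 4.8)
The plan is to expand both the one-dimensional perimeter $P^1(E)$ and the nonlocal perimeter $\Per^1_{\rho_\eps}(E)$ as sums of contributions indexed by the complementary components $C_1,\dots,C_N$, and then take the difference. First I would observe that since $E$ is a finite disjoint union of open intervals, $E^\compl$ is a finite disjoint union of (possibly unbounded) intervals $C_1,\dots,C_N$, exactly two of which, say $C_1=(-\infty,a)$ and $C_N=(b,+\infty)$, are unbounded half-lines (assuming $E$ is nonempty and bounded; the degenerate cases are covered by \cref{eq:critenergy2}). The key identity for the local perimeter is $\calH^0(\partial E)=\calH^0(\partial E^\compl)$ together with the fact that, writing $P^1(E)=\sum_i P^1(C_i)$ is \emph{not} correct because of the shared endpoints; instead I would use that each finite endpoint of a component $C_i$ is also an endpoint of $E$, and count: $P^1(E)=2(N-1)$ when there are $N$ complementary components (the two half-lines contribute one boundary point each, the $N-2$ bounded ones contribute two each, total $2+2(N-2)=2N-2$, matching the $2(N-1)$ endpoints of the $N-1$ bounded components of $E$).

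Next I would decompose $\Per^1_{\rho_\eps}(E)$. Using the bilinear structure $\Per^1_{\rho_\eps}(E) = 2\iint_{E\times E^\compl}\rho_\eps(s-t)\ds\dt$ is awkward because $E$ itself is being integrated; the cleaner route is to write, for the difference, $\calE^1_{\rho_\eps}(E) = P^1(E) - \Per^1_{\rho_\eps}(E)$ and use the slicing/additivity of the critical energy more directly. Concretely, I would proceed by induction on $N$, or equivalently by a telescoping argument: starting from $E^\compl = \bigsqcup_{i=1}^N C_i$, I would split off one bounded component of $E^\compl$ at a time and track how $\calE^1_{\rho_\eps}$ changes. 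The base case is $N=2$, i.e. $E=(a,b)$ a single interval, which is exactly \cref{lem:critenergy1} giving $\calE^1_{\rho_\eps}((a,b)) = 4\int_{-\infty}^a\int_b^{+\infty}\rho_\eps(s-t)\ds\dt$. For the inductive step, passing from a configuration with complementary components $\{C_i\}$ to one with an extra component $C_{N+1}$ carved out of $E$ (equivalently, removing one interval from $E$), I would show that both $P^1$ and $\Per^1_{\rho_\eps}$ pick up explicit boundary and cross terms, and that their difference produces precisely the cross term $2\iint_{C_i\times C_{N+1}}\rho_\eps$ summed against the other components, plus the diagonal term $\calE^1_{\rho_\eps}(C_{N+1})$ from \cref{lem:critenergy1}.

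Alternatively — and this is probably the slickest presentation — I would compute directly. Write $\ind_{E^\compl} = \sum_{i=1}^N \ind_{C_i}$, which is a genuine identity of $L^1_\loc$ functions since the $C_i$ are disjoint. Then, since $\abs{\ind_E(s)-\ind_E(t)} = \abs{\ind_{E^\compl}(s)-\ind_{E^\compl}(t)}$ and $\ind_{E^\compl}(s)-\ind_{E^\compl}(t) = \sum_i(\ind_{C_i}(s)-\ind_{C_i}(t))$, and noting that at any pair $(s,t)$ at most the terms with $s$ or $t$ in some $C_i$ are nonzero, I get $\abs{\ind_{E^\compl}(s)-\ind_{E^\compl}(t)} = \sum_i \abs{\ind_{C_i}(s)-\ind_{C_i}(t)} - 2\sum_{i<j}(\ind_{C_i}(s)\ind_{C_j}(t)+\ind_{C_j}(s)\ind_{C_i}(t))$, because when $s\in C_i$ and $t\in C_j$ with $i\ne j$ the left side is $0$ but the first sum on the right gives $2$. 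Integrating against $\rho_\eps(s-t)$ yields
\[
\Per^1_{\rho_\eps}(E) = \sum_{i=1}^N \Per^1_{\rho_\eps}(C_i) - 4\sum_{i<j}\iint_{C_i\times C_j}\rho_\eps(s-t)\ds\dt.
\]
Doing the analogous (elementary, finite) computation for the local perimeter, $P^1(E) = \sum_{i=1}^N P^1(C_i)$ is false by $2$ for each... no: actually $P^1(E^\compl)=P^1(E)$ and the same combinatorial identity at the level of the $\bv$ seminorm gives $P^1(E) = \sum_i P^1(C_i) - 2\cdot(\text{number of adjacencies})$; since the $C_i$ are the components of an open set they are pairwise non-adjacent as open intervals except that consecutive ones share a single boundary point of $E$, contributing $0$ to any cross term because single points are $\calH^0$... here one must be slightly careful, but the upshot is that the cross terms in $P^1$ vanish while those in $\Per^1_{\rho_\eps}$ do not, and subtracting gives \cref{crit1d:res}. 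The main obstacle I anticipate is bookkeeping the boundary-point contributions to $P^1(E)$ cleanly — making sure the combinatorial identity $\abs{\sum_i(\ind_{C_i}(s)-\ind_{C_i}(t))} = \sum_i\abs{\ind_{C_i}(s)-\ind_{C_i}(t)} - 2\sum_{i<j}(\cdots)$ is applied correctly on both the local and nonlocal sides, and confirming that the "diagonal'' contributions $\Per^1_{\rho_\eps}(C_i)$ recombine with $P^1(C_i)$ into $\calE^1_{\rho_\eps}(C_i)$, which for the two unbounded components vanishes by \cref{eq:critenergy2} so that effectively only the bounded $C_i$ contribute to the last sum — consistent with \cref{crit1d:res} once one notes $\calE^1_{\rho_\eps}$ of a half-line is $0$.
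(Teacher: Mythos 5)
Your proposal (in its ``slick'' form) is correct, and it proves \cref{lem:crit1d} by a genuinely different route from the paper. The paper decomposes $\Per^1_{\rho_\eps}(E)$ and $P^1(E)$ according to the endpoints $a_i,b_i$ of the intervals of $E$, attaches to each complementary component $C_i$ two defect terms $L_i,R_i$ measuring the discrepancy between the half-line contributions $2\int_{-\infty}^{b_i}\int_{b_i}^{+\infty}$ (resp.\ $2\int_{-\infty}^{a_{i+1}}\int_{a_{i+1}}^{+\infty}$) and the actual nonlocal interaction of $C_i$ with $E$, and then identifies each $L_i+R_i$ as a sum of cross terms plus $\calE^1_{\rho_\eps}(C_i)$. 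You instead verify the pointwise inclusion--exclusion identity
\[
\abs{\ind_{E^\compl}(s)-\ind_{E^\compl}(t)}=\sum_{i}\abs{\ind_{C_i}(s)-\ind_{C_i}(t)}-2\sum_{i<j}\bigl(\ind_{C_i}(s)\ind_{C_j}(t)+\ind_{C_j}(s)\ind_{C_i}(t)\bigr),
\]
which holds because the $C_i$ are pairwise disjoint (the only nontrivial case being $s\in C_i$, $t\in C_j$, $i\neq j$, where both sides equal $0$), integrate it against $\rho_\eps(s-t)$ using $\abs{\ind_E(s)-\ind_E(t)}=\abs{\ind_{E^\compl}(s)-\ind_{E^\compl}(t)}$ and the symmetry of $\rho_\eps$, and subtract from the additivity of $P^1$ over the $C_i$. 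Since all the quantities involved are finite (by \cref{prp:boundPerG} and the finiteness of the first moment of $\rho_\eps$), the algebra is legitimate, and the factor bookkeeping ($-4\sum_{i<j}=-2\sum_{i\neq j}$) matches \cref{crit1d:res}. Your version is arguably cleaner and makes the structure of the cross terms transparent; the paper's version avoids any discussion of $P^1(E^\compl)$ by working only with \cref{eq:p1form1}.

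One point to tidy up: your justification of $P^1(E)=\sum_i P^1(C_i)$ is muddled. Consecutive components of $E^\compl$ do \emph{not} share a boundary point --- they are separated by intervals of $E$ of positive length, so there are no ``adjacencies'' at all and no cancellation to worry about. The identity follows by direct counting: if $E$ is a bounded union of $k$ intervals, then $P^1(E)=2k$ while the $k+1$ components of $E^\compl$ contribute $1+1+2(k-1)=2k$, and similarly in the unbounded cases. With that replaced, your argument is complete.
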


Let us point out that $E^\compl$ may have up to two \textsl{unbounded} connected
components, and their critical energy is $0$ by \cref{lem:critenergy1}. As a consequence of
\cref{lem:critenergy1,lem:crit1d}, the 1D critical energy decreases by convexification and the
energy of a nonempty open segment is a decreasing function of its length.

\begin{cor}\label{cor:critdecconv}
Let $E\subsq J\subsq\R$ with $J$ an interval and $\calL^1(E)>0$, then
$\calE^1_{\rho_\eps}(J)\leq\calE^1_{\rho_\eps}(E)$.
\end{cor}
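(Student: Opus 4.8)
The plan is to reduce the statement about a general set $E\subsq J$ to the case of finitely many intervals via an approximation/monotonicity argument, and then to invoke \cref{lem:crit1d} together with \cref{lem:critenergy1}. First I would observe that if $\calE^1_{\rho_\eps}$ is monotone under inclusion of sets sandwiched inside a common interval, then by the slicing representation \cref{cor:expecrit} the same will be true for $\calE_{G_\eps}$ in every dimension — so this corollary is the one-dimensional core. To handle an arbitrary measurable $E$ with $\calL^1(E)>0$ contained in an interval $J$, I would first reduce to $J=\R$ (replacing $E^\compl$ by $E^\compl$ within $\R$ changes nothing) or rather keep $J$ as is and compare $\calE^1_{\rho_\eps}(E)$ with $\calE^1_{\rho_\eps}(J)$ directly.

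The cleanest route: write $J=(\alpha,\beta)$ (allowing $\alpha=-\infty$, $\beta=+\infty$). The complement $E^\compl$ contains $(-\infty,\alpha]$ and $[\beta,+\infty)$, whose contributions to the critical energy vanish by \cref{lem:critenergy1}, and in between lies a bounded (or one-sidedly bounded) set $E^\compl\cap J$. By \cref{lem:crit1d}, for $E$ a finite union of intervals we have the decomposition
\[
\calE^1_{\rho_\eps}(E)=2\sum_{i\neq j}\iint_{C_i\times C_j}\rho_\eps(s-t)\ds\dt+\sum_i \calE^1_{\rho_\eps}(C_i),
\]
where the $C_i$ are the components of $E^\compl$; both terms on the right are manifestly nonnegative (the double integrals because $\rho_\eps\geq0$, the $\calE^1_{\rho_\eps}(C_i)$ by \cref{lem:critenergy1} and \cref{rmk:PerGbound}). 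When $E=J$ itself, $E^\compl=(-\infty,\alpha]\cup[\beta,+\infty)$ has at most two components, the cross term is $2\iint_{(-\infty,\alpha]\times[\beta,+\infty)}\rho_\eps$ and each self-energy vanishes, so $\calE^1_{\rho_\eps}(J)=4\int_{-\infty}^\alpha\int_\beta^{+\infty}\rho_\eps(s-t)\ds\dt$ matching \cref{eq:critenergy1}. Since $E\subsq J$ forces $E^\compl\supsq J^\compl$, one sees that the two "outer" components of $E^\compl$ contain $(-\infty,\alpha]$ and $[\beta,+\infty)$ respectively, so the cross term for $E$ already dominates $\calE^1_{\rho_\eps}(J)$, and all remaining terms in the $E$-decomposition are $\geq0$; hence $\calE^1_{\rho_\eps}(E)\geq\calE^1_{\rho_\eps}(J)$.

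For general measurable $E$ (not a finite union of intervals), I would pass to the limit: approximate $E$ from inside by finite unions of intervals $E_k\subsq J$ with $\ind_{E_k}\to\ind_E$ in $L^1$ — possible since $\calL^1(E)>0$ and $E$ can be taken with finite measure after intersecting with a large ball inside $J$, using that $\rho_\eps\in L^1$ so the tails of $\Per^1_{\rho_\eps}$ are controlled. Then $\Per^1_{\rho_\eps}$ is continuous under $L^1$-convergence of indicators (it is an integral of $|\ind_E(s)-\ind_E(t)|\rho_\eps(s-t)$ against the finite measure $\rho_\eps(s-t)\ds\dt$, again using \cref{Hint} and dominated convergence), and $P^1$ is lower semicontinuous; combining, $\calE^1_{\rho_\eps}(E)\leq\liminf_k\calE^1_{\rho_\eps}(E_k)$ — wait, the sign is delicate, so instead I would argue monotonicity purely through the nonlocal term, using that $P^1(E)\geq P^1(J)$ is false in general, which is why one must keep the full critical-energy structure.

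The main obstacle I anticipate is precisely this: $P^1(E)$ can be arbitrarily large (many components) while $P^1(J)=2$, so the inequality $\calE^1_{\rho_\eps}(J)\leq\calE^1_{\rho_\eps}(E)$ genuinely needs the cancellation in \cref{lem:crit1d} rather than a naive term-by-term comparison. The way around it is to \emph{not} approximate $E$ itself but to note that adding the missing pieces $J\setminus E$ to $E$ (to reach $J$) only merges some complementary components and removes the "inner" components of $E^\compl$ — both operations decrease the right-hand side of \cref{crit1d:res} since they remove nonnegative self-energy terms and can only decrease cross terms (merging $C_i\cup C_j$ into one component removes the positive $2\iint_{C_i\times C_j}$ contribution and replaces $\calE^1_{\rho_\eps}(C_i)+\calE^1_{\rho_\eps}(C_j)$ by $\calE^1_{\rho_\eps}(C_i\cup C_j)\leq\calE^1_{\rho_\eps}(C_i)$ via \cref{lem:critenergy1}, using that $C_i\cup C_j\supsq C_i$ when they share the filled gap). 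Thus for $E$ a finite union of intervals the inequality is immediate from \cref{lem:crit1d}; the general case then follows by the $L^1$-approximation and continuity of $\Per^1_{\rho_\eps}$ noted above, applied to both $E$ and $J$ (or just to $E$, since $J$ is already an interval). I would write the finite-union case in detail and dispatch the approximation in a sentence.
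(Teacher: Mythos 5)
Your core argument, for $E$ a finite union of intervals, is correct and is exactly the paper's: by \cref{lem:crit1d} every term in the decomposition of $\calE^1_{\rho_\eps}(E)$ is nonnegative, and the cross term between the two unbounded components of $E^\compl$ alone already equals $\calE^1_{\rho_\eps}(\co(E))$ by \cref{eq:critenergy1}, which dominates $\calE^1_{\rho_\eps}(J)$ since $\co(E)\subsq J$ and the energy of intervals decreases as they grow (\cref{lem:critenergy1}). Your alternative ``fill the gaps one at a time'' variant also works but is not needed.

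The gap is in your reduction of a general measurable $E$ to this case. The $L^1$-approximation you sketch does not close: $P^1$ is only \emph{lower} semicontinuous under $L^1$ convergence of indicators, so even granting continuity of $\Per^1_{\rho_\eps}$ you would obtain $\calE^1_{\rho_\eps}(E)\leq\liminf_k\calE^1_{\rho_\eps}(E_k)$, which combined with $\calE^1_{\rho_\eps}(E_k)\geq\calE^1_{\rho_\eps}(J)$ says nothing about $\calE^1_{\rho_\eps}(E)$ versus $\calE^1_{\rho_\eps}(J)$ --- you flagged this (``the sign is delicate'') but then still invoked the approximation in your conclusion. Moreover, the continuity of $\Per^1_{\rho_\eps}$ under $L^1$ convergence is itself unavailable: \cref{Hint} only guarantees a finite \emph{first moment}, so $\rho_\eps$ need not be integrable near the origin, and $\Per^1_{\rho_\eps}$ is then only lower semicontinuous as well. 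The correct (and simpler) reduction, which is the paper's, requires no approximation at all: if $J$ is unbounded then $\calE^1_{\rho_\eps}(J)=0\leq\calE^1_{\rho_\eps}(E)$ by \cref{eq:critenergy2} and \cref{rmk:PerGbound}; if $E$ has infinite perimeter the inequality is trivial; otherwise $E$ is a bounded set of finite perimeter in $\R$, hence Lebesgue-equivalent to a finite disjoint union of open intervals, and your finite-union argument applies verbatim.
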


\begin{proof}
Let $E\subsq\R$ with $\calL^1(E)>0$, and let $J$ be an interval containing $E$.
Let $a'\leq a<b\leq b'\in \R\,\cup\,\{\pm\infty\}$ such that $\co(E)=(a,b)$ and $J=(a',b')$.
If $a'=-\infty$ or $b'=+\infty$, $\calE_{\rho_\eps}^1(J)=0\leq \calE^1_{\rho_\eps}(E)$ by
\cref{lem:critenergy1,rmk:PerGbound}, so the result holds true.
If $E$ does not have finite perimeter, then~${\calE^1_{\rho_\eps}(E)=+\infty}$, and the result holds
true as well. Thus, let us assume that $E$ is a bounded set of finite perimeter.
In particular, up to a negligible set, $E$ is the disjoint union of $k$ open intervals with $k\geq
1$ since $\calL^1(E)>0$. In addition, since $E$ is bounded, $E^\compl$ has two
unbounded components~${C_1=(-\infty,a)}$ and $C_{k+1}=(b,+\infty)$ (up to renumbering).
By \cref{lem:crit1d,lem:critenergy1}, we have
\[
\calE^1_{\rho_\eps}(E)\geq 4\int_{C_1\times C_{k+1}} \rho_\eps(s-t)\ds\dt
=\calE^1_{\rho_\eps}((a,b))
\geq \calE^1_{\rho_\eps}(J).
\]
\end{proof}

\begin{proof}[Proof of {\cref{lem:crit1d}}]
Let
\[
E=\bigsqcup_{i=1}^k~ (a_i,b_i),
\]
with $-\infty\leq a_1<b_1<\ldots<a_k<b_k\leq +\infty$, so that, setting $b_0\coloneqq-\infty$ and
$a_{k+1}\coloneqq+\infty$, the connected components of $E^\compl$ are given by
\[
C_i=(b_i,a_{i+1}),\qquad\forall i\in \{0,\dotsc,k\}.
\]
Beware that $C_0$ and $C_k$ are either empty or unbounded. If $a_1=-\infty$, $C_0=\emptyset$, and if
$b_k=+\infty$, $C_k=\emptyset$, in which cases it is an abuse to consider them connected components
of $E^\compl$, but they do not contribute to the terms in \cref{crit1d:res}.
Omitting the integrand $\rho_\eps(s-t)\ds\dt$ for the sake of readability, let us write
\begin{equation}\label{crit1d:eq1}
\Per_{\rho_\eps}^1(E)
=2\sum_{i=0}^k \int_{E} \int_{C_i}
=\sum_{i=0}^k \left(2\int_{C_i} \int_{\{t\,\in\,E~:~t>a_{i+1}\}} ~
+ 2\int_{C_i} \int_{\{t\,\in\,E~:~t<b_i\}}\right),
\end{equation}
with the convention $\{t<-\infty\}=\{t>+\infty\}=\emptyset$.
By \cref{eq:p1form1}, we have
\begin{equation}\label{crit1d:eq2}
P^1(E)=\sum_{i=1}^k \left(2\int_{-\infty}^{a_i}
\int_{a_i}^{+\infty}~+2\int_{-\infty}^{b_i}\int_{b_i}^{+\infty}\right).
\end{equation}
Note that this holds even if $a_1=-\infty$ or $b_k=+\infty$.
Let us define
\[
R_i\coloneqq 2\int_{-\infty}^{a_{i+1}} \int_{a_{i+1}}^{+\infty}~-2\int_{C_i}
\int_{\{t\,\in\,E~:~t>a_{i+1}\}},\qquad\forall i\in\{0,\dotsc,k\},
\]
and similarly
\[
L_i\coloneqq 2\int_{-\infty}^{b_i} \int_{b_i}^{+\infty}~-2\int_{C_i}
\int_{\{t\,\in\,E~:~t<b_i\}},\qquad\forall i\in\{0,\dotsc,k\}.
\]
Notice that $R_k=L_0=0$.
We observe that by definition of $L_i$, $R_i$ and \cref{crit1d:eq1}, \cref{crit1d:eq2},
\begin{equation}\label{crit1d:eq3}
\calE^1_{\rho_\eps}(E)
=\sum_{i=0}^{k} \left(L_i+R_i\right).
\end{equation}
Writing
\[
\int_{-\infty}^{a_{i+1}} \int_{a_{i+1}}^{+\infty}
= \int_{-\infty}^{b_i} \int_{a_{i+1}}^{+\infty}~
+\int_{C_i} \int_{a_{i+1}}^{+\infty},
\]
for $i\in\{0,\dotsc,k\}$, using \cref{lem:critenergy1} we have
\begin{equation}\label{crit1d:eq4}
R_i=2\left(\int_{-\infty}^{b_i}\int_{a_{i+1}}^{+\infty}~+
\int_{C_i}\int_{a_{i+1}}^{+\infty} \right) -2\int_{C_i}\int_{\{t\,\in\,E~:~t>a_{i+1}\}}
=2\int_{C_i} \int_{\{t\,\in\, E^\compl~:~t>a_{i+1}\}}~ +
\frac{1}{2}\calE^1_{\rho_\eps}(C_i).
\end{equation}
and similarly,
\begin{equation}\label{crit1d:eq5}
L_i=2\int_{C_i} \int_{\{t\,\in\, E^\compl~:~t<b_i\}}~ +
\frac{1}{2}\calE^1_{\rho_\eps}(C_i).
\end{equation}
The two previous equations hold even if $C_0$, $C_k$ are empty or unbounded.
Inserting \cref{crit1d:eq4,crit1d:eq5} into \cref{crit1d:eq3} yields
\[
\begin{aligned}
\calE^1_{\rho_\eps}(E)
&=2\sum_{i=0}^k
\left( \int_{C_i} \int_{E^\compl\,\cap\,\{t>a_{i+1}\}}~ +\int_{C_i}
\int_{E^\compl\,\cap\,\{t<b_i\}} \right) +\sum_{i=0}^k \calE^1_{\rho_\eps}(C_i)\\
&=2\sum_{i=0}^k \sum_{j\neq i} \int_{C_i} \int_{C_j}~ +\sum_{i=0}^k
\calE^1_{\rho_\eps}(C_i),
\end{aligned}
\]
which concludes the proof.
\end{proof}

We easily deduce from \cref{lem:crit1d} and the slicing decomposition of the critical energy stated
in \cref{cor:expecrit} that in dimension $n=2$, the critical energy of a connected set decreases by
convexification.

\begin{prp}\label{prp:decconvex}
If $E\subsq\R^2$ is a bounded, connected set of finite perimeter, then
\[
\calE_{G_\eps}(\co(E))\leq \calE_{G_\eps}(E),\qquad\forall \eps>0.
\]
\end{prp}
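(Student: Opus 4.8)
The plan is to combine the slicing formula for the critical energy from \cref{cor:expecrit} with the one-dimensional monotonicity statement in \cref{cor:critdecconv}. First I would fix a direction $\sigma\in\bbS^1$ and a point $y\in\{\sigma\}^\perp$, and compare the slice $E_{\sigma,y}$ of $E$ with the slice $(\co(E))_{\sigma,y}$ of its convex hull. The key elementary observation, special to dimension $2$, is that since $E$ is connected, for every line $\ell$ the intersection $\ell\cap\co(E)$ is a (possibly empty, possibly unbounded, possibly degenerate) interval, and moreover $\ell$ meets $\co(E)$ if and only if it meets $E$ itself: this is because the projection of $E$ onto any line is connected, hence an interval, and equals the projection of $\co(E)$. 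Thus for $\calH^1$-a.e. $\sigma$ and $y$, the slice $(\co(E))_{\sigma,y}$ is an interval $J$ containing the slice $E_{\sigma,y}$, and $\calL^1(E_{\sigma,y})>0$ precisely when $J\neq\emptyset$ (up to $\calL^1$-null sets, using that $E$ has finite perimeter so that the one-dimensional sections are a.e. sets of finite perimeter with the expected structure, cf. \autocite[Theorem~3.103]{AFP2000}).

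With that in hand, I would apply \cref{cor:critdecconv} slicewise: whenever $\calL^1(E_{\sigma,y})>0$ we get $\calE^1_{\rho_\eps}((\co(E))_{\sigma,y})\leq\calE^1_{\rho_\eps}(E_{\sigma,y})$, and when $\calL^1(E_{\sigma,y})=0$ the slice of $\co(E)$ is (a.e.) empty, so both one-dimensional critical energies vanish by \cref{lem:critenergy1}. Hence the pointwise inequality
\[
\calE^1_{\rho_\eps}\big((\co(E))_{\sigma,y}\big)\leq\calE^1_{\rho_\eps}\big(E_{\sigma,y}\big)
\]
holds for $\calH^1$-a.e.\ $\sigma$ and $y$. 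Integrating this over $y\in\{\sigma\}^\perp$ and averaging over $\sigma\in\bbS^1$, and using the slicing representation of \cref{cor:expecrit} for both $E$ and $\co(E)$ (both are bounded sets of finite perimeter — $\co(E)$ is convex and bounded, hence of finite perimeter), yields $\calE_{G_\eps}(\co(E))\leq\calE_{G_\eps}(E)$, which is the claim.

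The main obstacle is the careful handling of the measure-theoretic slicing: one must ensure that the dimensional-reduction identity of \cref{cor:expecrit} is genuinely applicable to $\co(E)$, that the sections $E_{\sigma,y}$ and $(\co(E))_{\sigma,y}$ agree for a.e.\ parameters with the naive geometric picture (so that "connected $\Rightarrow$ each section is an interval's worth of mass inside that interval" is legitimate up to null sets), and that the set of $(\sigma,y)$ where $\calL^1(E_{\sigma,y})=0$ but $(\co(E))_{\sigma,y}\neq\emptyset$ is $\calH^1$-negligible — this is exactly where connectedness of $E$ (equivalently indecomposability together with the projection argument) is used, and where the restriction to $n=2$ is essential, since in higher dimension a hyperplane section of $\co(E)$ can be nonempty while the corresponding section of $E$ is empty.
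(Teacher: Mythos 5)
Your proposal is correct and follows essentially the same route as the paper: the slicing representation of \cref{cor:expecrit}, the two-dimensional observation that a line meets $\co(E)$ if and only if it meets the connected set $E$, the slicewise application of \cref{cor:critdecconv} (equivalently \cref{lem:critenergy1,lem:crit1d}), and integration over $(\sigma,y)$. You even flag the measure-theoretic caveat about slices of zero length more explicitly than the paper does, so nothing further is needed.
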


\begin{proof}
First, recall that a bounded convex set is a set of finite perimeter, since it is
Lebesgue-equivalent to an open set with Lipschitz boundary, thus $\co(E)$ is a set of finite
perimeter.
Then, by \cref{cor:expecrit} we have
\begin{equation}\label{decconvex:eq1}
\calE_{G_\eps}(E)
=\frac{1}{4} \int_{\bbS^1} \int_\R \calE^1_{\rho_\eps}(E_{\sigma,t}) \dt\dH^1_\sigma,
\quad\text{ and }\quad
\calE_{G_\eps}(\co(E))
=\frac{1}{4} \int_{\bbS^1} \int_\R \calE^1_{\rho_\eps}(F_{\sigma,t}) \dt\dH^1_\sigma,
\end{equation}
where
\[
E_{\sigma,t} := \Big\{ s\in\R~:~ t\sigma^\perp+s\sigma\in E\Big\},
\quad\text{ and }\quad
F_{\sigma,t} := \Big\{ s\in\R~:~ t\sigma^\perp+s\sigma\in \co(E)\Big\}.
\]
Since $E$ is connected, for every $\sigma\in\bbS^1$ and $t\in\R$, the slice
$F_{\sigma,t}$ is empty if and only if $E_{\sigma,t}$ is empty (this is the argument which is valid
in dimension $2$ only).
In addition, since $E$ and $\co(E)$ are bounded sets of finite perimeter in $\R^2$, for
$\calH^1$-almost every $\sigma\in\bbS^1$ and $\calL^1$-almost every $t\in\R$, $E_{\sigma,t}$ and
$F_{\sigma,t}$ are bounded sets of finite perimeter in $\R$, and for every nonempty slice,
$F_{\sigma,t}$ is an interval s.t. $E_{\sigma,t}\subsq F_{\sigma,t}$ (since $F_{\sigma,t}$ is a
slice of a convex set). Hence, by \cref{lem:critenergy1,lem:crit1d}, for $\calH^1$-almost every
$\sigma\in\bbS^1$ and $\calL^1$-almost every $t\in\R$, there holds
\[
\calE_{\rho_\eps}^1(F_{\sigma,t})\leq\calE_{\rho_\eps}^1(E_{\sigma,t}).
\]
In view of \cref{decconvex:eq1}, this concludes the proof.
\end{proof}

\subsection{Convexity of minimizers in 2D}\label{subsec:minconv}

In this part, we shall use for the sake of brevity the abbreviations
$\calF_{\gamma,\eps}:=\calF_{\gamma,G_\eps}$ and $\calE_{\eps}:=\calE_{G_\eps}$.

Consider a connected minimizer $E_\eps\subsq\R^2$ of \cref{minrp}. Recall that
$\calF_{\gamma,\eps}=(1-\gamma)P+\gamma\calE_\eps$, thus by~\cref{prp:decconvex},
$\calF_{\gamma,\eps}(\co(E_\eps))\leq\calF_{\gamma,\eps}(E_\eps)$.
However, if $E_\eps$ is not convex, $\abs{\co(E_\eps)}>\abs{E_\eps}$, so $\co(E_\eps)$ is not a
valid competitor for \cref{minrp}. Recalling that $B_{1-\delta(\eps)} \subsq E_\eps \subsq
B_{1+\delta(\eps)}$ for small $\eps$, we have
\[
B_{1-\delta(\eps)} \subsq \co(E_\eps) \subsq B_{1+\delta(\eps)},
\]
and defining $t_\eps\coloneqq \sqrt{\abs{B_1}/\abs{\co(E_\eps)}}$, we see that
$t_\eps\co(E_\eps)$ is a valid competitor with $(1+\delta(\eps))^{-1}<t_\eps<1$. Let us show that
$\calF_{\gamma,\eps}(t_\eps\co(E_\eps))<\calF_{\gamma,\eps}(\co(E_\eps))$. This follows from the
following result, which we prove further below.

\begin{lem}\label{lem:decconvscale}
There exists $\ol{\eps}_1=\ol{\eps}_1(G,\gamma)>0$ such that the following holds.
If $E\subsq\R^2$ is a convex set such that
\[
B_{1-\delta(\eps)} \subsq E \subsq B_{1+\delta(\eps)},
\]
with $0<\eps<\ol{\eps}_1$, and where $\delta$ is the function given by \cref{thm:existmin}, then
\[
\calF_{\gamma,\eps}(tE)< \calF_{\gamma,\eps}(E),\qquad\forall t\in({\textstyle\frac{1}{2}},1).
\]
\end{lem}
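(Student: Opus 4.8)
The plan is to compare the two energies via the splitting $\calF_{\gamma,\eps}=(1-\gamma)P+\gamma\calE_\eps$ from \cref{eq:defF}. Since $P(tE)=tP(E)$ in dimension $2$,
\[
\calF_{\gamma,\eps}(tE)-\calF_{\gamma,\eps}(E)=(1-\gamma)(t-1)P(E)+\gamma\big(\calE_\eps(tE)-\calE_\eps(E)\big).
\]
As $E$ is convex and contains $B_{1-\delta(\eps)}$, monotonicity of the perimeter under inclusion of convex sets gives $P(E)\geq P(B_{1-\delta(\eps)})=2\pi(1-\delta(\eps))\geq\tfrac{3\pi}{2}$ (recall $\delta(\eps)<\tfrac14$), so the first term is a negative gain of size at least $\tfrac{3\pi}{2}(1-\gamma)(1-t)$. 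Hence it suffices to show $|\calE_\eps(tE)-\calE_\eps(E)|\leq C(\eps)(1-t)$ with a constant $C(\eps)$ that can be made arbitrarily small by shrinking $\eps$, \emph{uniformly} over all convex $E$ trapped between $B_{1-\delta(\eps)}$ and $B_{1+\delta(\eps)}$.

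To this end I would use the slicing representation \cref{cor:expecrit} (with $n=2$, $\om_1=2$) together with \cref{lem:critenergy1}: for convex $E$ every nonempty slice $E_{\sigma,y}$ is a bounded interval, so setting $\psi_\eps(\ell):=\calE^1_{\rho_\eps}((0,\ell))=4\int_\ell^{+\infty}(r-\ell)\rho_\eps(r)\dr$, $\ell_\sigma(y):=\calL^1(E_{\sigma,y})$, and $D_\sigma$ the orthogonal projection of $E$ onto $\{\sigma\}^\perp$, the scaling identity $(sE)_{\sigma,y}=sE_{\sigma,y/s}$ and a change of variables give
\[
\calE_\eps(sE)=\frac{s}{4}\int_{\bbS^1}\int_{D_\sigma}\psi_\eps\!\big(s\,\ell_\sigma(y)\big)\dy\dH^1_\sigma,\qquad 0<s\leq1.
\]
Now $\psi_\eps\in C^1(0,+\infty)$ with $\psi_\eps'(\ell)=-4\int_\ell^{+\infty}\rho_\eps(r)\dr$, and since $\rho_\eps(r)=2rg_\eps(r)=\eta_\eps(r)/r$ with $\int_0^{+\infty}\eta_\eps=\tfrac12$ (the normalisation of \cref{lem:cvsphunif}, equivalently $\int_0^{+\infty}r\rho_\eps=\tfrac12$ from \cref{rmk:PerGbound}), one has $0\leq\psi_\eps\leq2$ and $|\ell\psi_\eps'(\ell)|\leq4\int_\ell^{+\infty}\eta_\eps\leq2$. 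Thus $\frac{\de}{\de s}[s\psi_\eps(s\ell)]=\psi_\eps(s\ell)+s\ell\,\psi_\eps'(s\ell)=4\int_{s\ell}^{+\infty}(r-2s\ell)\rho_\eps(r)\dr$ is bounded by $4$, so we may differentiate under the integral (or, equivalently, invoke the local Lipschitz continuity of $s\mapsto\Per_{G_\eps}(sE)$ from \cref{lem:derivpg} and the fundamental theorem of calculus) to obtain, for a.e. $s\in(\tfrac12,1)$,
\[
\frac{\de}{\de s}\calE_\eps(sE)=\int_{\bbS^1}\int_{D_\sigma}\bigg(\int_{s\ell_\sigma(y)}^{+\infty}\!\!\big(r-2s\ell_\sigma(y)\big)\rho_\eps(r)\dr\bigg)\dy\dH^1_\sigma.
\]

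The heart of the argument is to bound this inner integral. Fix a threshold $\tau\in(0,\tfrac34)$ and split each $D_\sigma$-integral into ``long'' slices $\{s\ell_\sigma(y)>\tau\}$ and ``short'' slices $\{s\ell_\sigma(y)\leq\tau\}$. On long slices, $\int_{s\ell}^{+\infty}r\rho_\eps\leq\int_\tau^{+\infty}r\rho_\eps=2\int_{\tau/\eps}^{+\infty}r^2g(r)\dr$ and $2s\ell\int_{s\ell}^{+\infty}\rho_\eps\leq\tfrac{2s\ell}{\tau}\int_\tau^{+\infty}\eta_\eps\leq\tfrac{5}{\tau}\int_\tau^{+\infty}\eta_\eps$ (using $s\ell\leq\diam E\leq\tfrac52$), and both quantities vanish as $\eps\to0$ for this fixed $\tau$: the first because $\int_0^{+\infty}r^2g(r)\dr=I_G^1/(2\pi)<\infty$ by \cref{Hint}, the second by the approximate-identity tail estimate established in the proof of \cref{lem:cvsphunif}; call $\kappa_\eps(\tau)\to0$ a resulting bound for the inner integrand there. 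On short slices, the inner integrand is bounded in absolute value by an absolute constant (at most $\tfrac32$, since $\int_0^{+\infty}r\rho_\eps=\tfrac12$ and $2s\ell\int_{s\ell}^{+\infty}\rho_\eps\leq1$), while the set of such $y$ has small measure: because $E\supseteq B_{1-\delta(\eps)}$, for $|y|<1-\delta(\eps)$ one has $\ell_\sigma(y)\geq2\sqrt{(1-\delta(\eps))^2-|y|^2}$, so (also using $E\subseteq B_{1+\delta(\eps)}$, $s>\tfrac12$ and $\delta(\eps)<\tfrac14$)
\[
\big|\{y\in D_\sigma:s\ell_\sigma(y)\leq\tau\}\big|\leq\big|\{y\in D_\sigma:\ell_\sigma(y)\leq2\tau\}\big|\leq4\delta(\eps)+\tfrac83\tau^2,
\]
whereas $|D_\sigma|\leq2(1+\delta(\eps))\leq\tfrac52$. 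Integrating over $\bbS^1$ yields $\big|\tfrac{\de}{\de s}\calE_\eps(sE)\big|\leq C(\eps,\tau):=2\pi\big[\tfrac32\big(4\delta(\eps)+\tfrac83\tau^2\big)+\tfrac52\kappa_\eps(\tau)\big]$, uniformly in $s\in(\tfrac12,1)$ and in the admissible $E$.

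Finally, integrating in $s$ on $(t,1)$ gives $|\calE_\eps(tE)-\calE_\eps(E)|\leq C(\eps,\tau)(1-t)$, whence
\[
\calF_{\gamma,\eps}(tE)-\calF_{\gamma,\eps}(E)\leq(1-t)\Big[\gamma\,C(\eps,\tau)-\tfrac{3\pi}{2}(1-\gamma)\Big],\qquad t\in(\tfrac12,1).
\]
I would then first fix $\tau=\tau(\gamma)$ small enough that $8\pi\gamma\,\tau^2<\tfrac{\pi}{2}(1-\gamma)$ and $\tau<\tfrac34$, and then choose $\ol{\eps}_1=\ol{\eps}_1(G,\gamma)>0$ so small that for every $\eps<\ol{\eps}_1$ both $12\pi\gamma\,\delta(\eps)<\tfrac{\pi}{2}(1-\gamma)$ and $5\pi\gamma\,\kappa_\eps(\tau)<\tfrac{\pi}{2}(1-\gamma)$ (possible since $\delta(\eps)\to0$ and $\kappa_\eps(\tau)\to0$ as $\eps\to0$); then $\gamma C(\eps,\tau)<\tfrac{3\pi}{2}(1-\gamma)$, the bracket is negative, and since $1-t>0$ we get $\calF_{\gamma,\eps}(tE)<\calF_{\gamma,\eps}(E)$ strictly for all $t\in(\tfrac12,1)$. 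The main obstacle is the estimate of the third paragraph, namely balancing the short slices — on which $\psi_\eps$ is not small, but which, by convexity and the sandwiching, are few — against the long slices — which are uncontrolled in number, but on which the integrand is small thanks to the concentration of $\rho_\eps$ and the finiteness of $I_G^1$.
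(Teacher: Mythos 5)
Your proof is correct, but it follows a genuinely different route from the paper's. The paper differentiates $t\mapsto\Per_{G_\eps}(tE)$ via \cref{lem:derivpg} and reduces the claim to the lower bound $\wt{P}_{G_\eps}(E)\geq(1-\tau)P(E)$ of \cref{lem:mincone}; the delicate point there is the geometric \cref{lem:minalmflat}, which uses convexity and the trapping between $B_{1\pm\delta(\eps)}$ to show that $\calH^1$-a.e.\ boundary point of $E$ sees a fixed circular sector of a small disk inside $E$, so that the boundary integral defining $\wt{P}_{G_\eps}$ nearly reproduces the identity $I_G^1\bfK_{1,2}P(E)=P(E)$. You instead work with the critical energy $\calE_\eps=P-\Per_{G_\eps}$ through the two-dimensional slicing representation of \cref{cor:expecrit} and \cref{lem:critenergy1}, which for convex $E$ collapses to the explicit one-variable profile $\psi_\eps(\ell)=4\int_\ell^{+\infty}(r-\ell)\rho_\eps(r)\dr$ evaluated at chord lengths; the only geometric input is then the elementary observation that chords of length at most $2\tau$ of a convex set containing $B_{1-\delta(\eps)}$ occupy a set of measure $O(\delta(\eps)+\tau^2)$ in each projection. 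Both arguments ultimately show that the scaling derivative of $\Per_{G_\eps}(sE)$ deviates from $\frac{\de}{\de s}P(sE)=P(E)$ by $o(1)\cdot P(E)$ uniformly over admissible $E$, and both split into a short/near regime (controlled by the double ball inclusion) and a long/far regime (controlled by the tail of the first moment, i.e.\ $\int_{\tau/\eps}^{+\infty}r^2g(r)\dr\to0$); but your version bypasses \cref{lem:minalmflat}, \cref{lem:mincone} and the normal-vector convergence entirely, at the cost of re-deriving the scaling derivative of \cref{lem:derivpg} in sliced form. Your normalizations ($\int_0^{+\infty}r\rho_\eps=\tfrac12$, $\om_1=2$), the justification of differentiation under the integral (uniform bound $4$ on $\abs{\frac{\de}{\de s}[s\psi_\eps(s\ell)]}$), and the final choice of $\tau(\gamma)$ then $\ol{\eps}_1(G,\gamma)$ all check out, and only \cref{Hrad} and \cref{Hint} are used, as in the paper.
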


As a consequence, we obtain that minimizers of \cref{minrp} are necessarily convex for small $\eps$
in \mbox{dimension $n=2$}, that is \cref{mainthm:convexmin}:

\begin{proof}[Proof of {\cref{mainthm:convexmin}}]
Let $E_\eps\subsq\R^2$ be a minimizer of \cref{minrp} with $0<\eps<\eps_1$, \edit{where $\eps_1$ is
to be chosen later, smaller than $\eps_0(G,\gamma)$ and $\ol{\eps}_1(G,\gamma)$ given respectively
by \cref{thm:existmin} and \cref{lem:decconvscale}}. Thus, up to a Lebesgue-negligible set and a
translation, $E_\eps$ is connected and satisfies
\[
B_{1-\delta(\eps)}\subsq E_\eps\subsq \co(E_\eps)\subsq B_{1+\delta(\eps)}.
\]
By contradiction, let us assume that $E_\eps$ is not convex, so that $\abs{\co(E_\eps)}>\abs{B_1}$.
We have ${\abs{t_\eps\co(E_\eps)}=\abs{B_1}}$ with
$t_\eps\coloneqq\sqrt{\abs{B_1}/\abs{\co(E_\eps)}}$. \edit{Choosing $\eps_1=\eps_1(G,\gamma)$ small
enough, we have $1/2\leq(1+\delta(\eps))^{-1}<t_\eps<1$. Since} $\eps_1<\ol{\eps}_1(G,\gamma)$, by
\cref{lem:decconvscale}, we find
$\calE_{\gamma,\eps}(t_\eps\co(E_\eps))<\calE_{\gamma,\eps}(\co(E_\eps))$.
Now, since $E_\eps$ is connected, there holds
$\calE_{\gamma,\eps}(\co(E_\eps))\leq\calE_{\gamma,\eps}(E_\eps)$ by \cref{prp:decconvex}, hence
$\calE_{\gamma,\eps}(t_\eps\co(E_\eps))<\calE_{\gamma,\eps}(E_\eps)$. This contradicts the
minimality of $E_\eps$, whence $E_\eps$ is convex.
\end{proof}

\subsection{Proof of \texorpdfstring{{\cref{lem:decconvscale}}}{Lemma 3.8}}


In order to prove that $\calF_{\gamma,\eps}(\co(E_\eps))$ (strictly) decreases by scaling
down~$\co(E_\eps)$ by $t_\eps$ for small $\eps$, we need to estimate the term
$\wt{\Per}_{G_\eps}(t_\eps\co(E_\eps))$ appearing when applying \cref{lem:derivpg}.
While it is not so difficult to see that for a fixed set $E$ with $C^1$ boundary,
$\wt{\Per}_{G_\eps}(E)$ converges to $P(E)$ as $\eps$ vanishes, here we do not have uniform
regularity estimates on minimizers of~\cref{minrp}. \edit{The lack of such uniform regularity
estimates as $\eps$ goes to $0$ is the main obstacle for proving that the unit ball is, up to
translations, the unique minimizer for $\eps$ small enough. With sufficient regularity, the
arguments of~\cref{sec:fuglede} would yield the result.}
However, here we can make use of the convexity of $\co(E_\eps)$ and the fact that it lies between two
\edit{disks} whose radii are close to $1$ to prove the following.

\begin{lem}\label{lem:minalmflat}
For any $\alpha\in(0,1)$, there exist positive constants $r=r(\alpha)$ and
$\ol{\eps}_2=\ol{\eps}_2(G,\gamma,\alpha)$ such that the following holds. If $E\subsq\R^2$ is a
convex set such that
\[
B_{1-\delta(\eps)} \subsq E \subsq B_{1+\delta(\eps)},
\]
with $0<\eps<\ol{\eps}_2$, and where the function $\delta$ is given by \cref{thm:existmin}, then we
have
\begin{equation}\label{minalmflat:res1}
\left\{ y\in B_r(x)~:~ \frac{x-y}{\abs{x-y}}\cdot \nu_E(x)\geq \alpha\right\}
\subsq E,\qquad\text{ for }\calH^1\text{-almost every }x\in\partial E.
\end{equation}
\end{lem}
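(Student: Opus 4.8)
The statement is purely geometric: it says that if $E$ is a convex set trapped between two balls of radii $1-\delta(\eps)$ and $1+\delta(\eps)$, then for $\calH^1$-almost every boundary point $x$, the truncated cone of half-aperture $\arccos\alpha$ with apex $x$ and axis $-\nu_E(x)$, intersected with a ball $B_r(x)$ of radius $r=r(\alpha)$ independent of $\eps$, lies entirely inside $E$. The plan is to exploit convexity together with the supporting hyperplane at $x$ and the \emph{exterior} ball that comes from the inner inclusion $B_{1-\delta(\eps)}\subseteq E$.

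Here is how I would carry it out. First, since $E$ is convex, at $\calH^1$-a.e.\ $x\in\partial E$ the outer normal $\nu_E(x)$ is well-defined and $E$ lies in the half-space $H_x^-:=\{y:(y-x)\cdot\nu_E(x)\leq 0\}$; equivalently, every $y$ with $(x-y)\cdot\nu_E(x)\geq\alpha\abs{x-y}$ lies in the open half-space $\{(y-x)\cdot\nu_E(x)<0\}$. So the cone condition already forces $y$ to be on the "inner" side of the supporting hyperplane. Second, I claim $\abs{x}$ is close to $1$: from $B_{1-\delta(\eps)}\subseteq E\subseteq B_{1+\delta(\eps)}$ we get $1-\delta(\eps)\leq\abs{x}\leq 1+\delta(\eps)$, and moreover the outer normal $\nu_E(x)$ makes a small angle with $x/\abs{x}$ — indeed, the convex set $E$ contains the ball $B_{1-\delta(\eps)}$, so the supporting hyperplane at $x$ cannot cut into that ball, which gives $\dist(0,H_x)\geq 1-\delta(\eps)$, i.e.\ $x\cdot\nu_E(x)\geq 1-\delta(\eps)$, and since $\abs{x}\leq 1+\delta(\eps)$ this pins down $\cos\angle(x,\nu_E(x))\geq\frac{1-\delta(\eps)}{1+\delta(\eps)}$, hence the angle is $O(\sqrt{\delta(\eps)})$. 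Third, and this is the geometric core: the key obstruction to a point $y\in B_r(x)$ in the cone leaving $E$ is that $\partial E$ could curve away from the supporting hyperplane. But since $E\supseteq B_{1-\delta(\eps)}$, near $x$ the boundary $\partial E$ is trapped between its supporting hyperplane $H_x$ and the sphere $\partial B_{1-\delta(\eps)}$ passing roughly below $x$; a cone of half-aperture $\arccos\alpha<\pi/2$ emanating from $x$ \emph{into} the region bounded by $H_x$ stays inside $B_{1-\delta(\eps)}$ — and hence inside $E$ — as long as we do not travel too far, and "too far" is a distance $r$ depending only on $\alpha$ (and not on $\eps$) once $\delta(\eps)$ is small enough. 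Concretely: the inner ball $B_{1-\delta(\eps)}$ has a point on its boundary at (angular) distance $O(\sqrt{\delta(\eps)})$ from $x$; the tangent cone to that sphere at that point, of half-aperture $\arccos\alpha$, reaches a definite distance before exiting the ball, and I would take $r(\alpha)$ to be, say, half of that, absorbing the $O(\sqrt{\delta(\eps)})$ discrepancy by shrinking $\ol\eps_2$.

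The main obstacle is precisely that third step — making the "cone into a convex body stays inside" argument quantitative and uniform in $\eps$. The cleanest way is: replace $E$ by the inner ball and show directly that for any $z\in\partial B_{1-\delta}$ (playing the role of the point of $\partial B_{1-\delta(\eps)}$ nearest to $x$) and any direction $\tau$ with $\tau\cdot(z/\abs{z})\leq-\alpha$, the segment $\{z+s\tau:0\leq s\leq r\}$ stays in $\ol B_{1-\delta}$ provided $r\leq r(\alpha):=2\alpha(1-\delta)$ or so — this is an elementary computation with $\abs{z+s\tau}^2=\abs{z}^2+2s\,z\cdot\tau+s^2\leq\abs{z}^2-2s\alpha\abs{z}+s^2$, which is $\leq\abs{z}^2$ for $0\leq s\leq 2\alpha\abs{z}$. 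Then one transfers this back from $z$ to $x$ and from $\nu_E(x)$ to $x/\abs{x}$, paying small-angle corrections controlled by $\delta(\eps)\to 0$; choosing $\ol\eps_2$ small enough that all these corrections reduce the admissible radius by at most a factor $2$, and shrinking the cone aperture from $\alpha$ to, say, $\alpha/2$ in the intermediate estimates if needed, yields \cref{minalmflat:res1} with $r=r(\alpha)>0$ depending only on $\alpha$. I would also note at the outset that for a convex set, the "$\calH^1$-a.e.\ $x$" is only needed so that $\nu_E(x)$ exists and $x\in\partial^*E$; no finer regularity of $\partial E$ is used, since the whole argument rests on the two ball inclusions and convexity alone.
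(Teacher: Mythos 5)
Your plan is correct and follows the same two-step architecture as the paper's proof: first show that $\nu_E(x)$ is within $O(\sqrt{\delta(\eps)})$ of $x/\abs{x}$ (your supporting-hyperplane derivation $x\cdot\nu_E(x)=\dist(0,H_x)\geq 1-\delta(\eps)$ is in fact a slightly cleaner route to the exact same bound the paper gets via the angular-diameter/tangent-line argument), then reduce to a cone with axis $x/\abs{x}$ and halved aperture and show that cone sits inside $E$ using only $B_{1-\delta(\eps)}\subsq E$ and convexity. Where you diverge is the execution of the second step: the paper intersects $\partial B_r(x)$ with $\partial B_{1-\delta(\eps)}$, forms the circular sector $C(x,r)$, and computes its half-angle explicitly (the $h$, $e$ computation), whereas you run the elementary chord estimate $\abs{z+s\tau}^2\leq\abs{z}^2-2s\alpha\abs{z}+s^2$ ray by ray. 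Your version is arguably more streamlined, but note the one point that genuinely needs the convexity of $E$ rather than just the inner ball: since $x$ lies \emph{outside} $\overline{B}_{1-\delta(\eps)}$ (by up to $2\delta(\eps)$), the ray $s\mapsto x+s\tau$ only enters $\overline{B}_{1-\delta(\eps)}$ after a distance of order $\delta(\eps)/\alpha$, so its initial piece is not captured by the chord estimate; you must fill it in by observing that the segment joining $x\in\partial E$ to any point of $B_{1-\delta(\eps)}\subsq E$ lies in $E$. This is exactly the role convexity plays in the paper's ``$C(x,r)\subsq E$'' conclusion, and your closing remark that the argument rests on the two inclusions and convexity alone shows you have this available; it just needs to be said explicitly when you ``transfer from $z$ to $x$''.
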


\begin{proof}
We proceed in two steps.\\[4pt]
\textit{Step 1.}
We show that the normal vector $\nu_E(x)$ converges uniformly for $\calH^1$-almost every
$x\in\partial E$ to $\nu_{B_1}(x)$ as $\eps$ vanishes, in the sense that
\begin{equation}\label{minalmflat:eq1}
\abs*{\nu_E(x)-\frac{x}{\abs{x}}}\leq 2\sqrt{\frac{\delta(\eps)}{1+\delta(\eps)}},
\qquad \text{ for }\calH^1\text{-almost every }x\in\partial E.
\end{equation}
The simple geometric argument is well-known (see e.g. \autocite{Oss1987,Fug1989}), but we state it
here for the reader's convenience.
Let $\theta_\eps\in (0,\pi)$ be the angular diameter of the \edit{disk} $B_{1-\delta(\eps)}$ from
any point $x\in\partial B_{1+\delta(\eps)}$, that is, $\theta_\eps$ is the non-oriented angle
between the two lines passing by $x\in \partial B_{1+\delta(\eps)}$ and tangent
to~$B_{1-\delta(\eps)}$.  Then we have
$\sin\left(\theta_\eps/2\right)=(1-\delta(\eps))/(1+\delta(\eps))$.
Let $x\in\partial E$, and let $\vphi\in (0,\pi/2)$ be the non-oriented angle between $x$ and
some tangent line to $E$ at $x$.
Since $E$ is convex and $B_{1-\delta(\eps)}\subsq E$, the tangent cone to $E$ at $x$ cannot
intersect $B_{1-\delta(\eps)}$, and since $\abs{x}\leq 1+\delta(\eps)$, we find
\[
\sin\vphi\geq\sin\left(\frac{\theta_\eps}{2}\right)\frac{1-\delta(\eps)}{1+\delta(\eps)}.
\]
By convexity of $E$, for $\calH^1$-almost every $x\in\partial E$, there is a unique tangent line to
$E$ at $x$, and writing
\[
\abs*{\nu_E(x)-\frac{x}{\abs{x}}}^2 = 2(1-\sin(\vphi))
=\frac{4\delta(\eps)}{1+\delta(\eps)}
\]
we deduce \cref{minalmflat:eq1}.\\[4pt]
\textit{Step 2.}
Let $\alpha\in (0,1)$, and let $r(\alpha)$, $\ol{\eps}_2(G,\gamma,\alpha)$ to be
fixed later. Then let $E\subsq\R^2$ be as in the statement of the lemma, with $0<\eps<\ol{\eps}_2$.
As a consequence of Step 1, we may assume that for any $\eps$ small enough depending only on $G$,
$\gamma$ and $\alpha$, we have
\[
\abs*{\frac{x-y}{\abs{x-y}}\cdot\left(\nu_E(x)-\frac{x}{\abs{x}}\right)}\leq\frac{\alpha}{2},
\qquad\text{ for }\calH^1\text{-almost every } x\in\partial E,\text{ and for every }y\in\Ren,
\]
so that proving \cref{minalmflat:res1} amounts to showing that for~$r(\alpha)$
and $\ol{\eps}_2(G,\gamma,\alpha)$ small enough,
\begin{equation}\label{minalmflat:eq2}
\left\{ y\in B_r(x)~:~ \frac{x-y}{\abs{x-y}}\cdot \frac{x}{\abs{x}}\geq \frac{\alpha}{2}\right\}
\subsq E,\qquad\forall x\in\partial E.
\end{equation}
We assume that $r\in(0,1/2)$ and then that $\ol{\eps}_2=\ol{\eps}_2(G,\gamma,r)\in (0,r/2)$
is such that $\delta(\eps)<1/4$ whenever $0<\eps<\ol{\eps}_2$. Let
$x=\abs{x}(\cos\psi,\sin\psi)\in\partial E$ with $\psi\in[0,2\pi)$. Since $x\in
B_{1+\delta(\eps)}\setminus B_{1-\delta(\eps)}$, the set~$\partial B_r(x)\cap B_{1-\delta(\eps)}$ is
made of exactly two points $A$ and $B$. Let us denote by $\vphi(x,r)\in(0,\pi)$ the angle between
the segments $[Ax]$ and $[Bx]$. Then let us introduce the circular sector $C(x,r)$ of $B_r(x)$
delimited by $A$ and $B$ (see \cref{fig:figdisksec}), that is
\begin{figure}
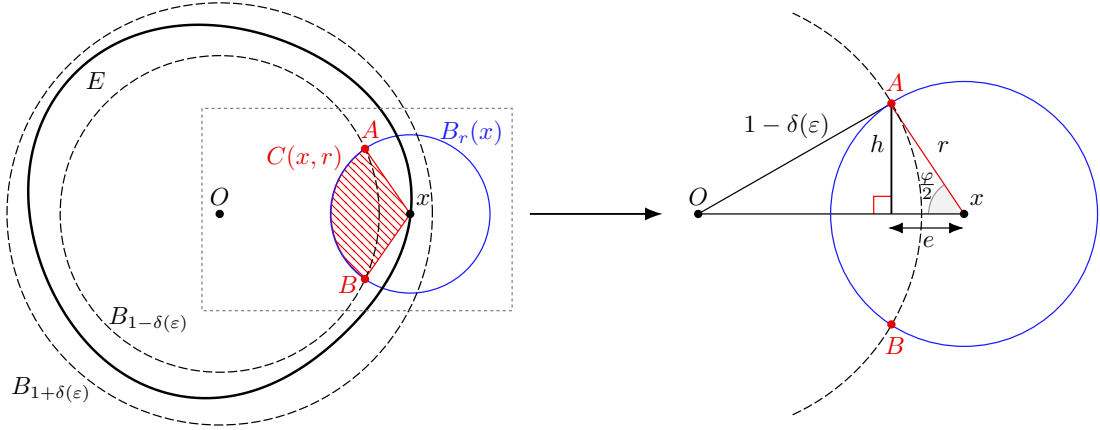

\centering
\includestandalone[width=.95\textwidth]{2d_figure1}
\caption{The situation in Step~2 of the proof of \cref{lem:minalmflat}. The red-dashed area is
$C(x,r)$.}
\label{fig:figdisksec}
\end{figure}
\begin{equation}\label{minalmflat:eq3}
\begin{aligned}
C(x,r)
\coloneqq& \left\{x-s\big(\cos(\theta+\psi),\sin(\theta+\psi)\big)~:~
0<s<r,~\abs{\theta}\leq\frac{\vphi(x,r)}{2}\right\}\\
=& \left\{y\in B_r(x)~:~ \frac{x-y}{\abs{x-y}}\cdot\frac{x}{\abs{x}}\geq
\cos\left(\frac{\vphi(x,r)}{2}\right) \right\}.
\end{aligned}
\end{equation}
We claim that for $\ol{\eps}_2$ small enough, $\sin\left(\vphi(x,r)/2\right)\geq
\sqrt{1-r^2}$.
The situation is as in \cref{fig:figdisksec}, where we introduce the lengths $h$ and $e$. We have
\[
\left\{\begin{array}{rl}
h^2+e^2 &= r^2\\
(\abs{x}-e)^2+h^2 &= \big(1-\delta(\eps)\big)^2
\end{array}\right.,
\]
which gives, after computation,
\[
h^2=r^2\left[1-\frac{1}{4\abs{x}^2}\left(r^2+2\big(\abs{x}^2-(1-\delta(\eps))^2\big)
+\frac{1}{r^2}\big(\abs{x}^2-(1-\delta(\eps))^2\big)^2 \right)\right].
\]
Recalling that $1-\delta(\eps)\leq\abs{x}<1+\delta(\eps)$ and $r<1/2$, this implies
\[
\begin{aligned}
h^2
\geq r^2-\frac{r^4}{4\big(1+\delta(\eps)\big)^2}\left[1+\frac{4\delta(\eps)}{r^2}
+\left(\frac{4\delta(\eps)}{r^2}\right)^2\right]
&\geq r^2\left(1-r^2\right),
\end{aligned}
\]
provided $\ol{\eps}_2=\ol{\eps}_2(G,\gamma,r)$ is chosen small enough.
Thus, the angle $\vphi(r,x)$ satisfies
\[
\sin\left(\frac{\vphi(r,x)}{2}\right)=\frac{h}{r}\geq \sqrt{1-r^2},
\]
which proves the claim.
Choosing $r=r(\alpha)$ and $\ol{\eps}_2(G,\gamma,r)$ small enough, we deduce
$\cos\left(\vphi(r,x)/2\right) \leq \frac{\alpha}{2}$. By convexity of $E$ and the fact that
$B_{1-\delta(\eps)}\subsq E$, we have $C(x,r)\subsq E$, hence \cref{minalmflat:eq2} holds, in view
of~\cref{minalmflat:eq3}. This concludes the proof.
\end{proof}

Then we can estimate $\wt{P}_{G_\eps}(E)$ when $E\subsq\R^2$ is a convex set lying between the disks
$B_{1-\delta(\eps)}$ and~$B_{1+\delta(\eps)}$, for any $\eps$ small enough.

\begin{lem}\label{lem:mincone}
For any $\tau>0$, there exists $\ol{\eps}_3=\ol{\eps}_3(G,\gamma,\tau)>0$ such that the following
holds. If $E\subsq\R^2$ is a convex set such that
\[
B_{1-\delta(\eps)} \subsq E \subsq B_{1+\delta(\eps)},
\]
with $0<\eps<\ol{\eps}_3$, where the $\delta$ function is the one from \cref{thm:existmin}, then we
have
\[
(1-\tau)P(E)\leq\wt{P}_{G_\eps}(E)\leq P(E),
\]
where $\wt{P}_{G_\eps}(E)$ is defined by \cref{defReps}.
\end{lem}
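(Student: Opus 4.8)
The plan is to compute $\wt{P}_{G_\eps}(E)$ by Fubini, writing
\[
\wt{P}_{G_\eps}(E)=2\int_{\partial^* E}\left(\int_E G_\eps(x-y)\,(y-x)\cdot\nu_E(y)\dx\right)\dH^1_y,
\]
and to compare the inner integral, for fixed $y\in\partial^* E$, with the quantity one would obtain if the half-space $H_y^-\coloneqq\{x:(x-y)\cdot\nu_E(y)\leq 0\}$ replaced $E$. Since $E$ is convex, $E\subseteq H_y^-$, so $(y-x)\cdot\nu_E(y)\geq 0$ on $E$, which already gives $\wt{P}_{G_\eps}(E)\geq 0$; and the same convexity gives $\wt{P}_{G_\eps}(E)\leq 2\int_{\partial^* E}\int_{H_y^-}G_\eps(x-y)(y-x)\cdot\nu_E(y)\dx\dH^1_y$. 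A change of variables $z=\eps^{-1}(y-x)$ turns the inner integral over the half-space into $\int_{\{z\cdot\nu\geq 0\}}G(z)\,z\cdot\nu\dz$, which by radiality of $G$ equals $\tfrac12\bfK_{1,n}I_G^1=\tfrac12$ times $\bfK_{1,n}$ in the notation of the paper—more precisely, integrating in polar coordinates and using \ref{Hrad}, \ref{Hint}, this equals $\tfrac12$ for $n=2$ after accounting for $\bfK_{1,2}$. Carrying the constant through, $2\int_{\partial^* E}\int_{H_y^-}\dots=P(E)$, giving the upper bound $\wt{P}_{G_\eps}(E)\leq P(E)$ for \emph{all} $\eps>0$ and every convex $E$.

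For the lower bound $(1-\tau)P(E)\leq\wt{P}_{G_\eps}(E)$, the idea is to show that, for $\eps$ small, the inner integral over $E$ captures almost all of the inner integral over $H_y^-$. The discrepancy is $2\int_{\partial^*E}\int_{H_y^-\setminus E}G_\eps(x-y)(y-x)\cdot\nu_E(y)\dx\dH^1_y$, and I would bound it by splitting $H_y^-\setminus E$ according to distance from $y$: the part with $|x-y|<r$ (for the $r=r(\alpha)$ from \cref{lem:minalmflat}) and the part with $|x-y|\geq r$. For the far part $|x-y|\geq r$, bound $(y-x)\cdot\nu_E(y)\leq|x-y|$ and use that $\int_{|z|\geq \eps^{-1}r}|z|G(z)\dz\to 0$ as $\eps\to0$ by the finiteness of $I_G^1$; this contributes $o(1)\cdot P(E)$ uniformly. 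For the near part, \cref{lem:minalmflat} is exactly the tool: it says that for $\calH^1$-a.e.\ $x\in\partial E$ (applied at the point $y$, i.e.\ swapping roles), every $y'\in B_r(x)$ with $\tfrac{x-y'}{|x-y'|}\cdot\nu_E(x)\geq\alpha$ lies in $E$; equivalently, the points of $B_r(y)$ that are in $H_y^-\setminus E$ all satisfy $\tfrac{y-x}{|y-x|}\cdot\nu_E(y)<\alpha$, i.e.\ they lie in a thin cone of half-angle $\approx\pi/2$ hugging the tangent plane. On that cone, after rescaling by $\eps$, the integral $\int_{\{z\cdot\nu<\alpha|z|,\,|z|<\eps^{-1}r\}}G(z)\,z\cdot\nu\dz\leq\alpha\int_{\R^n}|z|G(z)\dz=\alpha\cdot\tfrac{1}{\bfK_{1,n}}$, which is small if $\alpha$ is small. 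So I would first fix $\tau$, then choose $\alpha=\alpha(\tau)$ small enough that $\alpha/\bfK_{1,2}$ is a small fraction of the target, which fixes $r=r(\alpha)$ via \cref{lem:minalmflat}, and finally choose $\ol\eps_3$ small enough that both the far-part tail and the error term in Step~1 of \cref{lem:minalmflat} (the $\sqrt{\delta(\eps)/(1+\delta(\eps))}$ deviation of $\nu_E$) are controlled; combining, the discrepancy is at most $\tau\,P(E)$.

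The main obstacle I anticipate is bookkeeping the two small-parameter choices in the right order—$\tau$ fixed first, then $\alpha(\tau)$, then $r(\alpha)$ coming out of \cref{lem:minalmflat}, then $\ol\eps_3$ depending on all of these and on $G,\gamma$—while making sure the convexity of $E$ is genuinely used (it is needed both for $E\subseteq H_y^-$, giving nonnegativity and the clean upper bound, and for \cref{lem:minalmflat} to apply). A secondary technical point is that $\wt{P}_{G_\eps}$ is defined via the reduced boundary $\partial^* E$, so one should note that for a bounded convex set $\partial^* E=\partial E$ up to $\calH^{1}$-negligible sets and $\nu_E$ is the classical outer normal $\calH^1$-a.e., so \cref{lem:minalmflat} applies verbatim; and one must check the rescaled half-space integral $\int_{\{z\cdot\nu\ge0\}}G(z)(z\cdot\nu)\dz$ indeed equals $\tfrac12 P^{-}$-normalization consistent with \cref{prp:boundPerG}, which follows from the same polar-coordinate computation that yields $\Per_{G_\eps}(\text{halfspace-like})$. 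None of these steps is deep, but the interplay of the quantifiers is where care is required.
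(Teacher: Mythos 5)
Your proposal is correct and follows essentially the same route as the paper: the same half-space comparison and polar-coordinate computation for the upper bound, and the same near/far splitting at scale $r(\alpha)$ combined with \cref{lem:minalmflat} and the tail of $I_G^1$ for the lower bound, with the quantifiers ordered as you describe. The only cosmetic difference is that you bound the discrepancy over $H_y^-\setminus E$ from above by $2\alpha I_G^1\,P(E)$, whereas the paper bounds the integral over the good cone inside $E$ from below by $\sqrt{1-\alpha^2}\,P(E)$; these are equivalent since the full half-space integral equals $P(E)$.
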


\begin{proof}
Let $E$ be as in the statement of the lemma, with $0<\eps<\ol{\eps}_3(G,\gamma,\tau)$, where
$\ol{\eps}_3$ is to be fixed later. Recall that since $E$ is a bounded convex set, it is a set of
finite perimeter and its topological boundary is $\calH^1$-equivalent to $\partial^* E$.
We proceed in two steps.\\[4pt]
\textit{Step 1. Upper bound.}
First, note that by convexity of $E$, we have
\[
(x-y)\cdot\nu_E(x)\geq 0,\qquad\text{ for }\calH^1\text{-almost every }x\in\partial E,\text{ and
every } y\in E,
\]
and, defining for $\calH^1$-almost every $x\in\partial E$,
\[
H_x:= \Big\{ y\in \Ren ~:~ (x-y)\cdot \nu_E(x)>0\Big\},
\]
it holds
\[
\wt{P}_{G_\eps}(E)\leq 2\int_{\partial E} \int_{H_x} G_\eps(x-y) (x-y)\cdot \nu_E(x)\dH_x^1\dy.
\]
Then, letting $g_\eps=\eps^{-3}g(\eps^{-1}\,\cdot)$, by a change of variable and the coarea
formula, we find
\[
\begin{aligned}
\wt{P}_{G_\eps}(E)
&\leq 2\int_{\partial E} \int_{\left\{z\,\in\,\R^2~:~z\cdot\nu_E(x)\,>\,0\right\}}
G_\eps(z)\big(z\cdot\nu_E(x)\big)\dz\dH_x^1\\
&=2\int_{\partial E}\int_0^{+\infty} g_\eps(t)t^2\left(\int_{\left\{\sigma\,\in\,\bbS^1~:~\sigma
\cdot\nu_E(x)\,>\,0\right\}} \sigma\cdot\nu_E(x)\dH^1_\sigma\right)\dt\dH^1_x\\
&=\int_{\partial E}\int_0^{+\infty} g_\eps(t)t^2\left(\int_{\bbS^{1}}
\abs{\sigma\cdot\nu_E(x)}\dH_\sigma^1\right)\dt\dH_x^1\\
&=\bfK_{1,2}\int_{\partial E}\abs{\bbS^1}\int_0^{+\infty} g_\eps(t)t^2\dt\dH_x^1
=I_G^1\bfK_{1,2}P(E)=P(E),
\end{aligned}
\]
where we also used the definition of $\bfK_{1,2}$ for the third equality, and \cref{Hint} for the
last one.\\[4pt]
\textit{Step 2. Lower bound.}
Let us first recall that by assumption \cref{Hint},
\begin{equation}\label{id_PE}
\begin{aligned}
P(E)
=4\int_{\partial E}\int_0^{+\infty} t^2g_\eps(t)\dt\dH^1
=2\int_{\partial E} \int_0^{+\infty} t^2g_\eps(t)\int_{\left\{\sigma\,\in\,\bbS^1~:~\sigma\cdot
e\,>\,0\right\}} (\sigma\cdot e) \dH^1_\sigma\dt\dH^1,
\end{aligned}
\end{equation}
for all $ e\in\bbS^1$.
Let $\alpha\in(0,1)$ to be chosen later, and $\theta_0\coloneqq \arcsin(\alpha)$.
Then let $r=r(\alpha)>0$ and~$\ol{\eps}_2(G,\gamma,\alpha)$ given by \cref{lem:minalmflat}. Assume
that $\ol{\eps}_3<\ol{\eps}_2$.
Let us write
\[
\begin{aligned}
\wt{P}_{G_\eps}(E)
&=2\int_{\partial E} \int_{E\,\cap\,\{\abs{x-y}\,<\,r\}} G_\eps(x-y)\,(x-y)\cdot\nu_E(x)\dy\dH^1_x\\
&\hphantom{=\int_{\partial E} \int_E}
+2\int_{\partial E} \int_{E\,\cap\,\{\abs{x-y}\,\geq\, r\}}
G_\eps(x-y)\,(x-y)\cdot\nu_E(x)\dx\dH^1_y \eqqcolon
\wt{P}_{G_\eps}^{(1)}(E)+\wt{P}_{G_\eps}^{(2)}(E).
\end{aligned}
\]
Since $\eps<\ol{\eps}_2$, by \cref{lem:minalmflat}, we have
\[
\begin{aligned}
\wt{P}_{G_\eps}^{(1)}(E)
&\geq 2\int_{\partial E} \int_{\left\{y\,\in\, B_r(x)~:~
\frac{x-y}{\abs{x-y}}\cdot\nu_E(x)\,\geq\,\alpha\right\}}
G_\eps(x-y)(x-y)\cdot\nu_E(x)\dy\dH_x^1\\
&=2\int_{\partial E} \int_0^{+\infty} g_\eps(t)t^2
\int_{\left\{\sigma\,\in\,\bbS^1~:~\alpha\,\leq\,\sigma\cdot
\nu_E(x)\,\leq\, 1\right\}} \sigma\cdot\nu_E(x)\dH^1_\sigma\dt\dH^1_x.
\end{aligned}
\]
We compute
\[
\int_{\{\sigma\,\in\,\bbS^1~:~\alpha\,\leq\,\sigma\cdot\nu_E(x)\,\leq\, 1\}}
\sigma\cdot\nu_E(x)\dH^1_\sigma
=2\int_0^{\arccos\alpha} \cos\theta\dd\theta
=2\sin(\arccos\alpha)
=\sqrt{1-\alpha^2}.
\]
Together with \cref{id_PE}, this leads do
\[
\begin{aligned}
\wt{P}_{G_\eps}^{(1)}(E)
\geq \sqrt{1-\alpha^2}P(E).
\end{aligned}
\]
Choosing $\alpha$ small enough, depending only on $\tau$, we then find
\[
\wt{P}_{G_\eps}^{(1)}(E)\geq \left(1-\frac{\tau}{2}\right)P(E).
\]
Since $\alpha=\alpha(\tau)$, we have $r=r(\tau)$, and $\ol{\eps}_2=\ol{\eps}_2(G,\gamma,\tau)$.
Now with $r$ fixed, up to choosing $\ol{\eps}_3<\ol{\eps}_2$ even smaller if needed, there
holds
\[
\abs{\wt{P}_{G_\eps}^{(2)}(E)} \leq P(E)\int_{B_r^\compl} \abs{x}G_\eps(x)\dx
= P(E)\int_{B_{r/\eps}^\compl} \abs{x}G(x)\dx \leq \frac{\tau}{2}P(E),
\]
since the first moment of $G$ is finite. Hence, from our choice of $r$ and $\ol{\eps}_3$, we obtain
\[
\wt{P}_{G_\eps}(E)\geq (1-\tau)P(E),
\]
which concludes the proof.
\end{proof}


We are now in position to prove \cref{lem:decconvscale}.

\begin{proof}[Proof of {\cref{lem:decconvscale}}]
\edit{Let $\tau>0$ to be chosen later, and let $\ol{\eps}_1=\ol{\eps}_3(G,\gamma,\tau)/4$, where
$\ol{\eps}_3$ is given by \cref{lem:mincone}. Assume that $0<\eps<\ol{\eps}_1$.}
Recall that by \cref{lem:derivpg}, for almost every $t$, we have
\begin{equation}\label{decconvscale:eq1}
\begin{aligned}
\frac{\dd}{\dt} \left[\Per_{G_\eps}(tE)\right]
&= \frac{2}{t}\Per_{G_\eps}(tE)-\frac{1}{t}\wt{P}_{G_\eps}(tE),
\end{aligned}
\end{equation}
where $\wt{P}_{G_\eps}$ is defined by \cref{defReps}.
Changing variables, we deduce
\[
\begin{aligned}
\frac{1}{t}\wt{P}_{G_\eps}(tE)
&=2t^3\int_E\int_{\partial E} G_\eps(t(x-y))(y-x)\cdot\nu_E(y)\\
&=2\int_E\int_{\partial E} G_{(t^{-1}\eps)}(x-y)(y-x)\cdot\nu_E(y) \dH_y^1\dx
=\wt{P}_{G_{t^{-1}\eps}}(E).
\end{aligned}
\]
\edit{Let $\edit{\tau\coloneqq(1-\gamma)/(2\gamma)}$. By our choice of $\ol{\eps}_1$,
$t^{-1}\eps<\ol{\eps}_3$, thus by~\cref{lem:mincone}, we have}
\[
\frac{1}{t}\wt{P}_{G_\eps}(tE)\geq (1-\tau)P(E).
\]
With \cref{decconvscale:eq1} and \cref{prp:boundPerG}, this leads to
\[
\frac{\dd}{\dt} \left[\Per_{G_\eps}(tE)\right]
\leq \frac{2}{t}P(tE)-(1-\tau)P(E)
= (1+\tau)P(E).
\]
Hence, for every $t\in(1/2,1)$, by our choice of $\tau$, it follows
\[
\calF_{\gamma,\eps}(E)-\calF_{\gamma,\eps}(tE)
=(1-t) P(E)-\gamma\int_t^1 \frac{\dd}{\dd s} \left[\Per_{G_\eps}(sE)\right] \dd s
\geq (1-t)P(E)\big[1-\gamma(1+\tau)\big]>0.
\]
This proves the lemma.
\end{proof}

\subsection{Convex minimizers are nearly spherical sets}\label{subsec:convnearlysph}

In arbitrary dimension, it is classical to improve Hausdorff convergence of the boundary of
minimizers to Lipschitz convergence once convexity is established.
Here, we show that convex minimizers of \cref{minrp} are centered $t(\eps)$-nearly spherical sets
(see \cref{maindfn:nearlysph}), up to a translation, where the function $t$ vanishes in $0^+$.

\begin{prp}[Convex minimizers are nearly spherical sets]\label{prp:minsph}
There exists $\ol{\eps}_4=\ol{\eps}_4(n,G,\gamma)>0$ such that the following holds.
If $E_\eps\subsq\Ren$ is a convex minimizer of \cref{minrp} with $0<\eps<\ol{\eps}_4$, then, up to
a translation, we have
\[
\partial E_\eps=\Big\{ (1+u_\eps(x))x ~:~ x\in\bbS^{n-1} \Big\}
\]
and
\[
\int_{E_\eps} x\dx = 0,
\]
with $u_\eps\in \Lip(\bbS^{n-1})$, $\norm{u_\eps}_{L^\infty(\bbS^{n-1})}\leq C'\delta(\eps)$ and
$\norm{\grad_\tau\, u_\eps}_{L^\infty(\bbS^{n-1})}\leq C''\delta(\eps)^{1/2}$, where
$\delta=\delta(n,G,\gamma)$ is the function of \cref{thm:existmin} vanishing in $0^+$, and
$C',C''>0$ only depend on $n$.
\end{prp}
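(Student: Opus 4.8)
The plan is to rely on only two facts about the convex minimizer $E_\eps$: its convexity and the double inclusion $B_{1-\delta(\eps)}\subsq E_\eps\subsq B_{1+\delta(\eps)}$ valid up to a translation, furnished by \cref{thm:existmin} (so I would take $\ol{\eps}_4\leq\eps_0$). From these, the statement is the classical Osserman--Fuglede-type observation that a convex body sufficiently close to a ball is the graph over $\bbS^{n-1}$ of a small Lipschitz radial function (see \autocite{Oss1987,Fug1989}), made quantitative, together with a preliminary recentering at the barycenter. I would split the proof into three steps: recentering, the radial graph representation, and the gradient bound.

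First I would recenter. Starting from a translate with $B_{1-\delta(\eps)}\subsq E_\eps\subsq B_{1+\delta(\eps)}$, the barycenter $b_\eps\coloneqq\om_n^{-1}\int_{E_\eps}x\dx$ equals $\om_n^{-1}\int_{E_\eps\setminus B_{1-\delta(\eps)}}x\dx$ since the integral over the ball $B_{1-\delta(\eps)}$ vanishes by symmetry; using $\abs{E_\eps\setminus B_{1-\delta(\eps)}}=\om_n\bigl(1-(1-\delta(\eps))^n\bigr)\leq c_n\delta(\eps)$ and $\abs{x}\leq1+\delta(\eps)$ on $E_\eps$, one gets $\abs{b_\eps}\leq c_n'\delta(\eps)$ for dimensional constants. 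Translating once more by $-b_\eps$ I may then assume $\int_{E_\eps}x\dx=0$ and $B_{1-\eta(\eps)}\subsq E_\eps\subsq B_{1+\eta(\eps)}$ with $\eta(\eps)\coloneqq(1+c_n')\delta(\eps)$, which is meaningful provided $\ol{\eps}_4$ is small enough that $\eta(\eps)<\tfrac12$. For the radial graph, since $E_\eps$ is convex with $B_{1-\eta(\eps)}\subsq E_\eps$, the origin is interior, so the Minkowski gauge $\mu_\eps(y)\coloneqq\inf\{\lambda>0:y\in\lambda E_\eps\}$ is convex, positively $1$-homogeneous, and trapped between $(1+\eta(\eps))^{-1}$ and $(1-\eta(\eps))^{-1}$ on $\bbS^{n-1}$, hence Lipschitz on $\Ren\setminus\{0\}$; thus $\varrho_\eps\coloneqq1/\mu_\eps\in\Lip(\bbS^{n-1})$, $\partial E_\eps=\{\varrho_\eps(x)x:x\in\bbS^{n-1}\}$, and $u_\eps\coloneqq\varrho_\eps-1$ yields the stated representation with $\norm{u_\eps}_{L^\infty(\bbS^{n-1})}\leq\eta(\eps)\leq C'\delta(\eps)$.

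The core step is the $\delta(\eps)^{1/2}$ bound on $\grad_\tau u_\eps$, which is where both convexity and closeness to the ball enter. For $\calH^{n-1}$-a.e. $x\in\bbS^{n-1}$ the point $y=\varrho_\eps(x)x$ is a smooth point of $\partial E_\eps$, with outer unit normal $\nu_\eps(y)$; the supporting hyperplane there leaves $B_{1-\eta(\eps)}$ on the side of $E_\eps$ while $\abs{y}\leq1+\eta(\eps)$, so the tangent-cone computation from Step~1 of the proof of \cref{lem:minalmflat} --- which is dimension-free --- gives
\[
\abs*{\nu_\eps(y)-\frac{y}{\abs{y}}}^2\leq\frac{4\eta(\eps)}{1+\eta(\eps)}\leq 4\eta(\eps).
\]
On the other hand, differentiating the parametrization $x\mapsto\varrho_\eps(x)x$ at a smooth point yields the classical formula
\[
\nu_\eps(y)=\frac{\varrho_\eps(x)\,x-\grad_\tau\varrho_\eps(x)}{\sqrt{\varrho_\eps(x)^2+\abs{\grad_\tau\varrho_\eps(x)}^2}},
\]
so that the tangential part of $\nu_\eps(y)$ has squared norm $\abs{\grad_\tau\varrho_\eps(x)}^2/\bigl(\varrho_\eps(x)^2+\abs{\grad_\tau\varrho_\eps(x)}^2\bigr)$, whereas $y/\abs{y}=x$ is purely radial; hence this quantity is at most $\abs{\nu_\eps(y)-y/\abs{y}}^2\leq 4\eta(\eps)$. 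Choosing $\ol{\eps}_4$ so that in addition $4\eta(\eps)\leq\tfrac12$, this forces $\abs{\grad_\tau\varrho_\eps(x)}\leq C''\delta(\eps)^{1/2}$ for $\calH^{n-1}$-a.e. $x$, and since $u_\eps$ is Lipschitz and $\grad_\tau u_\eps=\grad_\tau\varrho_\eps$ a.e., this pointwise bound is exactly the claimed $L^\infty$ bound.

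I do not expect a genuine obstacle: this is the standard quantitative stability argument for convex bodies near a ball. The two points that need care are (i) the bookkeeping in the recentering step --- verifying that $\ol{\eps}_4$ can be fixed, depending only on $n$, $G$, $\gamma$ through $\delta$, so that all smallness conditions on $\eta(\eps)$ hold simultaneously; and (ii) the fact that $\partial E_\eps$ is only Lipschitz, not $C^1$, which is why the normal-vector identity in the last step must be invoked at $\calH^{n-1}$-a.e. smooth boundary point and then upgraded to a uniform bound via the Lipschitz regularity of $\varrho_\eps$.
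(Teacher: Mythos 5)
Your proposal is correct and follows essentially the same route as the paper: recentering at the barycenter with an $O(\delta(\eps))$ bound, the radial graph representation from convexity and the double inclusion, and the $\delta(\eps)^{1/2}$ gradient bound via the normal-vector deviation estimate of Step~1 of \cref{lem:minalmflat}. The only cosmetic difference is that you derive the gradient bound directly from the radial-graph formula for the normal, whereas the paper simply invokes Fuglede's inequality $(**)$, which encodes the same computation.
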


\begin{proof}
In the proof, we write $\norm{\cdot}_\infty$ for $\norm{\cdot}_{L^\infty(\bbS^{n-1})}$.
Let $E_\eps$ be a convex minimizer \cref{minrp} with $0<\eps<\ol{\eps}_4$, where $\ol{\eps}_4$ is to
be fixed later. If $\ol{\eps}_4<\eps_0$, where $\eps_0$ is given by \cref{thm:existmin}, up to a
translation and a negligible set, $E_\eps$ lies between the balls $B_{1-\delta(\eps)}$ and
$B_{1+\delta(\eps)}$. By convexity, this implies that the set $E_\eps$ itself (without the addition
or the subtraction of a negligible set) satisfies, up to a translation,
\[
B_{1-\delta(\eps)}\subsq E_\eps \subsq \ol{B}_{1+\delta(\eps)}.
\]
Setting $\displaystyle y_\eps:=-\frac{1}{\abs{B_1}}\int_{E_\eps}x\dx$, we have
\[
\int_{y_\eps+E_\eps} x\dx
=y_\eps\abs{E_\eps}+\int_{E_\eps} x\dx=y_\eps\abs{B_1}+\int_{E_\eps}x\dx=0.
\]
Notice that
\[
-\int_{E_\eps} x\dx=-\int_{B_{1+\delta(\eps)}}x\dx+\int_{B_{1+\delta(\eps)}\setminus E_\eps} x\dx
=\int_{B_{1+\delta(\eps)}\setminus E_\eps} x\dx
\]
thus
\[
\abs*{\int_{E_\eps} x\dx}\leq (1+\delta(\eps))\abs{B_{1+\delta(\eps)}\setminus E_\eps}
\leq \big(1+\delta(\eps)\big)\left[\big(1+\delta(\eps)\big)^n-\big(1-\delta(\eps)\big)^n\right]
\abs{B_1}
\leq C(n)\delta(\eps)
\]
provided that $\ol{\eps}_4$ is small enough depending only on $n$, $G$ and $\gamma$. Hence, up to
translating $E_\eps$ by $y_\eps$, we may assume that it satisfies
\begin{equation}\label{minspher:eq1}
B_{1-C'\delta(\eps)}\subsq E_\eps \subsq B_{1+C'\delta(\eps)},
\end{equation}
with $C'\coloneqq C(n)+1$ and is centered, that is,
\[
\int_{E_\eps} x\dx=0.
\]
By convexity of $E_\eps$, for every $x\in\bbS^{n-1}$, there is a unique point of intersection
$p_\eps(x)=t_\eps(x) x$ of~${\{tx~:~t>0\}}$ and $\partial E_\eps$, and by \cref{minspher:eq1},
$\abs{p_\eps(x)-x}\leq C'\delta(\eps)$. The map $p_\eps: \bbS^{n-1}\to\partial E_\eps$ is obviously
onto, so that, setting $u_\eps(x)=t_\eps(x)-1$, we have
\[
\partial E_\eps=\Big\{ (1+u_\eps(x))x ~:~ x\in\bbS^{n-1} \Big\},
\]
and $\norm{u_\eps}_\infty\leq C'\delta(\eps)$. In addition, the fact that $E_\eps$ is convex implies
$u_\eps\in \Lip(\bbS^{n-1})$. Moreover, for any~$x\in\bbS^{n-1}$, the distance between the normal
vector of $\partial E_\eps$ (which exists for $\calH^{n-1}$-almost every $x\in\partial E_\eps$) at
$p_\eps(x)$ and the vector $x$ (which is the normal vector of $\bbS^{n-1}$ at $x$) is controlled
by~$\norm{p_\eps-\id}_{\infty}=\norm{u_\eps}_{\infty}$, in view of Step 1 of the proof of
\cref{lem:minalmflat} (or simply by \autocite[Corollary~1]{Oss1987}), which gives a control of
$\norm{\grad_\tau\, u_\eps}_{\infty}$ by $\norm{u}_{\infty}$.  More precisely, by
\autocite[Inequality $(**)$]{Fug1989}, we have
\[
\norm{\grad_\tau\, u_\eps}_{\infty}
\leq 2\left(\frac{1+\norm{u_\eps}_{\infty}}{1-\norm{u_\eps}_{\infty}}\right)
\norm{u_\eps}_{\infty}^{\frac{1}{2}} \leq C''\delta(\eps)^{\frac{1}{2}},
\]
where we used the inequality $\norm{u_\eps}_\infty\leq C'\delta(\eps)\leq 1/2$ for the last
inequality, provided that $\ol{\eps}_4(n,G,\gamma)$ is chosen small, and defined $C''\coloneqq
6\sqrt{C'}$. This concludes the proof.
\end{proof}

Observe that \cref{mainthm:minnearlycirc} is an immediate corollary of
\cref{mainthm:convexmin,prp:minsph}.

\begin{rmk}\label{rmk:depeps12}
\edit{Let us comment on the dependence of the constants $\eps_1$ and $\eps_2$ respectively of
\cref{mainthm:convexmin} and \cref{mainthm:minnearlycirc} w.r.t. the parameter $\gamma$. Tracking
the constants, $\tau=(1-\gamma)/(2\gamma)$ in the proof of \cref{lem:decconvscale}, and
$\alpha=C \min(1,\sqrt{\tau})$ in the proof of \cref{lem:mincone}, for some universal constant $C$.
In \cref{lem:minalmflat}, $r=C\min(1,\alpha)$, and $\ol{\eps}_2$ is chosen so that $\delta(\eps)\leq
C\min(1,r^2)$ for every $0<\eps<\ol{\eps}_2$.  Combining all these with the asymptotic behavior of
$\delta(\eps)$ given in \cref{rmk:distball}, we deduce that we can choose
\[
\ol{\eps}_2=C(n,G)\min\Big(1,\Big(\frac{1-\gamma}{\gamma}\Big)^{\frac{3}{2}}\Big).
\]
Then, $\ol{\eps}_3$ of \cref{lem:mincone} must be smaller than $\ol{\eps}_2$ and satisfy
\[
\int_{B_{r/\ol{\eps}_2^\compl}} \abs{x}G(x)\dx\leq \frac{\tau}{2}.
\]
Assuming that $G(x)=O(\abs{x}^{-(n+1+\beta)})$ at infinity for some positive constant $\beta$, and
using that $r$ is chosen to be $C\min\big(1,\sqrt{(1-\gamma)/\gamma}\big)$,
we can choose
\[
\ol{\eps}_3=C(n,G)\min\Big(1,\Big(\frac{1-\gamma}{\gamma}\Big)^{\max\left(\frac{3}{2},\frac{1}{2}
+\frac{1}{\beta}\right)}\Big).
\]
In \cref{lem:decconvscale}, $\ol{\eps}_1=\ol{\eps}_3/4$, and in \cref{mainthm:convexmin},
$\eps_1$ is simply chosen smaller than $\ol{\eps}_1$. Notice that in \cref{prp:minsph},
$\ol{\eps}_4=C(n,G)\sqrt{(1-\gamma)/\gamma}$ in view of
\cref{rmk:distball}, and that $\eps_2$ of \cref{mainthm:minnearlycirc} must be smaller than
$\min(\eps_1,\ol{\eps}_4)$ and such that $\delta(\eps)^{1/2}< C(n)$. In the end, $\eps_1$ and
$\eps_2$ can be chosen to be}
\[
\edit{\eps_i=C(n,G)\min\Big(1,\Big(\frac{1-\gamma}{\gamma}\Big)^{\max\left(\frac{3}{2},\frac{1}{2}
+\frac{1}{\beta}\right)}\Big),\quad i\in\{1,2\}.}
\]
\end{rmk}

\section{Minimality of the unit ball among nearly spherical sets}\label{sec:fuglede}

This section is devoted to the proof of the minimality of the unit ball of $\Ren$ among $t$-nearly
spherical sets, for small $t$ and $\eps$.

\subsection{A Fuglede-type result for the nonlocal perimeter}

\edit{For a centered nearly spherical set $E_t$, we wish to estimate the quantity
$\calF_{\gamma,\eps}(E_t)-\calF_{\gamma,\eps}(B_1)$ from below. Thus, we need an estimate of
$P(E_t)-P(B_1)$ from below, and an estimate of $P_\eps(E_t)-P(B_1)$ from above. 
For the former we use the following lower bound.}

\begin{lem}\label{lem:fugledeper}
\edit{There exist positive constants $\ol{t}=\ol{t}(n)$ and $C=C(n)$ such that the following holds.
For any centered $t$-nearly spherical set $E_t$ with $0<t<\ol{t}$ described by the function $u$, we
have}
\[
\edit{P(E_t)\geq
P(B_1)+\frac{t^2}{2}\left(\norm{\grad_\tau\,u}_{L^2(\bbS^{n-1})}^2
-(n-1)\norm{u}_{L^2(\bbS^{n-1})}^2\right) -Ct^3\left(\norm{\grad_\tau\,
u}_{L^2(\bbS^{n-1})}^2+\norm{u}_{L^2(\bbS^{n-1})}^2\right).}
\]
\end{lem}

\edit{This is an improvement of the original lower bound of Fuglede \autocite[Theorem~1.2]{Fug1989}.
It can be found in \autocite[Proof of Theorem~3.1]{Fus2015}.\\
We establish a similar Taylor expansion for the nonlocal perimeter $\Per_{G_\eps}$. Let us point out
that similarly to fractional perimeters, fractional Sobolev-type seminorms associated with the
kernel $G_\eps$ (and its derivatives) naturally appear in the expansion.  However, these converge to
the $H^1$ seminorm as $\eps$ vanishes, so we choose here to control
$\Per_{G_\eps}(E_t)-\Per_{G_\eps}(B_1)$ directly in terms of the $H^1$ seminorm of $u$ on
$\bbS^{n-1}.$}

\begin{lem}\label{lem:fuglede}
Assume that $G$ satisfies \cref{Hrad,,Hint,,Hhighermoments}.
There exist positive constants $\ol{\eps}_5=\ol{\eps}_5(n,G)$ and $t_0=t_0(n)$ such that the
following holds.
If $E_t$ is a centered $t$-nearly spherical set with $0<t<t_0$ described by the function $u$, then
for any $0<\eps<\ol{\eps}_5$, we have
\begin{equation}\label{eq:resfuglede}
\begin{aligned}
\Per_{G_\eps}(E_t)
&\leq \Per_{G_\eps}(B_1)+\frac{t^2}{2}
\left(\big(1+Cq_\eta(\eps)\big)\norm{\grad_\tau\, u}_{L^2(\bbS^{n-1})}^2
-(n-1)\norm{u}_{L^2(\bbS^{n-1})}^2\right)\\
&\hphantom{\leq\Per{G_\eps}(B)+\frac{t^2}{2}\left(\big(1+CQ(\eta_\eps)\big)\right.}
+Ct^3\left(\norm{\grad_\tau\, u}_{L^2(\bbS^{n-1})}^2+\norm{u}_{L^2(\bbS^{n-1})}^2\right),
\end{aligned}
\end{equation}
where $C=C(n,G)$, $q_\eta$ is given by \cref{lem:cvsphunif}, and $q_\eta(\eps)\to 0$ as $\eps\to 0$.
\end{lem}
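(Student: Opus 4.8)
The plan is to write the nonlocal perimeter of the nearly spherical set $E_t$ as a double integral over $\bbS^{n-1}\times\bbS^{n-1}$ by passing to polar coordinates, and then Taylor-expand the integrand in the parameter $t$ up to third order. Concretely, writing $\partial E_t=\{(1+tu(x))x : x\in\bbS^{n-1}\}$, one has
\[
\Per_{G_\eps}(E_t)=2\iint_{E_t\times E_t^\compl} G_\eps(x-y)\dx\dy,
\]
and switching to spherical coordinates $x=\rho\,\omega$, $y=r\,\sigma$ with $\omega,\sigma\in\bbS^{n-1}$, the inner radial integrals run over $0<\rho<1+tu(\omega)$ and $r>1+tu(\sigma)$ (plus the symmetric contribution). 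After performing the two radial integrations one is left with a double integral over $\bbS^{n-1}\times\bbS^{n-1}$ of a function $\Phi(\omega,\sigma;t u(\omega),tu(\sigma))$ built from the radial profile $g$ of $G$ and its primitive(s); the first moment assumption \cref{Hint} and \cref{Hhighermoments} guarantee these primitives are finite and $\Phi$ is $C^3$ in the last two (small) arguments. The zeroth-order term reproduces $\Per_{G_\eps}(B_1)$, the first-order term vanishes by the volume constraint $\int_{\bbS^{n-1}}u=O(t)$ (more precisely by the centering/near-volume-preservation, exactly as in \autocite{FFM+2015}), and the quadratic term splits into a "tangential" piece and a "radial" piece.

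The key point is the identification of the quadratic term. Following the computations of \autocite[§2, §8]{FFM+2015}, the quadratic part of $\Per_{G_\eps}(E_t)-\Per_{G_\eps}(B_1)$ is, up to the $t^2/2$ factor, a sum of a multiple of $\int_{\bbS^{n-1}}u^2$ coming from the normalization of the kernel and the curvature of the sphere, and a nonlocal Gagliardo-type seminorm
\[
\iint_{\bbS^{n-1}\times\bbS^{n-1}}\frac{(u(\omega)-u(\sigma))^2}{\abs{\omega-\sigma}^2}\,\eta_\eps(\abs{\omega-\sigma})\dH^{n-1}_\omega\dH^{n-1}_\sigma,
\]
with $\eta_\eps$ exactly the approximate identity from \cref{lem:cvsphunif}; here \cref{Hhighermoments} (via \cref{rmk:condTHinfty}) supplies condition \cref{eq:Tcond} needed when $n=2$. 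Invoking the upper bound of \cref{lem:cvsphunif}, this seminorm is at most $(1+q_\eta(\eps))\norm{\grad_\tau u}_{L^2}^2$, and tracking the normalization constants (using \cref{Hint}, the value of $\bfK_{1,n}$, and $\bfK_{2,n-1}=\tfrac1{n-1}$ as in the proof of \cref{lem:cvsphunif}) one checks that the $u^2$ coefficient is precisely $-(n-1)$, and that the local-perimeter Fuglede expansion (Fuglede's $\frac35\norm{\grad_\tau u}^2 - \text{curvature term}$) is recovered in the limit $\eps\to0$. This yields the $\tfrac{t^2}{2}\big((1+Cq_\eta(\eps))\norm{\grad_\tau u}_{L^2}^2 - (n-1)\norm{u}_{L^2}^2\big)$ main term.

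Finally, the third-order remainder: by Taylor's theorem with integral remainder, the cubic term is bounded by $Ct^3$ times an integral of $\abs{\partial^3\Phi}$ against $(\abs{u(\omega)}+\abs{u(\sigma)})^3$; using $\norm{u}_{L^\infty}+\norm{\grad_\tau u}_{L^\infty}\le1$ one can trade one factor of $\abs{u(\omega)-u(\sigma)}$ for $\abs{\omega-\sigma}$ and absorb everything into $\norm{\grad_\tau u}_{L^2}^2+\norm{u}_{L^2}^2$, with a constant $C=C(n,G)$ depending on the finite moments $I_G^1$, $I_G^2$ and the decay $g'(r)=O(r^{-(n+1)})$ — this is where \cref{Hhighermoments} is essential. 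I expect the main obstacle to be precisely this bookkeeping of the remainder and of the quadratic constant for a \emph{general} kernel $G$: unlike the homogeneous Riesz case, the radial profile has no scaling symmetry, so the primitives of $g$ appearing after the radial integrations must be estimated carefully, and one must uniformly control the pieces of the integral coming from $\abs{\omega-\sigma}$ small (where the singularity of $G$ concentrates) versus $\abs{\omega-\sigma}$ of order $1$ — the uniform-decay hypothesis and condition \cref{eq:Tcond} are exactly what make these estimates uniform in $\eps$.
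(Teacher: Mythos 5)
Your overall strategy (polar coordinates, expansion in $t$, identification of the quadratic term with the Bourgain--Brezis--Mironescu-type seminorm of \cref{lem:cvsphunif}) matches the paper's, but two of your claims hide the actual substance of the proof, and one of them is most likely false as stated. First, you assert that after computing the second variation one "checks that the $u^2$ coefficient is precisely $-(n-1)$". For fixed $\eps>0$ the exact quadratic form of $\Per_{G_\eps}$ at the ball has a zeroth-order coefficient given by an $\eps$-dependent nonlocal curvature of the sphere, which converges to $n-1$ as $\eps\to 0$ but has no reason to equal it exactly; obtaining the clean $-(n-1)\norm{u}_{L^2}^2$ with no $\eps$-error attached to it is not a matter of "tracking normalization constants". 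The paper avoids this entirely by splitting $\Per_{G_\eps}(E_t)=t^2\vphi_\eps(t)+\psi_\eps(t)$, where $\psi_\eps(t)$ is the "diagonal" piece with both radial limits at $1+tu(x)$; a rotation-averaging identity shows $\psi_\eps(t)=\frac{1}{P(B_1)}\int_{\bbS^{n-1}}\Per_{G_\eps}(B_{1+tu(x)})\dH^{n-1}_x$, and since only an upper bound is needed one may invoke $\Per_{G_\eps}\leq P$ (\cref{prp:boundPerG}) and the volume constraint (\cref{lem:var1min}) to get exactly $P(B_1)-(n-1)\frac{t^2}{2}\norm{u}_{L^2}^2+O(t^3)$. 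If you insist on the exact second variation instead, you must carry an $\eps$-dependent error multiplying $\norm{u}_{L^2}^2$, which cannot simply be absorbed into the $Cq_\eta(\eps)\norm{\grad_\tau u}_{L^2}^2$ term.

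Second, your treatment of the error terms glosses over the main technical obstacle: in the non-constant part of the expansion the kernel (and, for the cubic remainder, $\grad G_\eps$) is evaluated at the \emph{perturbed} argument $x-y+st(ax-by)$ rather than at $x-y$, so \cref{lem:cvsphunif} does not apply directly, and "trading a factor of $\abs{u(x)-u(y)}$ for $\abs{x-y}$" does not reduce the integral to a standard Gagliardo seminorm. One must show, uniformly in $\eps$, that
\[
\iint_{\bbS^{n-1}\times\bbS^{n-1}}\int_0^1\int_0^1 (u(x)-u(y))^2\,k_\eps\big(\abs{\Phi_{stu}(x,y,r,\rho)}\big)\dr\dd\rho\dH^{n-1}_x\dH^{n-1}_y
\leq C\big(\norm{\grad_\tau u}_{L^2}^2+\norm{u}_{L^2}^2\big)
\]
with $k(r)=r\abs{g'(r)}$. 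The paper devotes \cref{lem:techlem1} to this: after localizing to hemispheres via stereographic projection, one performs the change of variables $(x,y)\mapsto(\Phi_{t\wt u}(x,y,r,\rho),(1-s)x+sy)$, proving its injectivity and a uniform lower bound on its Jacobian for $t$ small, and separately handles the region $\abs{x-y}>\tfrac14$ using the decay $k(r)=O(r^{-(n+1)})$ from \cref{Hhighermoments}. Without this (or an equivalent) argument your remainder estimate is unsupported, and it is precisely here that the hypotheses $I_G^2<\infty$ and $g'(r)=O(r^{-(n+1)})$ enter. Also note that \cref{Hhighermoments} provides only one radial derivative of $g$, so any expansion scheme must be arranged (as the paper's exact algebraic identity in its Step 1 is) so that at most one derivative of the kernel ever appears.
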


\edit{When $\eps$ vanishes, the quadratic terms from the Taylor expansion of
$\Per_{G_\eps}(E_t)-\Per_{G_\eps}(B_1)$ compensate exactly those of $P(B_1)-P(E_t)$, so that
the constant $\gamma$ must be smaller than one, and the expansion needs to be pushed to the third
order to obtain a useful estimate.}

\begin{proof}
In the proof, unless stated otherwise, $C$ denotes a positive constant depending \textsl{only on
$n$} and possibly changing from line to line.
For the sake of brevity we write $B$ for the open unit ball in $\Ren$, $\partial B$ for the unit
sphere $\bbS^{n-1}$, and $\norm{\cdot}_{\infty}$ for $\norm{\cdot}_{L^\infty(\bbS^{n-1})}$.
Let $0<\eps<\ol{\eps}_5$, where $\ol{\eps}_5=\ol{\eps}_5(n,G)$ is to be fixed later.\\
Let $t_0=t_0(n)<1/8$ to be fixed later, and let $E_t$ be a centered $t$-nearly spherical set
such that $\partial E_t=\left\{(1+tu(x))x~:~x\in\bbS^{n-1}\right\}$ with $0<t<t_0$. We proceed in
three steps.\\[4pt]
\textit{Step 1.} We rewrite $\Per_{G_\eps}(E_t)$ in a more convenient form,
introducing two terms that we will control from above in the next steps. Using polar coordinates, we
have
\[
\Per_{G_\eps}(E_t)=2\iint_{\partial B\times\partial B} \int_0^{1+tu(x)} \int_{1+tu(y)}^{+\infty}
G_\eps(rx-\rho y)r^{n-1}\rho^{n-1}\dr\dd\rho\dH^{n-1}_x\dH^{n-1}_y.
\]
By symmetry of $G$, we see that
\[
\begin{aligned}
\Per_{G_\eps}(E_t)&=\iint_{\partial B\times\partial B} \left(
\int_0^{1+tu(x)} \int_{1+tu(y)}^{+\infty} G_\eps(rx-\rho y)r^{n-1}\rho^{n-1}\dr\dd\rho\right.\\
&\hphantom{=\iint}
\left.+\int_0^{1+tu(y)} \int_{1+tu(x)}^{+\infty} G_\eps(rx-\rho y)r^{n-1}\rho^{n-1}\dr\dd\rho
\right)\dH^{n-1}_x\dH^{n-1}_y.
\end{aligned}
\]
Using the identity
\[
\int_0^b \int_a^{+\infty}~+\int_0^a \int_b^{+\infty}=\int_a^b \int_a^b~+\int_0^a \int_a^{+\infty}~
+\int_0^b\int_b^{+\infty},
\]
and the symmetry of $G$ yet again, we find
\[
\begin{aligned}
\Per_{G_\eps}(E_t)&=\iint_{\partial B\times\partial B} 
\int_{1+tu(y)}^{1+tu(x)} \int_{1+tu(y)}^{1+tu(x)} G_\eps(rx-\rho
y)r^{n-1}\rho^{n-1}\dr\dd\rho\dH^{n-1}_x\dH^{n-1}_y\\
&\hphantom{=\iint}
+2\iint_{\partial B\times\partial B}\int_0^{1+tu(x)} \int_{1+tu(x)}^{+\infty} G_\eps(rx-\rho
y)r^{n-1}\rho^{n-1}\dr\dd\rho\dH^{n-1}_x\dH^{n-1}_y.
\end{aligned}
\]
Changing variables, we deduce
\begin{equation}\label{fuglede:eq1}
\begin{aligned}
&\Per_{G_\eps}(E_t)\\
&\hphantom{\Per}=t^2\iint_{\partial B\times\partial B} \int_{u(y)}^{u(x)}\int_{u(y)}^{u(x)}
(1+ta)^{n-1}(1+tb)^{n-1}G_\eps\big(x-y+t(ax-by)\big)\da\db\dH_x^{n-1}\dH_y^{n-1}\\
&\hphantom{\Per}\hphantom{=t^2\iint_{\partial B\times}}
+2\iint_{\partial B\times\partial B}\int_0^{1+tu(x)} \int_{1+tu(x)}^{+\infty} G_\eps(rx-\rho
y)r^{n-1}\rho^{n-1}\dr\dd\rho\dH^{n-1}_x\dH^{n-1}_y\\
&\edit{\hphantom{\Per}\eqqcolon t^2 \vphi_\eps(t)+\psi_\eps(t).}
\end{aligned}
\end{equation}
This concludes Step 1.\\[4pt]
\textit{Step 2. Estimation of $\vphi_\eps(t)$.}
Note that by \cref{lem:cvsphunif,rmk:condTHinfty}, we have
\begin{equation}\label{fuglede:eq2}
\begin{aligned}
\vphi_\eps(0)
&=\iint_{\partial B\times\partial B} (u(x)-u(y))G_\eps(x-y)\dH_x^{n-1}\dH_y^{n-1}\\
&=\iint_{\partial B\times\partial B} \frac{(u(x)-u(y))^2}{\abs{x-y}^2}\eta_\eps(\abs{x-y})
\dH_x^{n-1}\dH_y^{n-1}\\
&\leq \big(1+Cq_\eta(\eps)\big)\norm{\grad_\tau\,u}_{L^2(\bbS^{n-1})}^2.
\end{aligned}
\end{equation}
Then let us write,
\begin{equation}\label{fuglede:eq3}
\begin{aligned}
&\vphi_\eps(t)-\vphi_\eps(0)\\
&=\iint_{\partial B\times\partial B} \int_{u(y)}^{u(x)}\int_{u(y)}^{u(x)}
\Big((1+ta)^{n-1}(1+tb)^{n-1}G_\eps\big(x-y+t(ax-by)\big)\\
&\hphantom{\iint_{\partial B\times\partial B} \int_{u(y)}^{u(x)}\int_{u(y)}^{u(x)}
\Big((1+ta)^{n-1}(1+tb)^{n-1}}-G_\eps(x-y)\Big)\da\db\dH_x^{n-1}\dH_y^{n-1}\\
&
=\iint_{\partial B\times\partial B} \int_{u(y)}^{u(x)}\int_{u(y)}^{u(x)}
\Big((1+ta)^{n-1}(1+tb)^{n-1}-1\Big)G_\eps\big(x-y\big)\da\db\dH_x^{n-1}\dH_y^{n-1}\\
&\hphantom{=\iint}
+\iint_{\partial B\times\partial B} \int_{u(y)}^{u(x)}\int_{u(y)}^{u(x)}
(1+ta)^{n-1}(1+tb)^{n-1}\Big(G_\eps\big(x-y+t(ax-by)\big)-G_\eps(x-y)\Big)\\
&\hphantom{=\iint}\hphantom{+\iint_{\partial B\times\partial B}
\int_{u(y)}^{u(x)}\int_{u(y)}^{u(x)}(1+ta)^{n-1}(1+tb)^{n-1}\Big(G_\eps(x-y}
\da\db\dH^{n-1}_x\dH^{n-1}_y\\
&\eqqcolon I_1(t)+I_2(t).
\end{aligned}
\end{equation}
On one hand, since $\norm{u}_\infty\leq 1$, on the domain of integration we have
\[
\abs*{(1+ta)^{n-1}(1+tb)^{n-1}-1}
\leq Ct\norm{u}_{\infty}
\leq Ct.
\]
Thus
\[
\begin{aligned}
I_1(t)
&\leq Ct\iint_{\partial B\times\partial B} \int_{u(x)}^{u(y)}\int_{u(x)}^{u(y)}
G_\eps\big(x-y\big)\da\db\dH_x^{n-1}\dH_y^{n-1}\\
&= Ct\iint_{\partial B\times\partial B} (u(x)-u(y))^2 G_{\eps}(x-y)\dH_x^{n-1}\dH_y^{n-1}\\
&= Ct\iint_{\partial B\times\partial B} \frac{(u(x)-u(y))^2}{\abs{x-y}^2}
\eta_{\eps}(\abs{x-y})\dH_x^{n-1}\dH_y^{n-1},
\end{aligned}
\]
where $\eta_\eps$ is defined as in \cref{lem:cvsphunif} so that
$g_\eps(r)=\eta_\eps(r)/(2r^2)$.
In view of \cref{lem:cvsphunif,rmk:condTHinfty} again, it follows
\begin{equation}\label{fuglede:eq5}
\begin{aligned}
I_1(t)
\leq Ct \big(1+q_\eta(\eps)\big)\int_{\partial B} \abs{\grad_\tau\, u}^2\dH^{n-1}
\leq Ct \norm{\grad_\tau\,u}_{L^2(\bbS^{n-1})}^2,
\end{aligned}
\end{equation}
for any $\eps$ small enough (depending only on $n$ and $G$).\\
Let us now bound the term $I_2(t)$. Integrating on a line (recall that $g$ is absolutely continuous
in $(0,+\infty)$ by \cref{Hhighermoments}), and using the inequality
$(1+ta)^{n-1}(1+tb)^{n-1}\leq 2^{2(n-1)}$ for any $0\leq \abs{a},\abs{b}\leq \norm{u}_\infty\leq 1$,
since $t<t_0<1$, we find
\begin{equation}\label{fuglede:eq3b}
I_2(t)\leq Ct\hspace{-2pt}\iint\limits_{\partial B\times\partial B}\hspace{-3pt}
\int_{u(x)}^{u(y)}\hspace{-3pt}\int_{u(x)}^{u(y)}\hspace{-3pt} \int_0^1\abs{\grad
G_\eps(x-y+st(ax-by))\cdot(ax-by)} \dd s\da\db\dH_x^{n-1}\dH_y^{n-1}.
\end{equation}
Observe that in \cref{fuglede:eq3b}, if the term $(x-y)$ inside $\grad G_\eps$ were not
perturbed by $st(ax-by)$, using the inequality
\begin{equation}\label{fuglede:eq3c}
\abs{ax-by}^2 = (a-b)^2+ab\abs{x-y}^2\leq \left(1+\norm{\grad_\tau\,
u}_\infty^2\right)\abs{x-y}^2\leq
2\abs{x-y}^2
\end{equation}
on the domain of integration, we would have
\[
I_2(t)\leq Ct\iint_{\partial B\times\partial B}
(u(x)-u(y))^2\abs{\grad G_\eps(x-y)}\abs{x-y} \dd s\dH_x^{n-1}\dH_y^{n-1},
\]
which we could estimate in terms of $\norm{\grad_\tau\, u}^2_{L^2(\bbS^{n-1})}$ by applying directly
\cref{lem:cvsphunif} with $t\mapsto t\abs{g'(t)}$ in place of $g$.
Here, to deal with this perturbation, we apply the technical \cref{lem:techlem1}, by showing 
that the right-hand side of \cref{fuglede:eq3b} is bounded by a term of the form
\[
Ct\int_0^1 \iint_{\partial B\times\partial B}\int_0^1\int_0^1 (u(x)-u(y))^2
k_\eps(\abs{\Phi_{stu}(x,y,r,\rho)})\dr\dd\rho\dH^{n-1}_x\dH^{n-1}_y\ds,
\]
where
\begin{equation}\label{fuglede:defPhi}
\Phi_{stu}(x,y,r,\rho)\coloneqq(x-y)+st\Big[\big(ru(x)+(1-r)u(y))\big)x-
\big(\rho u(y)+(1-\rho)u(x)\big)y\Big]
\end{equation}
is a small perturbation of $(x-y)$, and $k_\eps$ is a kernel defined further below.
Let us remark that, since~$G$ is radial, we have $\grad
G_\eps(x)=g_\eps'(\abs{x})x/\abs{x}$, thus
\begin{equation}\label{fuglede:eq4b}
\grad G_\eps(x-y+st(ax-by))\cdot(ax-by)
=g_\eps'(\abs{x-y+st(ax-by)})\frac{\frac{(a+b)}{2}\abs{x-y}^2+st\abs{ax-by}^2}{\abs{x-y+st(ax-by)}}.
\end{equation}
Then notice that, on the domain of integration, we have
\begin{equation}\label{fuglede:eq4a}
\begin{aligned}
\abs{x-y+st(ax-by)}^2
&=s^2t^2(a-b)^2+(1+sta)(1+stb)\abs{x-y}^2\\
&\geq (1+sta)(1+stb)\abs{x-y}^2-s^2t^2(u(x)-u(y))^2\\
&\geq \frac{9}{16}\abs{x-y}^2-\frac{1}{16}\norm{\grad_\tau\, u}_\infty^2\abs{x-y}^2
\geq \frac{1}{2}\abs{x-y}^2,
\end{aligned}
\end{equation}
since $t<t_0<1/4$.
By \cref{fuglede:eq3c}, \cref{fuglede:eq4a}, and the fact that $t<t_0<1/8$, on the domain of
integration we deduce
\[
\frac{(a+b)}{2}\abs{x-y}^2+st\abs{ax-by}^2
\leq \abs{x-y}^2+\frac{1}{4}\abs{x-y}^2
\leq 4\abs{x-y+st(ax-by)}^2.
\]
Hence, with \cref{fuglede:eq4b}, it follows
\begin{equation}\label{fuglede:eq4d}
\abs{\grad G_\eps(x-y+st(ax-by))\cdot(ax-by)} \leq C\, k_\eps(\abs{x-y+st(ax-by)}),
\end{equation}
where we have set $k(r)\coloneqq r\abs{g'(r)}$ and $k_\eps(r)\coloneqq
\eps^{-(n+1)}k(\eps^{-1}r)$ for all $r>0$ and $\eps>0$.
For $x,y\in\partial B$ fixed, making the changes of variables $a=u(y)+r(u(x)-u(y))$, and
$b=u(x)+\rho(u(y)-u(x))$ in \cref{fuglede:eq3b} yields, with~\cref{fuglede:defPhi,fuglede:eq4d},
\[
I_2(t)
\leq Ct \int_0^1 \iint_{\partial B\times\partial B}\int_0^1\int_0^1 (u(x)-u(y))^2
k_\eps(\abs{\Phi_{stu}(x,y,r,\rho)})\dr\dd\rho\dH^{n-1}_x\dH^{n-1}_y\ds.
\]
By \cref{Hhighermoments}, $I_{k}^1=I_G^2<\infty$ and $k(r)=O(r^{-(n+1)})$ at infinity, so the family
$(k_\eps)_{\eps>0}$ satisfies the assumptions of~\cref{lem:techlem1}. Thus choosing $t_0<t_1(n)$ and
$\ol{\eps}_5<\ol{\eps}_6$, where $t_1(n)$ and $\ol{\eps}_6(n,G)$ are given by \cref{lem:techlem1},
we have
\[
I_2(t)
\leq Ct\norm{\grad_\tau\, u}^2_{L^2(\bbS^{n-1})},
\]
for a constant $C$ depending on $n$ and $G$.
Combining this with \cref{fuglede:eq2,fuglede:eq3,fuglede:eq5}, we deduce
\begin{equation}\label{fuglede:eq7}
\vphi_\eps(t)
\leq (1+Cq_\eta(\eps))\norm{\grad_\tau\,u}_{L^2(\bbS^{n-1})}^2
+Ct\left(\norm{u}_{L^2(\bbS^{n-1})}^2+ \norm{\grad_\tau\,u}_{L^2(\bbS^{n-1})}^2\right),
\end{equation}
for some $C=C(n,G)$, which concludes this step.\\[4pt]
\textit{Step 3. Estimation of $\psi_\eps(t)$.}
By Fubini's theorem, we have
\begin{equation}\label{fuglede:eq8}
\psi_\eps(t)=2\int_{\partial B}\int_0^{1+tu(x)}\int_{1+tu(x)}^{+\infty}
r^{n-1}\rho^{n-1}\left(\int_{\partial B}G_\eps(rx-\rho y) \dH^{n-1}_y\right)
\dr\dd\rho\dH^{n-1}_x.
\end{equation}
Notice that for any rotation $R\in SO(n)$, $\abs{rx-\rho(R^{-1}y)}=\abs{r(Rx)-y}$, so that,
for any $\sigma\in\partial B$, making the change of variable $y'=R^{-1}y$ with $R$ a rotation
mapping $x$ to $\sigma$, by symmetry of $G$, we find
\[
\int_{\partial B} G_\eps(rx-\rho y)\dH_y^{n-1}
=\int_{\partial B} G(r\sigma-\rho y)\dH_y^{n-1}.
\]
Hence, the above integral does not depend on $x\in\partial B$, and by averaging over $\partial B$,
we obtain
\[
\int_{\partial B} G_\eps(rx-\rho y) \dH^{n-1}_y
=\frac{1}{P(B)}\iint_{\partial B\times\partial B} G_\eps(r\sigma-\rho y)\dH^{n-1}_\sigma\dH^{n-1}_y.
\]
Inserting this into \cref{fuglede:eq8} and using Fubini's theorem once again yields
\[
\begin{aligned}
\psi_\eps(t)
&=\frac{1}{P(B)}\int\limits_{\partial B}
\left(2\iint\limits_{\partial B\times\partial B}\int_0^{1+tu(x)}\int_{1+tu(x)}^{+\infty}
r^{n-1}\rho^{n-1}G_\eps(r\sigma-\rho y) \dr\dd\rho\dH^{n-1}_\sigma\dH^{n-1}_y\right)
\dH_x^{n-1}\\
&=\frac{1}{P(B)}\int_{\partial B}
\left(2\iint_{B_{1+tu(x)}\times B_{1+tu(x)}^\compl}G_\eps(y-z) \dy\dz\right)\dH_x^{n-1}\\
&=\frac{1}{P(B)}\int_{\partial B} \Per_{G_\eps}(B_{1+tu(x)})\dH_x^{n-1}.
\end{aligned}
\]
Then, by \cref{prp:boundPerG}, we deduce
\begin{equation}\label{fuglede:eq9}
\psi_\eps(t)
\leq \frac{1}{P(B)}\int_{\partial B} P(B_{1+tu(x)})
= \int_{\partial B} \big(1+tu(x)\big)^{n-1}\dH_x^{n-1}.
\end{equation}
Since $t<t_0<1/8$ and $\norm{u}_\infty\leq 1$, we have
\[
(1+tu(x))^{n-1}\leq 1+(n-1)tu(x)+(n-1)(n-2)\frac{t^2}{2}u(x)^2+Ct\abs{u(x)}^3,
\]
thus, by \cref{var1min:res1} of \cref{lem:var1min}, from \cref{fuglede:eq9} it follows
\begin{equation}\label{fuglede:eq10}
\begin{aligned}
\psi_\eps(t)
&\leq\int_{\partial B} (1+tu(x))^{n-1}\dH_x^{n-1}\\
&\leq P(B)+\frac{t^2}{2}\left((n-1)(n-2)-(n-1)^2\right)\int_{\partial B}
u^2\dH^{n-1}+t^3\int_{\partial B} \abs{u}^3\dH^{n-1}\\
&= P(B)-(n-1)\frac{t^2}{2}\norm{u}_{L^2(\bbS^{n-1})}^2+t^3\int_{\partial B} \abs{u}^3\dH^{n-1}.
\end{aligned}
\end{equation}
This concludes Step 3.\\[4pt]
\textit{Conclusion.}
Eventually, gathering \cref{fuglede:eq1}, \cref{fuglede:eq7} and \cref{fuglede:eq10}, we
find
\[
\begin{aligned}
\Per_{G_\eps}(E_t)
&\leq \Per_{G_\eps}(B)+\frac{t^2}{2}\left(\big(1+Cq_\eta(\eps)\big)
\norm{\grad_\tau\,u}_{L^2(\bbS^{n-1})}^2-(n-1)\norm{u}_{L^2(\bbS^{n-1})}^2\right)\\
&\hphantom{\leq \Per_{G_\eps}(B)+\frac{t^2}{2}\left(\norm{\grad_\tau\,u}\right.}
+Ct^3\left(\norm{\grad_\tau\,u}_{L^2(\bbS^{n-1})}^2+\norm{u}_{L^2(\bbS^{n-1})}^2\right),
\end{aligned}
\]
for some $C=C(n,G)$, provided $t_0=t_0(n)$ is chosen small enough, which concludes the proof.
\end{proof}

\subsection{Minimality of the unit ball}

In order to take advantage of \cref{lem:fuglede}, one should have that for a centered $t$-nearly
spherical set $E_t$ such that $\partial E_t=\left\{(1+tu(x))x~:~x\in\bbS^{n-1}\right\}$, the
quantity
\[
\big(1+Cq_\eta(\eps)\big)\norm{\grad_\tau\,u}_{L^2(\bbS^{n-1})}^2-(n-1)\norm{u}_{L^2(\bbS^{n-1})}^2
\]
from \cref{eq:resfuglede} controls $\norm{u}_{H^1(\bbS^{n-1})}^2$ for small $t$ and $\eps$. This is
the purpose of \cref{lem:var1min} in appendix.

With \cref{lem:fuglede} and Fuglede's result for the local perimeter, we deduce a lower
bound for $\calF_{\gamma,G_\eps}(E_t)-\calF_{\gamma,G_\eps}(B_1)$.

\begin{prp}\label{prp:fuglede2}
Assume that $G$ satisfies \crefrange{Hrad}{Hhighermoments}.
Then there exist positive constants $t_*$ and $\eps_3$ depending only on $n$, $G$ and $\gamma$ such
that the following holds. If $E_t$ is a centered $t$-nearly spherical set such that $\partial
E_t=\left\{(1+tu(x))x~:~x\in\bbS^{n-1}\right\}$ with $0<t<t_*$, then for every $0<\eps<\eps_3$, we
have
\[
\calF_{\gamma,G_\eps}(E_t)-\calF_{\gamma,G_\eps}(B_1)
\geq \frac{t^2}{16}(1-\gamma)
\left(\norm{\grad_\tau\,u}_{L^2(\bbS^{n-1})}^2+\norm{u}_{L^2(\bbS^{n-1})}^2\right).
\]
\end{prp}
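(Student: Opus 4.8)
The plan is to write $\calF_{\gamma,G_\eps}=P-\gamma\Per_{G_\eps}$ and to compare the second–order expansions of the two terms at $B_1$. Writing $\partial E_t=\{(1+tu(x))x:x\in\bbS^{n-1}\}$ and $\norm{\cdot}_{L^2}$ for $\norm{\cdot}_{L^2(\bbS^{n-1})}$, the upper bound on the nonlocal part is already supplied by \cref{lem:fuglede}:
\[
\Per_{G_\eps}(E_t)-\Per_{G_\eps}(B_1)\leq\frac{t^2}{2}\big((1+Cq_\eta(\eps))\norm{\grad_\tau\, u}_{L^2}^2-(n-1)\norm{u}_{L^2}^2\big)+Ct^3\big(\norm{\grad_\tau\, u}_{L^2}^2+\norm{u}_{L^2}^2\big),
\]
so the task reduces to producing a matching \emph{lower} bound for the local perimeter.

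I would parametrize $\partial E_t$ as a radial graph, obtaining $P(E_t)=\int_{\bbS^{n-1}}(1+tu)^{n-1}\sqrt{1+t^2(1+tu)^{-2}\abs{\grad_\tau\, u}^2}\,\dH^{n-1}$. Using $\sqrt{1+z}\geq1+\frac z2-\frac{z^2}8$ together with $\norm{u}_{L^\infty}+\norm{\grad_\tau\, u}_{L^\infty}\leq1$ and $t<t_0(n)$, the square–root factor contributes $\frac{t^2}{2}\norm{\grad_\tau\, u}_{L^2}^2$ up to an error $Ct^3(\norm{\grad_\tau\, u}_{L^2}^2+\norm{u}_{L^2}^2)$; and the mass constraint $\abs{E_t}=\abs{B_1}$, i.e. $\int_{\bbS^{n-1}}(1+tu)^n\,\dH^{n-1}=\abs{\bbS^{n-1}}$, forces $\int_{\bbS^{n-1}}u\,\dH^{n-1}=-\frac{(n-1)t}{2}\norm{u}_{L^2}^2+O(t^2)\norm{u}_{L^2}^2$, whence $\int_{\bbS^{n-1}}(1+tu)^{n-1}\,\dH^{n-1}\geq P(B_1)-\frac{(n-1)t^2}{2}\norm{u}_{L^2}^2-Ct^3\norm{u}_{L^2}^2$. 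Altogether
\[
P(E_t)-P(B_1)\geq\frac{t^2}{2}\big(\norm{\grad_\tau\, u}_{L^2}^2-(n-1)\norm{u}_{L^2}^2\big)-Ct^3\big(\norm{\grad_\tau\, u}_{L^2}^2+\norm{u}_{L^2}^2\big),
\]
so the quadratic terms of $P$ and of $\Per_{G_\eps}$ coincide, and subtracting, the second–order part of $\calF_{\gamma,G_\eps}(E_t)-\calF_{\gamma,G_\eps}(B_1)$ equals $(1-\gamma)$ times this common form, plus the defect carried by $q_\eta(\eps)$.

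Combining the two bounds, for $0<t<t_*$ and $0<\eps<\eps_3$ with $t_*\leq t_0(n)$ and $\eps_3\leq\ol{\eps}_5(n,G)$,
\[
\calF_{\gamma,G_\eps}(E_t)-\calF_{\gamma,G_\eps}(B_1)\geq\frac{t^2}{2}\Big((1-\gamma)\big(\norm{\grad_\tau\, u}_{L^2}^2-(n-1)\norm{u}_{L^2}^2\big)-\gamma Cq_\eta(\eps)\norm{\grad_\tau\, u}_{L^2}^2\Big)-Ct^3\big(\norm{\grad_\tau\, u}_{L^2}^2+\norm{u}_{L^2}^2\big).
\]
Now I would invoke \cref{lem:var1min}: since $E_t$ is centered and has the volume of $B_1$, its $\ell=0$ and $\ell=1$ spherical–harmonic components satisfy $\norm{u_0}_{L^2}^2+\norm{u_1}_{L^2}^2\leq Ct^2\norm{u}_{L^2}^2$, while on the orthogonal complement the Laplace--Beltrami eigenvalues obey $\lambda_\ell-(n-1)\geq n+1$ for $\ell\geq2$; hence $\norm{\grad_\tau\, u}_{L^2}^2-(n-1)\norm{u}_{L^2}^2\geq c_n\norm{u}_{H^1(\bbS^{n-1})}^2$ for $t$ small depending on $n$. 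Since $q_\eta(\eps)\to0$, the term $\gamma Cq_\eta(\eps)\norm{\grad_\tau\, u}_{L^2}^2$ is $\leq\frac12(1-\gamma)c_n\norm{u}_{H^1(\bbS^{n-1})}^2$ once $\eps$ is small depending on $n,G,\gamma$, and the cubic error is absorbed into half of the remaining term after a further shrinking of $t_*$; this yields $\calF_{\gamma,G_\eps}(E_t)-\calF_{\gamma,G_\eps}(B_1)\geq\frac{t^2}{16}(1-\gamma)\big(\norm{\grad_\tau\, u}_{L^2}^2+\norm{u}_{L^2}^2\big)$, which is the claim (and it is strictly positive unless $u\equiv0$, i.e. $E_t=B_1$).

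The genuine difficulty lies in \cref{lem:fuglede} — the third–order Taylor expansion of $\Per_{G_\eps}$, handled through the perturbed kernel $k_\eps$ and the technical lemma behind it — and in \cref{lem:var1min} (the spherical spectral–gap estimate), both of which I take as given here. Within the present proof the two points needing care are: (i) the \emph{exact} cancellation of the second–order terms of $P$ and of $\Per_{G_\eps}$, which is why it is the strict inequality $\gamma<1$, rather than smallness of any parameter, that makes the energy gap positive; and (ii) the order of the limits in the three small quantities — $t$ must be small both for the coercivity of \cref{lem:var1min} and for the cubic remainder, whereas $\eps$ must be chosen small \emph{after} $\gamma$ has been fixed, so that the Bourgain--Brezis--Mironescu defect $q_\eta(\eps)$ coming from \cref{lem:cvsphunif} is dominated by $(1-\gamma)$. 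Finally, the normalization $\abs{E_t}=\abs{B_1}$ (used through \cref{lem:var1min}) is precisely what pins down $\int_{\bbS^{n-1}}u$ and hence the $\ell=0$ mode, and cannot be dispensed with: at fixed $\eps>0$ small, dilations of $B_1$ strictly decrease $\calF_{\gamma,G_\eps}$, so without a volume constraint $B_1$ would not even be a local minimizer.
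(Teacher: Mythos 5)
Your proof is correct and follows essentially the same route as the paper: upper bound on $\Per_{G_\eps}(E_t)-\Per_{G_\eps}(B_1)$ from \cref{lem:fuglede}, a matching second-order lower bound for $P(E_t)-P(B_1)$, exact cancellation of the quadratic forms leaving a factor $(1-\gamma)$, coercivity from \cref{lem:var1min}, and absorption of the $q_\eta(\eps)$ and $t^3$ errors. The only difference is that you re-derive the perimeter lower bound from the radial-graph area formula, whereas the paper simply cites the sharpened Fuglede inequality from Fusco's proof; your derivation is sound and the remarks on the order of smallness of $t$ and $\eps$ match the paper's choice of constants.
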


\begin{proof}
Assume $0<\eps<\eps_3$, and $0<t<t_*$, where $t_*=t_*(n)$ and~$\eps_3=\eps_3(n,G,\gamma)$ will be
fixed later.
If $E_t$ is a centered $t$-nearly spherical set and $t_*<t_1$ as well, where $t_1=t_1(n)$ is given
by \cref{lem:var1min}, we have in particular
\begin{equation}\label{fugcor:eq0}
\norm{\grad_\tau\, u}_{L^2(\bbS^{n-1})}^2+\norm{u}_{L^2(\bbS^{n-1})}^2
\leq 4\left(\norm{\grad_\tau\,u}_{L^2(\bbS^{n-1})}^2-(n-1)\norm{u}_{L^2(\bbS^{n-1})}^2\right).
\end{equation}
\edit{Then, choosing $t_*<\ol{t}$ where $\ol{t}=\ol{t}(n)$ is given by \cref{lem:fugledeper}, we
have}
\[
P(E_t)\geq
P(B_1)+\frac{t^2}{2}\left(\norm{\grad_\tau\,u}_{L^2(\bbS^{n-1})}^2
-(n-1)\norm{u}_{L^2(\bbS^{n-1})}^2\right) -Ct^3\left(\norm{\grad_\tau\,
u}_{L^2(\bbS^{n-1})}^2+\norm{u}_{L^2(\bbS^{n-1})}^2\right).
\]
for some positive constant $C=C(n)$.
Then, assuming that $t_*<t_0(n)$ as well, and $\eps<\ol{\eps}_5(n,G)$, where $t_0$ and
$\ol{\eps}_5$ are given by \cref{lem:fuglede}, we find
\begin{equation}\label{fugcor:eq1}
\begin{aligned}
&\calF_{\gamma,G_\eps}(E_t)-\calF_{\gamma,G_\eps}(B_1)\\
&\hphantom{\calF}
\geq \frac{t^2}{2}
\Big[ (1-\gamma)\Big(\norm{\grad_\tau\,u}_{L^2(\bbS^{n-1})}^2-(n-1)\norm{u}_{L^2(\bbS^{n-1})}^2\Big)
-C'q_\eta(\eps)\norm{\grad_\tau\,u}_{L^2(\bbS^{n-1})}^2\Big]\\
&\hphantom{\calF}\hphantom{\geq \frac{t^2}{2} \Big[ (1-\gamma)}
-C't^3\Big(\norm{\grad_\tau\,u}_{L^2(\bbS^{n-1})}^2+\norm{u}_{L^2(\bbS^{n-1})}^2\Big),
\end{aligned}
\end{equation}
for some positive constant $C'=C'(n,G)$, with $q_\eta(\eps)\to 0$ as $\eps\to 0$.
From \cref{fugcor:eq0,fugcor:eq1}, it follows
\[
\calF_{\gamma,G_\eps}(E_t)-\calF_{\gamma,G_\eps}(B_1)
\geq \frac{1}{4}\left(\frac{t^2}{2}\big((1-\gamma)-C'q_\eta(\eps)\big)-(C+C')t^3\right)
\left(\norm{\grad_\tau\,u}_{L^2(\bbS^{n-1})}^2+\norm{u}_{L^2(\bbS^{n-1})}^2\right).
\]
Eventually, choosing $\eps_3$, $t_*$ small enough depending only on $n$, $G$ and $\gamma$, for any
$0<\eps<\eps_3$ and~$0< t< t_*$, we have
\[
\calF_{\gamma,G_\eps}(E_t)-\calF_{\gamma,G_\eps}(B_1) \geq \frac{t^2}{16}(1-\gamma)
\left(\norm{\grad_\tau\,u}_{L^2(\bbS^{n-1})}^2+\norm{u}_{L^2(\bbS^{n-1})}^2\right),
\]
which proves the result.
\end{proof}

An immediate consequence of \cref{prp:fuglede2} is that the unit ball is the only minimizer, up to
translations, of \cref{minrp}, among $t$-nearly spherical sets whenever $t<t_*(n,G,\gamma)$ and
$\eps<\eps_3(n,G,\gamma)$, that is, \cref{mainthm:minballsph}. In dimension $n=2$, by
\cref{mainthm:minnearlycirc,mainthm:minballsph}, choosing $\eps_A<\min(\eps_2,\eps_3)$
such that $t(\eps)<t_*$ for every $0<\eps<\eps_A$, we readily obtain~\cref{mainthm:mindisk}.

\begin{rmk}\label{rmk:depepsstar}
\edit{Tracking the dependence in $\gamma$, we see that $\eps_3$ is chosen such that
$q_\eta(\eps)\leq C(n,G)(1-\gamma)$ for all $0<\eps<\eps_3$, and $t_*= C(n,G)(1-\gamma)$. As for the
quantity $P(B_1)-\Per_{\eps}(B_1)$, we have
\[
q_\eta(\eps) \sim C(n,G)\eps^2
\]
as $\eps$ vanishes, thus $\eps_3=C(n,G)(1-\gamma)^{1/2}$. Then since $t(\eps)$ of
\cref{mainthm:minnearlycirc} is bounded from above by $C(n)\delta(\eps)^{1/2}$ (see
\cref{prp:minsph}) and $\eps_A$ of \cref{mainthm:mindisk} is chosen such that $t(\eps)<t_*$ for all
$0<\eps<\eps_A$, we get the constraint
\[
\eps_A\leq C(n,G) \frac{(1-\gamma)^{\frac{5}{2}}}{\gamma^{\frac{1}{2}}},
\]
where we used \cref{rmk:distball} to estimate $\delta(\eps)$.
Since $\eps_A$ is also chosen smaller than $\min(\eps_2,\eps_3)$, if
$G(x)=O(\abs{x}^{-(n+1+\beta)})$ at infinity, by \cref{rmk:depeps12}, we find for $\gamma\geq 1/2$,
that we can choose}
\[
\edit{\eps_A=C(n,G)(1-\gamma)^{\max\left(\frac{5}{2},\frac{1}{2} +\frac{1}{\beta}\right)}.}
\]
\end{rmk}

\appendix

\section{Additional computations for \texorpdfstring{{\cref{sec:fuglede}}}{Section 4}}

In the following lemma, we establish some general inequalities on functions $u:\bbS^{n-1}\to \R$
describing centered nearly-spherical sets. For this, we need to recall a few basic facts and
notation on spherical harmonics. For $k\geq 0$, we denote by $\calS_k$ the subspace of spherical
harmonics of degree $k$ (i.e., restrictions to $\bbS^{n-1}$ of polynomials of degree $k$ in $\R^n$),
which is a finite-dimensional vector space of degree $d(k)$. 
Let $(Y_k^i)_{i\in\{1,\dotsc,d(k)\}}$ be an orthonormal basis of $\calS_k$ for the standard scalar
product of $L^2(\bbS^{n-1})$. When there can be no confusion, we write $Y_k$ for a generic vector of
the basis of $\calS_k$. It is well-known that the family $(Y_k^i)_{k\in\N}^{i\in \{1,\dotsc,d(k)\}}$
is a Hilbert basis of $L^2(\bbS^{n-1})$ which diagonalizes the Laplace-Beltrami operator on the
sphere, and the eigenvalue associated with $Y_k^i$ is $l_k\coloneqq l_k=k(k+n-2)$, for
all~$i\in\{1,\dotsc,d(k)\}$. We recall that $d(0)=1$, $d(1)=n$, and that the $Y_1^i$ may be chosen
colinear to $x\mapsto x_i$, for example~$Y_1^i=\abs{B_1}^{-1/2} x_i$ for all~$i\in\{1,\dotsc,n\}$.

\begin{lem}\label{lem:var1min}
There exist positive constants $t_1=t_1(n)$ and $C=C(n)$ such that the following holds.
If $E_t$ is a centered $t$-nearly spherical set such that $\partial
E_t=\left\{(1+tu(x))x~:~x\in\bbS^{n-1}\right\}$ with $0<t<t_1$, then we have
\begin{equation}\label{var1min:res1}
\abs*{t\int_{\bbS^{n-1}} u\dH^{n-1}+(n-1)\frac{t^2}{2}\int_{\bbS^{n-1}} u^2 \dH^{n-1}} \leq
Ct^3\int_{\bbS^{n-1}} \abs{u}^3 \dH^{n-1},
\end{equation}
and
\begin{equation}\label{var1min:res2}
\begin{aligned}
\frac{1}{2}\norm{\grad_\tau\, u}_{L^2(\bbS^{n-1})}^2-(n-1)\norm{u}_{L^2(\bbS^{n-1})}^2
\geq \frac{1}{2}\norm{u}_{L^2(\bbS^{n-1})}^2.
\end{aligned}
\end{equation}
\end{lem}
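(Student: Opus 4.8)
The plan is to extract both inequalities from the two conditions satisfied by a centered $t$-nearly spherical set $E_t$ — the volume normalization $\abs{E_t}=\abs{B_1}$ and the vanishing of the barycenter $\int_{E_t}x\dx=0$ — after rewriting them in polar coordinates.

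First I would pass to polar coordinates. Writing $E_t=\{\,r\omega:\omega\in\bbS^{n-1},\ 0\leq r\leq 1+tu(\omega)\,\}$, one has
\[
\abs{E_t}=\frac1n\int_{\bbS^{n-1}}(1+tu)^n\dH^{n-1},
\qquad
\int_{E_t}x\dx=\frac1{n+1}\int_{\bbS^{n-1}}\omega\,(1+tu(\omega))^{n+1}\dH^{n-1}_\omega.
\]
Since $\norm{u}_{L^\infty(\bbS^{n-1})}\leq 1$ and $t<1$, the quantity $tu$ takes values in $[-1,1]$, so Taylor's formula with Lagrange remainder yields $\abs{(1+s)^n-1-ns-\binom{n}{2}s^2}\leq C_n\abs{s}^3$ and $\abs{(1+s)^{n+1}-1-(n+1)s}\leq C_n s^2$ for $s\in[-1,1]$. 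Plugging the first bound into $\abs{E_t}=\abs{B_1}=\tfrac1n\abs{\bbS^{n-1}}$ and dividing by $n$, using $\binom{n}{2}/n=\tfrac{n-1}{2}$, gives exactly \cref{var1min:res1}. Estimating the quadratic and cubic terms appearing there by $\norm{u}_{L^2(\bbS^{n-1})}^2$ (via $\norm{u}_{L^\infty(\bbS^{n-1})}\leq1$ and $t<1$) moreover produces the crude first-order bound $\abs*{\int_{\bbS^{n-1}}u\dH^{n-1}}\leq C_n t\,\norm{u}_{L^2(\bbS^{n-1})}^2$. Similarly, inserting the second bound into the barycenter identity and using $\int_{\bbS^{n-1}}\omega\dH^{n-1}_\omega=0$ gives $\abs*{\int_{\bbS^{n-1}}\omega\,u(\omega)\dH^{n-1}_\omega}\leq C_n t\,\norm{u}_{L^2(\bbS^{n-1})}^2$.

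For \cref{var1min:res2} I would decompose $u$ in spherical harmonics, $u=\sum_{k\geq0}\sum_{i=1}^{d(k)}a_k^i Y_k^i$, so that $\norm{u}_{L^2(\bbS^{n-1})}^2=\sum_{k,i}(a_k^i)^2=:S$ and $\norm{\grad_\tau\, u}_{L^2(\bbS^{n-1})}^2=\sum_{k\geq0}l_k\sum_i(a_k^i)^2$, with $l_0=0$, $l_1=n-1$, and $l_k=k(k+n-2)\geq 2n$ for $k\geq2$. The two crude bounds above say precisely that the degree-$0$ and degree-$1$ components of $u$ are small: since $Y_0^1=\abs{\bbS^{n-1}}^{-1/2}$ and $Y_1^i=\abs{B_1}^{-1/2}x_i$, they give $(a_0^1)^2\leq C_n t^2 S^2$ and $\sum_i(a_1^i)^2\leq C_n t^2 S^2$. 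Because $\norm{u}_{L^\infty(\bbS^{n-1})}\leq1$ forces $S\leq\abs{\bbS^{n-1}}$, these quantities are genuinely $o(1)\cdot S$ as $t\to0$. Hence
\[
\norm{\grad_\tau\, u}_{L^2(\bbS^{n-1})}^2
\ \geq\ 2n\sum_{k\geq2}\sum_i(a_k^i)^2
\ =\ 2n\Bigl(S-(a_0^1)^2-{\textstyle\sum_i}(a_1^i)^2\Bigr)
\ \geq\ 2n\,S\,\bigl(1-C_n t^2\abs{\bbS^{n-1}}\bigr),
\]
and choosing $t_1=t_1(n)$ small enough that $C_n t^2\abs{\bbS^{n-1}}\leq\tfrac1{2n}$ whenever $t<t_1$ yields $\norm{\grad_\tau\, u}_{L^2(\bbS^{n-1})}^2\geq(2n-1)\norm{u}_{L^2(\bbS^{n-1})}^2$, which is exactly a rearrangement of \cref{var1min:res2}.

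All the steps are elementary; the only delicate point is the passage from the geometric constraints to the smallness of the degree-$0$ and degree-$1$ Fourier coefficients of $u$. In particular one should keep track of the fact that $\norm{u}_{L^\infty(\bbS^{n-1})}\leq1$ makes $\norm{u}_{L^2(\bbS^{n-1})}$ bounded by a purely dimensional constant, which is precisely what makes the error terms of size $C_n t^2\norm{u}_{L^2(\bbS^{n-1})}^2$ negligible compared with $\norm{u}_{L^2(\bbS^{n-1})}^2$ for $t$ small. I do not expect any serious obstacle beyond this bookkeeping.
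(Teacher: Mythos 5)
Your proposal is correct and follows essentially the same route as the paper: the volume constraint expanded in polar coordinates gives \cref{var1min:res1}, and then the volume and barycenter constraints are used to show that the degree-$0$ and degree-$1$ spherical-harmonic coefficients of $u$ are $O(t)\norm{u}_{L^2(\bbS^{n-1})}^2$, which combined with the spectral gap $l_k\geq 2n$ for $k\geq 2$ yields \cref{var1min:res2}. The only differences are cosmetic bookkeeping (you discard the degree-$1$ gradient contribution and absorb the small coefficients via $\norm{u}_{L^2(\bbS^{n-1})}^2\leq\abs{\bbS^{n-1}}$, whereas the paper tracks the terms individually), so no further comment is needed.
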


\begin{proof}
Since $\abs{E_t}=\abs{B}$, we have
\[
n\abs{E_t}=\int_{\partial B} (1+tu(x))^n\dH^{n-1}=\int_{\partial B} 1\dH^{n-1}=n\abs{B}.
\]
Thus, writing
\[
(1+tu(x))^n-\left(1+ntu(x)+\frac{(n-1)}{2}t^2u(x)^2\right)=\sum_{k=3}^n \binom{n}{k} t^k u(x)^k,
\]
we deduce \cref{var1min:res1}, choosing e.g. $t_1=1$ and $C$ depending only on $n$.
There remains to show \cref{var1min:res2}.
We decompose $u$ in spherical harmonics
\begin{equation}\label{fuglede:eq20}
u=\sum_{k=0}^{+\infty} \sum_{i=1}^{d(k)} a_k^i(u)\,Y_k^i,
\end{equation}
so that
\[
\norm{u}^2_{L^2(\bbS^{n-1})}=\sum_{k=0}^{+\infty} \sum_{i=1}^{d(k)} a_k^i(u)^2, \qquad
\norm{\grad_\tau\, u}^2_{L^2(\bbS^{n-1})}=\sum_{k=1}^{+\infty}\sum_{i=1}^{d(k)} l_k\, a_k^i(u)^2.
\]
Recall that $d(0)=1$, $Y_0^1$ is constant, $d(1)=n$, and that $Y_1^i$ is colinear to
$x\mapsto x_i$.
Since $l_1=n-1$ and~$l_k\geq 2n$ for $k\geq 2$, it follows
\begin{equation}\label{fuglede:eq22}
\begin{aligned}
&\frac{1}{2}\norm{\grad_\tau\,u}_{L^2(\bbS^{n-1})}^2-(n-1)\norm{u}_{L^2(\bbS^{n-1})}^2\\
&\hphantom{\norm{u}_{L^2(\bbS^{n-1})}}
\geq \norm{u}_{L^2(\bbS^{n-1})}^2-\frac{(n+1)}{2}\left(\sum_{i=1}^{n} a_1^i(u)^2\right)
-(n-1)a_0^1(u)^2.
\end{aligned}
\end{equation}
On one hand,
\[
a_0^1(u)=\frac{1}{\abs{\bbS^{n-1}}} \int_{\bbS^{n-1}} u\dH^{n-1},
\]
so that by \cref{var1min:res1}, we have
\begin{equation}\label{fuglede:eq23}
\abs{a_0^1(u)}\leq \frac{2t}{\abs{\bbS^{n-1}}} \norm{u}_{L^2(\bbS^{n-1})}^2
\leq\frac{1}{2\sqrt{n-1}}\norm{u}_{L^2(\bbS^{n-1})},
\end{equation}
up to choosing $t_1=t_1(n)$ small enough, and using $t<t_1$ and
$\norm{u}_{L^\infty(\bbS^{n-1})}\leq 1$.
On the other hand, the barycenter condition
\[
\int_{E_t} x\dx=0
\]
gives
\[
\int_{\bbS^{n-1}} x_i(1+tu(x))^{n}\dH_x^{n-1}=0,\qquad\forall i\in\{1,\dotsc,n\}.
\]
Using the binomial formula again and $Y_1^i(x)=\abs{B_1}^{-1/2}x_i$, we obtain, for
$i\in\{1,\dotsc,n\}$,
\[
a_1^i(u)=\frac{1}{\sqrt{\abs{B_1}}}\int_{\bbS^{n-1}} x_i u(x)\dH_x^{n-1}
=-\frac{1}{n\sqrt{\abs{B_1}}}\sum_{k=2}^n \binom{n}{k} t^k\int_{\bbS^{n-1}} x_i u(x)^k\dH_x^{n-1}.
\]
Next, using $\norm{u}_{L^\infty(\bbS^{n-1})}\leq 1$, Cauchy-Schwarz inequality, and
$n\abs{B_1}=\abs{\bbS^{n-1}}$, we get
\[
\abs{a_i(u)}\leq \frac{2^n}{n} t^2\norm{u}_{L^2(\bbS^{n-1})}.
\]
Whence, choosing $t_1$ even smaller, but still depending only on $n$, we may assume
\begin{equation}\label{fuglede:eq24}
\sum_{i=1}^{n}~ a_1^i(u)^2 \leq \frac{1}{2(n+1)}\norm{u}_{L^2(\bbS^{n-1})}^2.
\end{equation}
Gathering \cref{fuglede:eq22,fuglede:eq23,fuglede:eq24}, we find
\[
\frac{1}{2}\norm{\grad_\tau\, u}_{L^2(\bbS^{n-1})}^2-(n-1)\norm{u}_{L^2(\bbS^{n-1})}^2
\geq \frac{1}{2}\norm{u}_{L^2(\bbS^{n-1})}^2,
\]
which proves \cref{var1min:res2} and concludes the proof.
\end{proof}

We establish a technical lemma to control terms of the form
\[
\iint_{\bbS^{n-1}\times\bbS^{n-1}} \int_0^1\int_0^1 (u(x)-u(y))^2
k_\eps(\abs{\Phi_{tu}(x,y,r,\rho)})\dr\dd\rho\dH^{n-1}_x\dH^{n-1}_y,
\]
by the $H^1(\bbS^{n-1})$ norm of $u$, where $\Phi_{tu}(x,y,r,\rho)$ is a small perturbation of
$(x-y)$, and $k_\eps$ are suitable rescalings of a nonnegative kernel.

\begin{lem}\label{lem:techlem1}
Let $k:(0,+\infty)\to[0,+\infty)$ be a measurable function such that $k(r)\leq C_0 r^{-(n+1)}$
on $(R_0,+\infty)$ for some positive constants $C_0$, $R_0$, and
\[
I_k^1 \coloneqq \int_{\Ren} \abs{x}k(\abs{x})\dx <+\infty.
\]
Let us define the rescaling $k_\eps(r)\coloneqq \eps^{-(n+1)}k(\eps^{-1}r)$, $r,~\eps>0$. 
For $v\in\Lip(\bbS^{n-1})$, we define a map~$\Phi_v:\left(\bbS^{n-1}\right)^2\times (0,1)^2$ by
\begin{equation}\label{eq:defPhi}
\Phi_v(x,y,r,\rho)=(x-y)+\Big[\big(rv(x)+(1-r)v(y))\big)x-\big(\rho v(y)+(1-\rho)v(x)\big)y\Big].
\end{equation}
Then there exist positive constants $\ol{t}_1=\ol{t}_1(n)$,
$\ol{C}_0=\ol{C}_0(n,C_0,I_k^1)$ and $\ol{\eps}_6=\ol{\eps}_6(n,R_0)$ such that the
following holds.
For any $u\in\Lip(\bbS^{n-1})$ such that $\norm{u}_{L^\infty(\bbS^{n-1})}\leq 1$,
$\norm{\grad_\tau\, u}_{L^\infty(\bbS^{n-1})}\leq 1$, and any $t\in(0,\ol{t}_1)$, we have
\begin{multline}\label{techlem1:skernel}
\iint_{\bbS^{n-1}\times\bbS^{n-1}} \int_0^1\int_0^1 (u(x)-u(y))^2
k_\eps(\abs{\Phi_{tu}(x,y,r,\rho)})\dr\dd\rho\dH^{n-1}_x\dH^{n-1}_y\\
\leq \ol{C}_0\left(\int_{\bbS^{n-1}} \abs{\grad_\tau\, u}^2\dH^{n-1}+\int_{\bbS^{n-1}}
\abs{u}^2\dH^{n-1}\right).
\end{multline}
\end{lem}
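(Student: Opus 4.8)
The plan is to split the domain of integration into the region $\{d(x,y)\ge1\}$, where $d$ is the geodesic distance on $\bbS^{n-1}$ and on which the rescaled kernel is uniformly bounded once $\eps$ is small, and the region $\{d(x,y)<1\}$, on which $\Phi_{tu}(x,y,r,\rho)$ is a small perturbation of $x-y$; the $\norm{u}_{L^2(\bbS^{n-1})}^2$ term of \eqref{techlem1:skernel} will come from the former and the $\norm{\grad_\tau u}_{L^2(\bbS^{n-1})}^2$ term from the latter.

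First I would record the two-sided bound $\tfrac12\abs{x-y}\le\abs{\Phi_{tu}(x,y,r,\rho)}\le2\abs{x-y}$, valid for all $r,\rho\in(0,1)$ once $0<t<\ol{t}_1(n)$ is small: writing $\Phi_{tu}=(x-y)+t(ax-by)$ with $a\coloneqq ru(x)+(1-r)u(y)$, $b\coloneqq\rho u(y)+(1-\rho)u(x)$ (so $\abs a,\abs b\le\norm u_{L^\infty}\le1$), the identity $\abs{ax-by}^2=(a-b)^2+ab\abs{x-y}^2$ together with $a-b=(r+\rho-1)(u(x)-u(y))$ and $\abs{u(x)-u(y)}\le\tfrac{\pi}{2}\norm{\grad_\tau u}_{L^\infty}\abs{x-y}$ gives $\abs{t(ax-by)}\le2t\abs{x-y}$. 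Consequently, on $\{d(x,y)\ge1\}$ we have $\abs{\Phi_{tu}}\ge\tfrac12\abs{x-y}\ge\sin\tfrac12$, so for $\eps<\ol{\eps}_6(n,R_0)$ small enough the decay hypothesis yields $k_\eps(\abs{\Phi_{tu}})\le C_0\abs{\Phi_{tu}}^{-(n+1)}\le C(n,C_0)$; since $\iint_{\bbS^{n-1}\times\bbS^{n-1}}(u(x)-u(y))^2\,\dH^{n-1}_x\dH^{n-1}_y\le4\abs{\bbS^{n-1}}\norm{u}_{L^2(\bbS^{n-1})}^2$, the region $\{d(x,y)\ge1\}$ contributes at most $C(n,C_0)\norm{u}_{L^2(\bbS^{n-1})}^2$.

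On $\{d(x,y)<1\}$ I would combine the Lipschitz estimate $(u(x)-u(y))^2\le d(x,y)^2\int_0^1\abs{\grad_\tau u(\gamma_{x,y}(\theta))}^2\,d\theta$, where $\gamma_{x,y}\colon[0,1]\to\bbS^{n-1}$ is the constant-speed minimizing geodesic from $y$ to $x$, with the change of variables $(x,y)\mapsto(w,\ell,\nu)$ carried out for each fixed $\theta$, where $w\coloneqq\gamma_{x,y}(\theta)$, $\ell\coloneqq d(x,y)$, and $\nu\in\bbS^{n-2}\subset T_w\bbS^{n-1}$ is the geodesic direction at $w$; this change of variables is a diffeomorphism onto $\{(w,\ell,\nu):w\in\bbS^{n-1},\,0<\ell<1,\,\nu\in\bbS^{n-2}\}$ with Jacobian comparable to $\ell^{n-2}$ uniformly in $\theta\in[0,1]$. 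Since $\abs{x-y}\le\ell$, Fubini's theorem then bounds the contribution of $\{d(x,y)<1\}$ by
\[
C(n)\int_0^1\int_{\bbS^{n-1}}\abs{\grad_\tau u(w)}^2\biggl(\int_0^1\int_{\bbS^{n-2}}\int_0^1\int_0^1\ell^{n}\,k_\eps\bigl(\abs{\Phi_{tu}(x(\ell,\nu),y(\ell,\nu),r,\rho)}\bigr)\,dr\,d\rho\,\dH^{n-2}_\nu\,d\ell\biggr)\dH^{n-1}_w\,d\theta,
\]
so the proof reduces to bounding the inner bracket by a constant $C(n,I_k^1)$, uniformly in $w$, $\theta$, $\eps$ and $u$.

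This uniform bound is the crux, and the main obstacle is exactly that $k$ is not assumed monotone, so $k_\eps(\abs{\Phi_{tu}})$ cannot be controlled pointwise by $k_\eps$ of a fixed multiple of $\abs{x-y}$. Instead, for fixed $w,\nu,r,\rho,\theta$ I would change variables once more in the radial integral, from $\ell$ to $s\coloneqq\abs{\Phi_{tu}(x(\ell,\nu),y(\ell,\nu),r,\rho)}$. A Taylor expansion at $\ell=0$ — where $x(0,\nu)=y(0,\nu)=w$ and hence $\Phi_{tu}=0$ — gives $\Phi_{tu}=\ell\nu+O\bigl((\ell+t)\ell\bigr)$ and $\partial_\ell\Phi_{tu}=\nu+O(\ell+t)$, whence $\partial_\ell(\abs{\Phi_{tu}}^2)=2\ell\bigl(1+O(\ell+t)\bigr)>0$ for $\ell\in(0,1)$ and $0<t<\ol{t}_1(n)$ small; thus $\ell\mapsto s$ is an increasing diffeomorphism with $\tfrac12\le ds/d\ell\le2$ and $\tfrac12\le s/\ell\le2$. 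It follows that $\int_0^1\ell^{n}k_\eps(\abs{\Phi_{tu}})\,d\ell\le2^{n+1}\int_0^{+\infty}s^{n}k_\eps(s)\,ds=2^{n+1}I_k^1/\abs{\bbS^{n-1}}$, where the last equality is the scaling identity $\int_0^{+\infty}s^nk_\eps(s)\,ds=\int_0^{+\infty}s^nk(s)\,ds$. Integrating out $\nu$, $r$, $\rho$ bounds the inner bracket by $C(n)I_k^1$, and combining with the far estimate proves \eqref{techlem1:skernel} with $\ol{C}_0=\ol{C}_0(n,C_0,I_k^1)$. The routine but somewhat lengthy points left to verify are the Jacobian estimates for the geodesic-polar change of variables and the expansion of $\Phi_{tu}$ near $\ell=0$; note that, in contrast with the sharp Bourgain--Brezis--Mironescu-type estimate of \cref{lem:cvsphunif}, this is a crude direct bound that is valid in every dimension $n\ge2$ without any analogue of condition \eqref{eq:Tcond}.
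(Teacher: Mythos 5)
Your argument is correct, and for the crucial near-diagonal estimate it takes a genuinely different route from the paper. The far-region treatment is essentially the paper's: both exploit $\abs{\Phi_{tu}}\geq\frac12\abs{x-y}\geq c>0$ together with the decay $k(r)\leq C_0r^{-(n+1)}$ to get a uniform bound on $k_\eps$ for $\eps<\ol{\eps}_6$, producing the $\norm{u}_{L^2}^2$ term. For nearby pairs, the paper passes to local charts (stereographic projections onto $D_2\subset\R^{n-1}$), writes $(u(x)-u(y))^2$ as an integral of $\abs{\grad\wt u}^2$ along segments, and then performs the $2(n-1)$-dimensional change of variables $(x,y)\mapsto(\Phi_{t\wt u}(x,y,r,\rho),(1-s)x+sy)$, checking injectivity and a lower bound $\abs{\det D\Psi}\geq\frac12$ by an explicit block-matrix computation; the whole non-monotonicity issue for $k$ is absorbed into the fact that $\Phi_{t\wt u}$ itself becomes an integration variable. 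You instead stay intrinsic, use the geodesic Lipschitz estimate and geodesic-polar coordinates $(w,\ell,\nu)$, and reduce the problem to a \emph{one-dimensional} monotone substitution $\ell\mapsto s=\abs{\Phi_{tu}}$; both arguments then close with the same scaling identity $\int_0^\infty s^nk_\eps(s)\,ds=I_k^1/\abs{\bbS^{n-1}}$. Your version trades the paper's determinant computation for the geodesic-polar Jacobian $\asymp\ell^{n-2}$ uniformly in $\theta$ (standard, and consistent with the BBM-type machinery of \autocite{Peg2020} already used in \cref{lem:cvsphunif}), and is arguably more transparent about where $I_k^1$ enters.

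One point you should tighten: the justification ``$\partial_\ell(\abs{\Phi_{tu}}^2)=2\ell(1+O(\ell+t))>0$ for $\ell\in(0,1)$'' is too loose as stated, since an $O(\ell)$ error with an unspecified constant need not be small when $\ell$ is of order one. The clean fix is the exact identity (which the paper derives in \cref{fuglede:eq4a}) $\abs{\Phi_{tu}}^2=t^2(a-b)^2+(1+ta)(1+tb)\abs{x-y}^2$ with $\abs{x-y}=2\sin(\ell/2)$: differentiating in $\ell$ gives a main term $(1+ta)(1+tb)\sin\ell\geq c\ell$ on $(0,1)$ and remainders of size $O(t\ell)$, so monotonicity and the two-sided bounds $s\asymp\ell$, $ds/d\ell\asymp1$ do hold for $t<\ol t_1(n)$ — but only because you restricted to $d(x,y)<1$; near $\ell=\pi$ the main term degenerates and the argument would fail, so that restriction is essential and should be cited at this point.
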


\begin{proof}
Let $\ol{t}_1>0$ to be chosen later. In the proof, we assume that $0<t<\ol{t}_1$.
We work in local coordinates, proceeding in two steps.\\[4pt]
\textit{Step 1.}
Let us denote by $D_r(x)$ the open ball of radius $r$ centered at $x$ in $\R^{n-1}$. We first show
that for any $\wt{u}\in\Lip(D_2)$ such that $\norm{\wt{u}}_{L^\infty(D_2)}+\norm{\grad
\wt{u}}_{L^\infty(D_2)}\leq M$, we have
\begin{equation}\label{techlem1:eqstep1}
\int_{D_2}\int_{D_2}\int_0^1\int_0^1 (\wt{u}(x)-\wt{u}(y))^2
k_\eps(\abs{\Phi_{t\wt{u}}(x,y,r,\rho)})\dr\dd\rho\dx\dy
\leq C\int_{D_2} \abs{\grad \wt{u}}^2\dx,
\end{equation}
for all $\eps>0$, provided $t<\ol{t}_1$, and $\ol{t}_1$ is chosen small enough depending only on
$n$ and $M$, and $C=C(n,I_k^1)$. Here, by a slight abuse of notation, $\Phi_v$ denotes the map from
$\left(\R^{n-1}\right)^2\times(0,1)^2$ whose expression is given by \cref{eq:defPhi}.
For $v\in\Lip(D_2)$, let us define the map $E_v:\left(\R^{n-1}\right)^2\times (0,1)^2$ by
\[
E_v(x,y,r,\rho) \coloneqq \big(rv(x)+(1-r)v(y))\big)x-\big(\rho v(y)+(1-\rho)v(x)\big)y,
\]
so that $\Phi_v(x,y,r,\rho)=x-y+E_v(x,y,r,\rho)$.
Due to the $L^\infty$ bound on $\wt{u}$ and $\grad \wt{u}$, we easily see that the
maps~$E_{t\wt{u}}$ converge uniformly to $0$ on $(D_2)^2\times (0,1)^2$ as $t$ vanishes.
In fact, we have
\begin{equation}\label{techlem1:eq1}
\arraycolsep=1.4pt
\begin{array}{rl}
\abs*{E_{t\wt{u}}(x,y,r,\rho)}
&\leq t\left(2\norm{\wt{u}}_{L^\infty(D_2)}+\norm{\grad\wt{u}}_{L^\infty(D_2)}\right)
\abs{x-y}\\[4pt]
&\leq 2M\ol{t}_1\abs{x-y}
\end{array}
\qquad\forall x,y\in D_2,\,\forall r,\rho\in(0,1).
\end{equation}
In particular, choosing $\ol{t}_1<1/(4M)$,
\begin{equation}\label{techlem1:eq2}
\frac{\abs{x-y}}{2}\leq \abs{\Phi_{t\wt{u}}(x,y,r,\rho)} \leq 2\abs{x-y},\quad\forall x,y\in
D_2,\,\forall r,\rho\in(0,1).
\end{equation}
Integrating on lines, using Cauchy-Schwarz inequality and Fubini's theorem, we have
\begin{equation}\label{techlem1:eq3}
\begin{aligned}
&\int_{D_2}\int_{D_2}\int_0^1\int_0^1 (\wt{u}(x)-\wt{u}(y))^2
k_\eps(\abs{\Phi_{t\wt{u}}(x,y,r,\rho)})\dr\dd\rho\dx\dy\\
&\hphantom{\int_{D_2}\int_{D_2}}
\leq \int_0^1 \int_0^1\int_0^1 \int_{D_2}\int_{D_2} \abs{\grad\wt{u}(x+s(y-x))}^2 \abs{x-y}^2
k_\eps(\abs{\Phi_{t\wt{u}}(x,y,r,\rho)})\dx\dy\dd s\dr\dd\rho.
\end{aligned}
\end{equation}
Now let us focus on the integral on $D_2\times D_2$, fixing $s\in(0,1/2)$ and
$r,\rho\in(0,1)$. By \cref{techlem1:eq2}, our choice of~$\ol{t}_1$, and the fact that
$0<t<\ol{t}_1$, it follows
\begin{equation}\label{techlem1:eq4}
\begin{aligned}
&\int_{D_2}\int_{D_2} \abs{\grad\wt{u}(x+s(y-x))}^2 \abs{x-y}^2
k_\eps(\abs{\Phi_{t\wt{u}}(x,y,r,\rho)})\dx\dy\\
&\hphantom{\int_{D_2}\int_{D_2}}
\leq 4 \int_{D_2}\int_{D_2} \abs{\grad\wt{u}(x+s(y-x))}^2 \abs{\Phi_{t\wt{u}}(x,y,r,\rho)}^2
k_\eps(\abs{\Phi_{t\wt{u}}(x,y,r,\rho)})\dx\dy.
\end{aligned}
\end{equation}
We wish to make the change of variables
\[
\Psi_{t\wt{u},r,\rho,s}(x,y)=(\Phi_{t\wt{u}}(x,y,r,\rho),(1-s)x+sy)\eqqcolon (x',y').
\]
First, let us remark that $\Psi_{t\wt{u},r,\rho,s}$ is an injection whenever
$\ol{t}_1<1/(4M)$. Indeed,
\begin{equation}\label{techlem1:eq5}
\begin{aligned}
\Psi_{t\wt{u},r,\rho,s}(x_1,y_1)=
\Psi_{t\wt{u},r,\rho,s}(x_2,y_2)
&\iff
\begin{cases}
x_1-x_2=y_1-y_2+E_{t\wt{u}}(x_2,y_2)-E_{t\wt{u}}(x_1,y_1)\\
(1-s)(x_1-x_2)=s(y_2-y_1)
\end{cases}\\
&\iff
\begin{cases}
x_1-x_2=s\left[E_{t\wt{u}}(x_2,y_2)-E_{t\wt{u}}(x_1,y_1)\right]\\
(1-s)(x_1-x_2)=s(y_2-y_1).
\end{cases}
\end{aligned}
\end{equation}
We compute
\[
\begin{aligned}
&E_{t\wt{u}}(x_2,y_2)-E_{t\wt{u}}(x_1,y_1)\\
&\hphantom{E_{t\wt{u}}}
=t\Big[\big(r\wt{u}(x_2)+(1-r)\wt{u}(y_2)\big)x_2-\big(r\wt{u}(x_1)+(1-r)\wt{u}(y_2)\big)x_1\Big]\\
&\hphantom{E_{t\wt{u}}}\hphantom{t\Big[\big(r\wt{u}}
-t\Big[\big(\rho \wt{u}(y_2)+(1-\rho)\wt{u}(x_2)\big)y_2-\big(\rho\wt{u}(y_1)
+(1-\rho)\wt{u}(x_1)\big)y_1\Big]\\
&\hphantom{E_{t\wt{u}}}
=t\Big[\big(r\wt{u}(x_2)+(1-r)\wt{u}(y_2)\big)(x_2-x_1)
+\big(r(\wt{u}(x_2)-\wt{u}(x_1))+(1-r)(\wt{u}(y_1)-\wt{u}(y_2))\big)x_1\Big]\\
&\hphantom{E_{t\wt{u}}}\hphantom{t\Big[\big(r\wt{u}}
-t\Big[\big(\rho \wt{u}(y_2)+(1-\rho)\wt{u}(x_2)\big)(y_2-y_1)
+\big(\rho(\wt{u}(y_2)-\wt{u}(y_1))+(1-\rho)(\wt{u}(x_2)-\wt{u}(x_1))\big)y_1\Big],
\end{aligned}
\]
thus, using the inequality $\norm{\wt{u}}_{L^\infty(D_2)}+\norm{\grad\wt{u}}_{L^\infty(D_2)}\leq M$
and the fact that $x_1,y_1\in D_2$, we find
\begin{multline}\label{techlem1:eq6}
\abs*{E_{t\wt{u}}(x_2,y_2)-E_{t\wt{u}}(x_1,y_1)}
\leq t\left(\norm{\wt{u}}_{L^\infty(D_2)}+2\norm{\grad\wt{u}}_{L^\infty(D_2)}\right)
\left(\abs{x_2-x_1}+\abs{y_2-y_1}\right)
\\\leq 2M\ol{t}_1\left(\abs{x_2-x_1}+\abs{y_2-y_1}\right).
\end{multline}
If $(1-s)(x_1-x_2)=s(y_2-y_1)$, \cref{techlem1:eq6} implies
\[
s\abs*{E_{t\wt{u}}(x_2,y_2)-E_{t\wt{u}}(x_1,y_1)}\leq 2M\ol{t}_1\abs{x_2-x_1},
\]
and plugging this into \cref{techlem1:eq5}, we see that $\Psi_{t\wt{u},r,\rho,s}$ is
injective if $\ol{t}_1<1/(4M)$. Then, a simple computation gives
\[
D\Psi_{t\wt{u},r,\rho,s}(x,y)
=\begin{pmatrix}
A(x,y) & -B(x,y)\\
(1-s) I_n & s I_n
\end{pmatrix},
\]
where $I_n$ denotes the identity matrix in $\Ren$, and for almost every $x,y\in D_2$, 
\[
A(x,y)\coloneqq I_n+t\Big[\big(r\wt{u}(x)+(1-r)\wt{u}(y)\big)I_n+\big(rx-(1-\rho)y\big)\otimes
\grad \wt{u}(x)\Big]
\]
and
\[
B(x,y)\coloneqq I_n+t\Big[\big(\rho \wt{u}(y)+(1-\rho)\wt{u}(x)\big)I_n+\big(\rho
y-(1-r)x\big)\otimes \grad \wt{u}(y)\Big].
\]
Since $\det D\Psi_{t\wt{u},r,\rho,s}=\det(sA+(1-s)B)$ and
\[
sA(x,y)+(1-s)B(x,y)
=I_n+t\Big[I_n+s\big(rx-(1-\rho)y\big)\otimes\grad \wt{u}(x)+(1-s)\big((1-r)x-\rho
y\big)\otimes\grad\wt{u}(y)\Big],
\]
we see that, choosing $\ol{t}_1$ even smaller if needed, depending only on $n$ and $M$, we have
\[
\abs{\det D\Psi_{t\wt{u},r,\rho,s}(x,y)}\geq \frac{1}{2}.
\]
Hence, making the change of variables $(x',y')=\Psi_{t\wt{u},r,\rho,s}(x,y)$ in \cref{techlem1:eq4}
yields
\begin{equation}\label{techlem1:eq7}
\begin{aligned}
&\int_{D_2}\int_{D_2} \abs{\grad\wt{u}(x+s(y-x))}^2 \abs{x-y}^2
k_\eps(\abs{\Phi_{t\wt{u}}(x,y,r,\rho)})\dx\dy\\
&\hphantom{\int_{D_2}\int_{D_2}}
\leq 8 \iint_{\Psi_{t\wt{u},r,\rho,s}(D_2\times D_2)} \abs{\grad\wt{u}(y')}^2 \abs{x'}^2
k_\eps(\abs{x'})\dx'\dy'\\
&\hphantom{\int_{D_2}\int_{D_2}}
\leq 8 \left(\int_{D_2} \abs{\grad\wt{u}(y)}^2\dy\right)\left(\int_0^8 t^{n}
k_\eps(t)\dt\right),
\end{aligned}
\end{equation}
where we used the fact that $\Psi_{t\wt{u},r,\rho,s}(D_2\times D_2)\subsq D_8\times D_2$ for
the last inequality, in view of \cref{techlem1:eq2}.
Plugging \cref{techlem1:eq7} into \cref{techlem1:eq3} and making the change of variable $t'=t/\eps$
gives \cref{techlem1:eqstep1}, by the assumptions on $k$, which concludes this step.\\[4pt]
\textit{Step 2.}
We split the domain of integration in the left-hand side of \cref{techlem1:skernel} in
$\left\{x,y\in\bbS^{n-1}~:~ \abs{x-y}>1/4\right\}$ and
$\left\{x,y\in\bbS^{n-1}~:~\abs{x-y}\leq 1/4\right\}$. We first treat the contribution of
distant pairs $(x,y)$.
Choosing $\ol{t}_1$ small enough depending only on $n$, we have
\[
\abs{\Phi_{tu}(x,y,r,\rho)}\geq \frac{\abs{x-y}}{2},\qquad\forall x,y\in\bbS^{n-1},\,\forall
r,\rho\in(0,1),
\]
so that, by the squared triangle inequality and by symmetry,
\begin{equation}\label{techlem2:eq5b}
\begin{aligned}
&\iint\limits_{\substack{\bbS^{n-1}\times\bbS^{n-1}\\\{\abs{x-y}>\frac{1}{4}\}}}
\int_0^1\int_0^1 (u(x)-u(y))^2
k_\eps(\abs{\Phi_{tu}(x,y,r,\rho)})\dr\dd\rho\dH^{n-1}_x\dH^{n-1}_y\\
&\hphantom{\iint\limits_{\substack{\bbS^{n-1}}}}
\leq 2\int_{\bbS^{n-1}} \abs{u(x)}^2
\Bigg(\int\limits_{\substack{\bbS^{n-1}\\\{\abs{x-y}>\frac{1}{4}\}}}
\int_0^1\int_0^1 k_\eps(\abs{\Phi_{tu}(x,y,r,\rho)})\dr\dd\rho\dH^{n-1}_y
\Bigg) \dH_x^{n-1}\\
&\hphantom{\iint\limits_{\substack{\bbS^{n-1}}}}
\leq 2\abs{\bbS^{n-1}} \left(\int_{\bbS^{n-1}} \abs{u}^2\dH^{n-1}\right)\left(\sup_{r>\frac{1}{8}}~
k_\eps(r)\right).
\end{aligned}
\end{equation}
Then, choosing $\ol{\eps}_6=1/(8R_0)$, and using that
$k(r)\leq C_0r^{-(n+1)}$ for all $r\in(R_0,+\infty)$, for any $0<\eps<\ol{\eps}_6$,
\cref{techlem2:eq5b} implies
\begin{equation}\label{techlem2:eq6}
\iint\limits_{\substack{\bbS^{n-1}\times\bbS^{n-1}\\\{\abs{x-y}>\frac{1}{4}\}}}
\int_0^1\int_0^1 (u(x)-u(y))^2
k_\eps(\abs{\Phi_{tu}(x,y,r,\rho)})\dr\dd\rho\dH^{n-1}_x\dH^{n-1}_y
\leq C\int_{\bbS^{n-1}} \abs{u}^2\dH^{n-1},
\end{equation}
for some $C=C(n,C_0)$.\\
There remains to estimate the integral over the domain
$\mathcal{M}\coloneqq\left\{x,y\in\bbS^{n-1}~:~\abs{x-y}<1/4\right\}$. For this, we cover
$\mathcal{M}$ by a finite number $N(n)$ of~$\mathcal{M}_i\coloneqq \bbS^{n-1}_+(x_i)\times
\bbS^{n-1}_+(x_i)$, where for each $i\in\{1,\dotsc,N\}$, $x_i\in\bbS^{n-1}$, and $\bbS^{n-1}_+(x_i)$
denotes the hemisphere with center $x_i$. Using the stereographic projection $\Pi_i$ with
respect to $-x_i$, we map $\bbS^{n-1}_+(x_i)$ to $D_2\subsq \R^{n-1}$. By the changes of
variables $\xi=\Pi_i(x)$, $\zeta=\Pi_i(y)$, setting $\wt{u}_i\coloneqq u\circ\Pi_i^{-1}$, we
have~$\norm{\wt{u}_i}_{L^\infty(D_2)}+\norm{\grad\wt{u}_i}_{L^\infty(D_2)}\leq C(n)$ since
$\norm{u}_{L^\infty(\bbS^{n-1})}\leq 1$ and $\norm{\grad_\tau\, u}_{L^\infty(\bbS^{n-1})}\leq 1$.
Applying Step~1 with $\wt{u}=\wt{u}_i$, for any $\eps>0$, we obtain
\[
\begin{aligned}
&\iint_{\mathcal{M}_i}\int_0^1\int_0^1 (u(x)-u(y))^2
k_\eps(\abs{\Phi_{tu}(x,y,r,\rho)})\dr\dd\rho\dH_x^{n-1}\dH_y^{n-1}\\
&\hphantom{\iint_{\mathcal{M}_i}}
\leq C\iint_{D_2\times D_2}\int_0^1\int_0^1 (\wt{u}_i(\xi)-\wt{u}_i(\zeta))^2
k_\eps(\abs{\Phi_{t\wt{u}_i}(\xi,\zeta,r,\rho)})\dr\dd\rho\dd\xi\dd\zeta\\
&\hphantom{\iint_{\mathcal{M}_i}}
\leq C\int_{D_2} \abs{\grad \wt{u}_i}^2\dx
\leq C\int_{\bbS^{n-1}_+(x_i)} \abs{\grad_\tau\, u}^2\dH^{n-1},
\end{aligned}
\]
whenever $t<\ol{t}_1$, where $\ol{t}_1=\ol{t}_1(n)$ and $C=C(n,I_k^1)$.
Summing these estimates over $i\in\{1,\dotsc,N(n)\}$ and using \cref{techlem2:eq6}, we conclude the
proof.
\end{proof}

\addtocontents{toc}{\protect\setcounter{tocdepth}{1}}

\subsection*{Acknowledgments}

B. Merlet and M. Pegon are partially supported by the Labex CEMPI (ANR-11-LABX-0007-01).

\printbibliography 

\vspace{0.5cm}


\end{document}